\definecolor{dmagenta}{rgb}{.4,.1,.5}
\definecolor{dblue}{rgb}{.0,.0,.5}
\definecolor{mblue}{rgb}{.0,.0,.8}
\definecolor{ddblue}{rgb}{.0,.0,.4}
\definecolor{dred}{rgb}{.6,.0,.0}
\definecolor{dgreen}{rgb}{.0,.5,.0}
\definecolor{Eeom}{rgb}{.0,.0,.5}
\newtheorem{lemma}{Lemma}[section]
\newtheorem{theorem}{Theorem}[section]
\newtheorem{corollary}{Corollary}[section]
\theoremstyle{definition}
\newtheorem{definition}{Definition}[section]
\newtheorem{example}{Example}[section]
\newtheorem{remark}{Remark}[section]
\numberwithin{equation}{section}
\crefname{section}{Section}{Sections}
\crefname{subsection}{Subsection}{Subsections}
\crefname{condition}{Condition}{Conditions}
\crefname{hypothesis}{Hypothesis}{Conditions}
\crefname{assumption}{Assumption}{Assumptions}
\crefname{lemma}{Lemma}{Lemmas}
\crefname{claim}{Claim}{Claims}
\Crefname{figure}{Figure}{Figures}
\DeclareRobustCommand\widecheck[1]{{\mathpalette\@widecheck{#1}}}
\def\@widecheck#1#2{%
    \setbox\z@\hbox{\m@th$#1#2$}%
    \setbox\tw@\hbox{\m@th$#1%
       \widehat{%
          \vrule\@width\z@\@height\ht\z@
          \vrule\@height\z@\@width\wd\z@}$}%
    \dp\tw@-\ht\z@
    \@tempdima\ht\z@ \advance\@tempdima2\ht\tw@ \divide\@tempdima\thr@@
    \setbox\tw@\hbox{%
       \raise\@tempdima\hbox{\scalebox{1}[-1]{\lower\@tempdima\box
\tw@}}}%
    {\ooalign{\box\tw@ \cr \box\z@}}}
\def\subsection{\@startsection{subsection}{0}%
\z@{\linespacing\@plus\linespacing}{\linespacing}%
{\bf}}
\newcommand{\df}{\coloneqq}
\DeclareMathOperator{\Exp}{\mathbb{E}} 
\DeclareMathOperator{\Prob}{\mathbb{P}} 
\newcommand{\D}{\mathrm{d}}          
\newcommand{\RR}{\mathbb{R}}         
\newcommand{\Rd}{{\mathbb{R}^d}}       
\newcommand{\Ind}{\mathds{1}}            
\newcommand{\grad}{\nabla}
\newcommand{\sB}{\mathscr{B}}    
\newcommand{\cC}{\mathcal{C}}     
\newcommand{\sF}{\mathfrak{F}}    
\newcommand{\cG}{\mathcal{G}}
\newcommand{\cK}{\mathcal{K}}
\newcommand{\sR}{\mathscr{R}}
\newcommand{\cX}{\mathcal{X}}    
\newcommand{\cY}{\mathcal{Y}}
\newcommand{\norm}[1]{\lVert#1\rVert}
\DeclareMathOperator{\diam}{diam}
\DeclareMathOperator{\Psidel}{\Psi(--\Delta)}
\begin{document}

\title[Nonlocal logistic equation with harvesting]%
{\sc \textbf{A study of nonlocal spatially heterogeneous logistic equation with harvesting}}

\author{Anup Biswas and Mitesh Modasiya}

\address{
Department of Mathematics, Indian Institute of Science Education and Research, Dr. Homi Bhabha Road,
Pune 411008, India. Email: anup@iiserpune.ac.in; mitesh.modasiya@students.iiserpune.ac.in }

\date{}


\begin{abstract}
We study a class of nonlocal reaction-diffusion equations with a harvesting term where
the nonlocal operator is given by
 a Bernstein function of the Laplacian. In particular, it
includes the fractional Laplacian, fractional relativistic operators, sum of fractional
Laplacians of different order etc. We study existence, uniqueness and multiplicity results of the solutions to the steady state equation. We also consider the parabolic counterpart and establish the long time asymptotic of the solutions. 
Our proof techniques rely on both analytic and probabilistic arguments.
\end{abstract}
\keywords{Bernstein functions of the Laplacian, nonlocal semipositone problems, long time behaviour, nonlocal Fisher-KPP, bifurcation, variable order nonlocal kernel.}
\subjclass[2010]{Primary: 35R11, 35S15, 35K57  Secondary: 35J60, 92D25}

\maketitle

\section{\bf Introduction}
One of the most celebrated reaction-diffusion models was introduced by Fisher \cite{Fish} and Kolmogorov, Petrovsky, and Piskunov \cite{KPP} in 1937 (popularly known as Fisher-KPP model). Since then, it has been widely used to model spatial propagation or spreading of biological species into homogeneous environments
(see books \cite{M93,OL02} for a review). The corresponding equation is given by
$$(\partial_t-\nu\Delta ) u(x, t)= a u (1-\frac{u}{N})\quad \text{in}\;\; D\times (0, T),
\quad u(x, t)=0\quad \text{on}\; \partial D\times [0, T],$$
where $u=u(x, t)$ represents the population density at the
space-time point $(x,t)$,
$\nu$ is the diffusion parameter, $N>0$ is the carrying capacity of the environment.
Imposing the solution to vanish outside the domain $D$ corresponds to a confinement situation, for instance in a hostile environment. Various generalizations
to the above model have been studied both in bounded and unbounded domains.

However, it is recently observed that the heat operator may be too restrictive to describe the spreading of species and for this reason a nonlocal operator may be more useful than a local one, see for instance Berestycki-Coville-Vo \cite{BCV},
Humphries et al. \cite{H10}, Huston et\ al. \cite{HMMV},
Massaccesi-Valdinoci \cite{MV}, Viswanathan et\ al. \cite{V96}. On the other hand,
starting from the seminal work of Caffarelli-Silvestre \cite{CS09} the theory of fractional Laplacian has significantly expanded in many directions and there is a large existing literature for this operator. 
The fractional Laplacian operators have been extensively used for 
mathematical modelling, for instance anomalous diffusion \cite{CR13,V12},
crystal dislocation \cite{DFV}, water waves \cite{BV16}. However, there are
other types of nonlocal operators that are also of importance. For instance,
relativistic operators appearing in quantum mechanics \cite{A17,FF14},
sum of fractional Laplacians of different order appearing in the modelling of
acoustic wave propagation in attenuating media \cite{ZH14}. This calls for
consideration of a general family of L\'evy operators (including the above mentioned nonlocal operators) for which a unified theory
can be developed. This motivates us to study positive solutions to
the following nonlocal logistic equation
\begin{equation*}
\begin{split}
\Psidel u &= au- f(x,u)- c h(x,u) \quad \text{in}\; D\,,
\\
u&=0 \quad \text{in}\; D^c\,,
\end{split}
\end{equation*}
where $a, c\in\RR$, $h$ represents the harvesting term
and $\Psi(-\Delta)$  denotes the generator of a subordinate Brownian motion and the subordinator is unique determined by its Laplace exponent
$\Psi$. For more details about $\Psidel$ please see \cref{S-Psi}. One of our main
goals in this article is to study existence and multiplicity of solutions for different values of $a$ and $c$. For $\Psidel=-\Delta$ similar problems have been
studied widely in literature (cf. \cite{CHS,CMS,CDT,DS06,GT09,KS,OSS,SS03}). But
for nonlocal situation there are only few results and 
to the best of our knowledge, all of them consider
the case $\Psidel=(-\Delta)^{\nicefrac{\alpha}{2}}$, the fractional Laplacian
(cf. \cite{BRR,CR13,CGH,MM20,RS14b}). Our results not only generalizes the existing works but also introduces several new methods. 
Recently, there have been quite a few works studying pde involving $\Psidel$ (cf. \cite{BL17a,BL17b,B18a,BS20,BGPR,KL21,KL20,KKL16,KKLL}).
 We also mention the recent work 
Biswas-L\H{o}rinczi \cite{BL19} where several maximum principles and generalized
eigenvalue problems for $\Psidel$ have been studied. Our novelty in this work also comes from the study of the long time asymptotic of the parabolic pde
\begin{equation*}
\begin{split}
(\partial_t-\Psidel) u + a u - f(x, u) &=\, 0 \quad \text{in}\; D\times [0, T),
\\
u(x, T)\,=\, u_0(x)\; \; \text{and}\; u(x, t)&=\, 0\quad \text{in}\;  D^c\times[0, T].
\end{split}
\end{equation*}
We use several potential theoretic tools to establish this long time behaviour.

Before we conclude this section let also also mention another type of nonlocal
kernel, known dispersal nonlocal kernel, widely used 
to model nonlocal reaction-diffusion equations ( cf. \cite{BCV,CDLL,GMR,HMMV} and
references therein). It should be noted that dispersal nonlocal kernels are quite different from the nonlocal kernels of $\Psidel$ and therefore, the proof techniques involved in these models are different from ours.

\subsection{A quick introduction to $\Psidel$}\label{S-Psi}
The class of non-local operators we would be interested in are generators of a large family of L\'evy processes,
known as subordinate Brownian motions. These processes are obtained by a time change of a Brownian motion  by independent 
subordinators.
In this section we briefly recall the essentials of the subordinate process which will be particularly used in this article.

A Bernstein function is a non-negative completely monotone function, that is, an element of the set
$$
\mathcal B = \left\{f \in \cC^\infty((0,\infty)): \, f \geq 0 \;\; \mbox{and} \:\; (-1)^n\frac{\D^n f}{\D x^n} \leq 0,
\; \mbox{for all $n \in \mathbb N$}\right\}.
$$
In particular, Bernstein functions are increasing and concave. We will consider the following subset
$$
{\mathcal B}_0 = \left\{f \in \mathcal B: \, \lim_{x\downarrow 0} f(x) = 0 \right\}.
$$
 For a detailed discussion of Bernstein functions we refer to the monograph \cite{SSV}.
Bernstein functions are closely related to subordinators. Recall that a subordinator $\{S_t\}_{t\geq 0}$
is a one-dimensional, non-decreasing L\'evy
process defined
on some
probability space $(\Omega_S, {\sF}_S, \mathbb P_S)$ .
 The Laplace transform of a subordinator is given by a
Bernstein function, i.e.,
\begin{equation*}
\label{lapla}
\Exp_{\mathbb P_S} [e^{-xS_t}] = e^{-t\Psi(x)}, \quad t, x \geq 0,
\end{equation*}
 where $\Psi \in {\mathcal B}_0$. In particular, there is a bijection between the set of subordinators on a given
probability space and Bernstein functions with vanishing right limits at zero.

Let $ B$ be an $\RR^d$-valued Brownian motion on the Wiener space 
$(\Omega_W,{\sF}_W, \mathbb P_W)$, running twice
as fast as standard $d$-dimensional Brownian motion, and let $ {S}$ be an independent subordinator with characteristic
exponent $\Psi$. The random process
$$
\Omega_W \times \Omega_S \ni (\omega_1,\omega_2) \mapsto B_{S_t(\omega_2)}(\omega_1) \in \Rd\,,
$$
is called subordinate Brownian motion under $ {S}$. For simplicity, we will denote a subordinate Brownian motion
by $ \{X_t\}_{t\geq 0}$, its probability measure for the process starting at $x \in \RR^d$ by $\mathbb P_x$, and expectation with respect
to this measure by $\Exp_x$. Note that the characteristic exponent of a pure jump process $\{X_t\}_{t\geq 0}$  is given
by
\begin{equation*}
\Psi(|z|^2)= \int_{\RR^d\setminus\{0\}} (1-\cos(y\cdot z)) j(|y|) \, \D{y},
\end{equation*}
where the L\'evy measure of $\{X_t\}_{t\geq 0}$ has a density $y\mapsto j(|y|)$, $j:(0, \infty)\to (0, \infty)$, with respect to
the Lebesgue measure, given by
\begin{equation}\label{E2.3}
j(r) = \int_0^\infty (4\pi t)^{-d/2} e^{-\frac{r^2}{4t}} \, \mathfrak m (\D{t}),
\end{equation}
where $\mathfrak m$ is the unique measure on $(0, \infty)$ satisfying \cite[Theorem~3.2]{SSV}
$$\Psi(s)=\int_{(0, \infty)} (1-e^{- s t}) \mathfrak m (\D{t}).$$
In particular, we have
$$\int_{\Rd} (|y|^2\wedge 1)\, j(|y|)\, \D{y}<\infty.$$
In this article we impose the following \textit{weak scaling condition} on the subordinators.
\begin{equation}\label{A1}\tag{A1}
\text{There are $0<\upkappa_1\leq \upkappa_2<1\leq b_1$ such that}\quad \frac{1}{b_1}\Big(\frac{R}{r}\Big)^{\upkappa_1}\leq \frac{\Psi(R)}{\Psi(r)}\leq b_1\Big(\frac{R}{r}\Big)^{\upkappa_2}\quad\text{for $1\leq r\leq R<\infty$},
\end{equation}
and,
\begin{equation}\label{A2}\tag{A2}
\text{there is $b_2>1$ such that}\quad j(r)\leq b_2\,j(r+1)\quad\text{for $r\geq 1$.}
\end{equation}
There is large family of subordinators that satisfy \eqref{A1} (see \cite{BL17b,KKLL}).
Moreover, any complete Bernstein function (see \cite[Definition~6.1]{SSV}) satisfying \eqref{A1} also satisfies 
\eqref{A2} (\cite[Theorem~13.3.5]{KSV}, \cite{KSV-12b}). The conditions \eqref{A1}-\eqref{A2} are imposed 
throughout this article without any further mention. It is also helpful to keep in mind that
for any $c>0$ we have
$$j(r)\asymp\frac{\Psi(r^{-2})}{r^d}\quad \text{for}\; 0< r<c\,,$$
where the comparison constants might depend on $c$ and whenever \eqref{A1} holds for all $R\geq r>0$
then we may take $c=\infty$  (see \cite{BGR14b}).

\begin{example}\label{Eg1.1}
Some important examples of complete Bernstein functions $\Psi$ satisfying \eqref{A1} are given by
\begin{itemize}
\item[(i)]
$\Psi(x)=x^{\alpha/2}, \, \alpha\in(0, 2]$, with ${\upkappa_1}=\upkappa_2 = \frac{\alpha}{2}$;
\item[(ii)]
$\Psi(x)=(x+m^{2/\alpha})^{\alpha/2}-m$, $m> 0$, $\alpha\in (0, 2)$, with 
${\upkappa_1}=\upkappa_2 =\frac{\alpha}{2}$;
\item[(iii)]
$\Psi(x)=x^{\alpha/2} + x^{\beta/2}, \, \alpha, \beta \in(0, 2]$, with ${\upkappa_1} = \frac{\alpha}{2}
\wedge \frac{\beta}{2}$,  and $\upkappa_2 = \frac{\alpha}{2} \vee \frac{\beta}{2}$;
\item[(iv)]
$\Psi(x)=x^{\alpha/2}(\log(1+x))^{-\beta/2}$, $\alpha \in (0,2]$, $\beta \in [0,\alpha)$ with
${\upkappa_1}=\frac{\alpha-\beta}{2}$ and $\upkappa_2=\frac{\alpha}{2}$;
\item[(v)]
$\Psi(x)=x^{\alpha/2}(\log(1+x))^{\beta/2}$, $\alpha \in (0,2)$, $\beta \in (0, 2-\alpha)$, with
${\upkappa_1}=\frac{\alpha}{2}$ and $\upkappa_2=\frac{\alpha+\beta}{2}$.
\end{itemize}
Corresponding to the examples above, the related processes are
(i) $\frac{\alpha}{2}$-stable subordinator, (ii) relativistic $\frac{\alpha}{2}$-stable subordinator, (iii) sums
of independent subordinators of different indices, etc.
\end{example}
The operator $-\Psidel$ is defined by
\begin{align}\label{opt}
-\Psidel f(x) &= \frac{1}{2}\int_{\Rd}\left(f(x+y)+f(x-y)-2f(x)\right)j(|y|)\,\D{y}
\\
&= \int_{\Rd} (f(x+y)-f(x)-\Ind_{\{|y|\leq 1\}} y\cdot \grad f(x)) j(|y|)\,\D{y}, \nonumber
\end{align}
which is classically defined for $f\in\cC^2_b(\Rd)$.
Here $\cC^2_b(\Rd)$ denotes the space of all bounded continuous function in $\Rd$ that are twice continuously differentiable.
 Also, $-\Psidel$ is the generator of the
strong Markov process $\{X_t\}_{t\geq 0}$ we introduced above. In connection to the examples above, the
related $-\Psidel$ operators are (i) $\frac{\alpha}{2}$-fractional Laplacian, (ii) $\frac{\alpha}{2}$-relativistic operator, (iii) sum of fractional Laplacians etc.

\subsection{Problem and main results}
Let $D\subset\Rd$ be a bounded $\cC^{1,1}$ domain. For positive constants $a, c$ we consider the 
following nonlocal logistic equation with a harvesting term
\begin{equation}\label{Prob}
\begin{split}
\Psidel u &= au- f(x,u)- c h(x,u) \quad \text{in}\; D\,,
\\
u &>0 \quad \text{in}\; D\,,
\\
u&=0 \quad \text{in}\; D^c\,,
\end{split}
\end{equation}
where $f:\bar{D}\times [0, \infty)\to [0, \infty), h:\bar{D}\times[0, \infty)\to [0, \infty)$ are given continuous functions satisfying
\begin{equation}\tag{A3}\label{A3}
\begin{gathered}
s\mapsto f(x, s),\, h(x, s)\; \mbox{are continuously differentiable}, f(x, 0)=f_s(x,0)=0,
\\
\frac{\D}{\D{s}}\left[\frac{f(x, s)}{s}\right]>0\; \mbox{for $s>0$},\; \lim_{s\to\infty} 
\inf_{x\in D}\frac{f(x,s)}{s}=\infty\,,\\
\mbox{and}\; h\; \mbox{is bounded with}\; \max_{\bar D} h(x, 0)>0\,.
\end{gathered}
\end{equation}
A typical example for $f$ is $b(x) u^2$ where $b$ in a positive continuous function.
 By a solution of \eqref{Prob} we mean viscosity solution. For a definition and regularity properties 
of viscosity solutions see \cref{S-prelim} below. As well known, existence of solutions to \eqref{Prob} is
closely connected with the principal eigenvalue of the operator $-\Psidel$. It is also known  that there 
are only countably many eigenvalues $0<\lambda_1<\lambda_2\leq\lambda_3\to\infty$
satisfying (see \cite{BL17a})
\begin{equation*}
-\Psidel\varphi_n + \lambda_n\varphi_n=0\quad \text{in}\; D, \quad \text{and}\quad \varphi_n=0\quad 
\text{in}\; D^c.
\end{equation*}
The first eigenvalue $\lambda_1$ is simple and $\varphi_1>0$ in $D$.
The principal eigenvalue $\lambda_1$ also satisfies a Berestycki-Nirenbarg-Varadhan \cite{BNV} type
characterization, that is, 
\begin{equation}\label{Eigen}
\lambda_1=\sup\{\lambda\; :\;\exists\;\psi\in\cC_{b, +}(D) \; \mbox{such that}\; -\Psidel\psi
+\lambda\psi\leq 0 \; \mbox{in}\; D\},
\end{equation}
where $\cC_{b, +}(D)$ denotes the collection of all bounded, non-negative continuous functions on $\Rd$ that
are positive inside $D$. Before we state our fist main result we recall the notion of stability for a solution
$u$ to the boundary value problem
\begin{equation}\label{E1.4}
\begin{split}
-\Psidel u + g(x, u) &=0 \quad \text{in}\; D\,,
\\
u&=0 \quad \text{in}\; D^c\,.
\end{split}
\end{equation}
A solution $u$ of \eqref{E1.4} is said to be a {\it stable solution} if the Dirichlet principal eigenvalue of the operator $-\Psidel + g_s(x, u)$ is positive, otherwise we say $u$ is an {\it unstable solution}.
Our first result is about the logistic equation (i.e., $h=0$)
\begin{theorem}\label{T1.1}
The logistic equation
\begin{equation}\label{E-Lg}
\begin{split}
\Psidel u &= au- f(x,u) \quad \text{in}\; D\,,
\\
u &>0 \quad \text{in}\; D\,,
\\
u&=0 \quad \text{in}\; D^c\,,
\end{split}
\end{equation}
has no solution for $a\leq \lambda_1$ and has exactly one solution $v_a$ for $a>\lambda_1$. Furthermore, 
the function $(\lambda_1, \infty)\ni a\mapsto v_a$ is continuous, increasing and $v_a$ is stable.
\end{theorem}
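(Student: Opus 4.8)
\medskip

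The plan is to treat nonexistence, existence, uniqueness, monotonicity/continuity, and stability in turn, using the sub/supersolution method adapted to the viscosity framework for $\Psidel$ together with the eigenvalue characterization \eqref{Eigen}.

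\emph{Nonexistence for $a\le\lambda_1$.} Suppose $u>0$ solves \eqref{E-Lg}. Writing $g(x,s)=-as+f(x,s)$ we have $-\Psidel u + g(x,u)=0$. Since $f(x,s)\ge 0$ and $f(x,0)=0$ with $\frac{\D}{\D s}[f(x,s)/s]>0$, we get $f(x,u)/u>0$, hence $-\Psidel u + (a - f(x,u)/u)u = 0$ shows $-\Psidel u + \tilde\lambda u\le 0$ in $D$ with $\tilde\lambda = a - \inf_D f(x,u)/u < a \le \lambda_1$ — more carefully, one tests against $\varphi_1$ (or uses the characterization \eqref{Eigen} with $\psi=u$, noting $u\in\cC_{b,+}(D)$): the inequality $-\Psidel u + a u \le f(x,u) $ combined with $f\ge 0$ does not immediately help, so instead I would pair the equation with $\varphi_1$ via the (anti)symmetry of $\Psidel$. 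Integrating $\varphi_1\Psidel u - u\Psidel\varphi_1 = 0$ over $D$ (justified by the self-adjointness of the Dirichlet operator, cf.\ \cite{BL17a}) yields $\int_D(a u - f(x,u))\varphi_1 = \lambda_1\int_D u\varphi_1$, i.e. $(a-\lambda_1)\int_D u\varphi_1 = \int_D f(x,u)\varphi_1 \ge 0$. If $a<\lambda_1$ this is a contradiction since the left side is negative and the right side nonnegative; if $a=\lambda_1$ it forces $f(x,u)\equiv 0$ on $D$, contradicting $f(x,s)/s>0$ for $s>0$ together with $u>0$. Hence no solution exists for $a\le\lambda_1$.

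\emph{Existence for $a>\lambda_1$.} I would construct an ordered pair of sub- and supersolutions. For the subsolution, take $\underline u = \varepsilon\varphi_1$ with $\varepsilon>0$ small: then $-\Psidel\underline u = -\lambda_1\underline u$, and we need $-\Psidel\underline u \le a\underline u - f(x,\underline u)$, i.e. $-\lambda_1\varepsilon\varphi_1 \le a\varepsilon\varphi_1 - f(x,\varepsilon\varphi_1)$, which holds once $(a-\lambda_1)\varphi_1 \ge f(x,\varepsilon\varphi_1)/\varepsilon$; since $f(x,s)/s\to 0$ as $s\downarrow 0$ (because $f_s(x,0)=0$) this is valid for $\varepsilon$ small, uniformly on $\bar D$. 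For the supersolution, use a large constant $M$: since $\inf_{x\in D} f(x,s)/s\to\infty$, for $M$ large we have $aM - f(x,M) \le 0 = -\Psidel M$ in $D$ (and $M\ge 0 = \underline u$ in $D^c$), so $\bar u\equiv M$ is a supersolution dominating $\underline u$ for $\varepsilon$ small. The monotone iteration scheme for $\Psidel$ (available in the viscosity setting via comparison principle and regularity theory quoted in \cref{S-prelim}, with $g(x,\cdot)$ made monotone by adding a large multiple of $u$) produces a solution $v_a$ with $\underline u \le v_a \le \bar u$; strict positivity in $D$ follows from the strong maximum principle for $\Psidel$ (cf.\ \cite{BL19}).

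\emph{Uniqueness and monotonicity.} For uniqueness I would use the strict concavity/sublinearity structure: if $u_1,u_2$ are two solutions, the standard argument pairs the equations against $u_2$ and $u_1$ respectively and exploits strict monotonicity of $s\mapsto f(x,s)/s$ (a Brezis–Oswald / Diaz–Saa type argument). Concretely, from $\int_D(\Psidel u_1)u_2 - (\Psidel u_2)u_1 = 0$ one obtains $\int_D\big(\frac{f(x,u_1)}{u_1} - \frac{f(x,u_2)}{u_2}\big)u_1 u_2 = 0$, and strict monotonicity forces $u_1=u_2$ a.e., hence everywhere by continuity. (One must first check $v_a$ is bounded above and below by multiples of $\varphi_1$ — an upper bound from the constant supersolution, a lower Hopf-type bound from \cite{BL19} — to legitimize these pairings; this interior-boundary control is where most of the technical work sits.) Monotonicity of $a\mapsto v_a$: if $a<a'$ then $v_a$ is a strict subsolution of the $a'$-equation and any large constant remains a supersolution above $v_a$, so iterating from $v_a$ yields a solution of the $a'$-problem lying above $v_a$; by uniqueness this is $v_{a'}$, so $v_a\le v_{a'}$, and the strong maximum principle upgrades this to strict ordering in $D$. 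Continuity in $a$ follows from monotonicity together with uniform $L^\infty$ and Hölder bounds (from the regularity theory for $\Psidel$) which give compactness: any sequential limit of $v_{a_n}$ as $a_n\to a$ solves the $a$-problem and hence equals $v_a$; monotone convergence from both sides then gives continuity.

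\emph{Stability.} Finally, $v_a$ is stable: the linearization is $-\Psidel + g_s(x,v_a)$ with $g_s(x,v_a) = -a + f_s(x,v_a)$. Since $s\mapsto f(x,s)/s$ is strictly increasing, $f_s(x,v_a) > f(x,v_a)/v_a$ for $v_a>0$; combined with the equation $-a + f(x,v_a)/v_a = \Psidel v_a / v_a$ interpreted via the eigenvalue characterization — more precisely, $v_a$ itself satisfies $-\Psidel v_a + (-a + f(x,v_a)/v_a)v_a = 0$, so $-a + f(x,v_a)/v_a$ has "principal eigenvalue $0$" against the positive function $v_a$ — the strict inequality $f_s(x,v_a) > f(x,v_a)/v_a$ shifts the potential up pointwise, and strict monotonicity of the principal eigenvalue in the zeroth-order term gives that the principal eigenvalue of $-\Psidel + g_s(x,v_a)$ is strictly positive. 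Hence $v_a$ is a stable solution.

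\emph{Main obstacle.} The crux is not the soft sub/supersolution scheme but the technical bookkeeping needed to run the Brezis–Oswald pairing in the viscosity/nonlocal setting: establishing two-sided bounds $c_1\varphi_1 \le v_a \le c_2\varphi_1$ (or at least enough integrability of $f(x,v_a)/v_a$ against $v_a\varphi_1$ near $\partial D$) so that the integration-by-parts identities $\int_D (\Psidel u)w - (\Psidel w)u = 0$ are rigorously justified for $u,w\in\{v_a,\varphi_1\}$. This requires invoking the boundary Harnack / Hopf lemma and interior regularity for $\Psidel$ from \cite{BL19,BL17a} and checking the nonlocal "tails" contribute negligibly — the weak scaling \eqref{A1}–\eqref{A2} is exactly what makes these estimates available.
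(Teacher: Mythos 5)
The existence step (sub/supersolution $\underline u = \varepsilon\varphi_1$, $\bar u = M$, plus monotone iteration) coincides with the paper's. But your nonexistence, uniqueness, and stability arguments rest on integral identities — ``integrating $\varphi_1\Psidel u - u\Psidel\varphi_1 = 0$ over $D$'' and a Brezis--Oswald pairing for uniqueness — and this is exactly the approach the paper deliberately avoids. The authors write after Theorem~\ref{T1.1} that they ``work in the framework of viscosity solution and therefore, the standard variational technique (as used in [BRR,OSS,CGH]) does not work here.'' You flag the justification of the pairing as ``where most of the technical work sits,'' but you do not supply it, and the self-adjointness/integration-by-parts step for a viscosity solution of a general $\Psidel$ operator (rather than the pure fractional Laplacian) is precisely the non-trivial step. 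It can in principle be repaired by passing to the Green-operator formulation $u = \cG(au - f(\cdot,u))$, $\varphi_1 = \lambda_1^{-1}\cG\varphi_1$ and using symmetry of $G^D$, but that is a substantive rewrite, not the argument as written; moreover it is tied to self-adjointness, whereas the paper emphasizes its route works for non-self-adjoint operators too.

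The paper's actual route is entirely pointwise. Nonexistence for $a<\lambda_1$ follows from the refined maximum principle (Theorem~\ref{T2.3}, first part): $-\Psidel v + a v = \tfrac{f(x,v)}{v}v \ge 0$ with $\lambda(a) = \lambda_1 - a > 0$ forces $v\le 0$. For $a = \lambda_1$ the second (Krein--Rutman type) part of Theorem~\ref{T2.3} forces $v = t\varphi_1$, and substitution gives $f(\cdot,t\varphi_1)\equiv 0$, a contradiction. Uniqueness and monotonicity come from Lemma~\ref{L3.1}, a sliding/Hopf argument: one sets $\varrho = \sup\{t: tu < v\}$, uses the boundary Hopf estimate \eqref{ET2.2A} and the boundary regularity \eqref{ET2.1B} to get $\varrho>0$, then exploits strict decrease of $g(x,s)/s$ and Hopf again to push $\varrho\ge 1$ — no pairing or quotient trick is needed. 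For stability, the paper does not invoke monotonicity of the principal eigenvalue in the potential (which is what your argument needs and which you'd have to prove separately from Theorem~\ref{T2.3}); instead it perturbs $w=(1+h)v_a$, uses superadditivity $(1+h)f(x,s) < f(x,(1+h)s)$ to get $-\Psidel w + aw - f(\cdot,w)\le 0$, divides by $h$, lets $h\to 0$ to obtain $-\Psidel v_a + (a-f_s(\cdot,v_a))v_a \le 0$ directly (so $\lambda^*\ge 0$ by \eqref{E2.8}), and rules out $\lambda^*=0$ via the simplicity statement in Theorem~\ref{T2.3} together with $sf_s(x,s) > f(x,s)$. Your eigenvalue-monotonicity route can also be made to work, but you would need to state and prove that strict monotonicity as a lemma; as written it is asserted, not established.
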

When $\Psidel=-\Delta$, \cref{T1.1} is well known. See for instance
Oruganti, Shi and Shivaji \cite[Theorem~2.5]{OSS}. For $\Psidel=(-\Delta)^{\nicefrac{\alpha}{2}}$
(i.e., the fractional Laplacian), similar result (without stability analysis of solutions)
 is obtained recently by Marinelli-Mugani 
\cite[Proposition~4.2]{MM20} using a variational technique
(see also Chhetri-Girg-Hollifield
\cite[Theorem~2.8]{CGH}). We also refer to the work of Berestycki, Roquejoffre and Rossi \cite[Theorem~1.2]{BRR}
which establishes a similar result for the fractional Laplacian for a periodic patch model in $\Rd$.
 We not only obtain uniqueness of solutions but also establish the result for 
a large class of L\'evy operators. It should also be noted that we work in the framework of 
viscosity solution and therefore, the standard variational technique (as used in \cite{BRR,OSS,CGH}) does
not work here. Also, our approach is quite robust in the sense that it can also be applied to 
non-translation invariant operators and non self-adjoint operators.

Next we consider the harvesting term $h$ and study existence of positive solutions. Note that we allow 
$h$ to depend on $u$. One such popular example is the predation function $h(x, s)=\frac{s}{1+s}$, although our approach does not cover this particular function.
The case $h(x, s)=h(x)$ is known as constant yield harvesting. Letting $F(x, u)= a u-f(x, u)-ch(x, u)$
in \eqref{Prob} we see that $F(x, 0)\leq 0$. Such problems are known as semipositone problems, see \cite{CHS,CMS,DS06,SS03} and references therein. When $\Psidel=-\Delta$, existence and multiplicity
of solutions to \eqref{Prob} have been widely studied; see for instance, Korman-Shi \cite{KS},
Oruganti-Shi-Shivaji \cite{OSS}, Costa-Dr\'{a}bek-Tehrani \cite{CDT}, Gir\~{a}o-Tehrani \cite{GT09}
and references therein.
We obtain the following bifurcation  result for equation \eqref{Prob}.
\begin{theorem}\label{T1.2}
Suppose that $a>\lambda_1$ and  $\inf_{s\in [0, K]} h(\cdot, s)\gneq 0$ in $D$ for every $K>0$.
Then the following hold.
\begin{itemize}
\item[(i)] There exists a positive constant $c_\circ$ such that \eqref{Prob} has a maximal
solution $u_1(x, c)$ for $c<c_\circ$.
\item[(ii)] There is no solution for $c>c_\circ$.
\item[(iii)] There exist positive $\delta, \tilde{c}$ such that for every $a\in (\lambda_1, \lambda_1+\delta)$
there exists a solution $u_2(x, c)$ to \eqref{Prob} for each $c\in (0,\tilde{c})$ and $u_2\lneq u_1$. Furthermore,
$\lim_{c\to 0+}\norm{u_2(\cdot, c)}_{\cC(D)}=0$.
\item[(iv)] There exists $\widehat{c}\in (0, \tilde{c})$ so that for any $a\in (\lambda_1, \lambda_1+\delta)$,
$u_1, u_2$ are the only solutions to \eqref{Prob} for $0<c\leq\widehat{c}$ .
\end{itemize}
\end{theorem}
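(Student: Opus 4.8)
The plan is to treat the maximal branch (parts (i)--(ii)) by a perturbation argument off the logistic solution $v_a$ together with comparison and monotone iteration, and the small branch together with the exact count (parts (iii)--(iv)) by a Lyapunov--Schmidt reduction near the degenerate triple $(a,c,u)=(\lambda_1,0,0)$. I first record two a priori facts. For any solution $u$ of \eqref{Prob}, the inequality $\Psidel u=au-f(x,u)-ch(x,u)\le au-f(x,u)$ together with the comparison principle for $\Psidel$ and \cref{T1.1} give $0<u\le v_a$ in $D$, so $\|u\|_\infty\le\|v_a\|_\infty$. Testing \eqref{Prob} against $\varphi_1$ and using the self-adjointness of $\Psidel$ under the exterior Dirichlet condition (legitimate as the viscosity solutions here are regular enough) gives $(a-\lambda_1)\int_D u\varphi_1=\int_D f(x,u)\varphi_1+c\int_D h(x,u)\varphi_1$; as $f\ge0$, $u\le v_a$, and $h(x,u)\ge\inf_{s\in[0,\|v_a\|_\infty]}h(x,s)\gneq0$ in $D$, this bounds $c$ from above, so $c_\circ:=\sup\{c>0:\eqref{Prob}\text{ solvable}\}<\infty$, which is (ii).

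For (i), I would first obtain a solution for some small $c>0$ by perturbing off $v_a$: since $v_a$ is stable, $-\Psidel+a-f_s(\cdot,v_a)$ has positive principal eigenvalue and is invertible, so the implicit function theorem in the Hölder-type space adapted to $\Psidel$ (in which $(\Psidel)^{-1}$ sends bounded data to functions comparable to $\varphi_1$ up to $\partial D$) yields a branch $c\mapsto u(\cdot,c)$ with $u(\cdot,0)=v_a$ solving \eqref{Prob} for small $c>0$, positivity up to $\partial D$ being preserved by a boundary Harnack comparison with $\varphi_1$. The set of solvable $c$ is then an interval: if $u_{c_1}$ solves \eqref{Prob} and $0<c_2<c_1$, then $\Psidel u_{c_1}=au_{c_1}-f(\cdot,u_{c_1})-c_1h(\cdot,u_{c_1})\le au_{c_1}-f(\cdot,u_{c_1})-c_2h(\cdot,u_{c_1})$, so $u_{c_1}$ is a subsolution of the $c_2$-problem lying below the supersolution $v_a$, and the monotone iteration from $v_a$ between this ordered pair converges to the maximal solution $u_1(\cdot,c_2)$. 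Thus \eqref{Prob} is solvable for every $c\in(0,c_\circ)$ with a maximal solution $u_1(\cdot,c)$, decreasing in $c$, whose linearized principal eigenvalue is $\ge0$ (a decreasing limit of supersolutions) and $>0$ for $c<c_\circ$ (else $u_1$ could be continued past $c_\circ$); this is (i).

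For (iii)--(iv) I would restrict to $a$ near $\lambda_1$. By \cref{T1.1} and the bifurcation-from-zero analysis of the pure logistic equation, $\|v_a\|_\infty\to0$ as $a\downarrow\lambda_1$; with $u\le v_a$, \emph{every} solution of \eqref{Prob} then lies in a fixed small ball about $0$ once $a\in(\lambda_1,\lambda_1+\delta)$ and $c$ is small. On that ball run a Lyapunov--Schmidt reduction: writing $u=t(\varphi_1+w)$ with $\int_D\varphi_1^2=1$, $\int_D w\varphi_1=0$, the projection of \eqref{Prob} onto $\varphi_1^\perp$ determines $w=w(t,a,c)$ with $\|w\|\to0$ (using that $-\Psidel+\lambda_1$ is invertible on $\varphi_1^\perp$, again via the $\Psidel$ regularity theory), and substituting into the $\varphi_1$-projection reduces \eqref{Prob}, after dividing by $t$, to the scalar equation $a-\lambda_1=\Xi_c(t):=\Theta(t)+c\,c_h/t+o(1)$, where $\Theta(t)=\int_D\frac{f(x,t\varphi_1)}{t\varphi_1}\varphi_1^2$ is strictly increasing with $\Theta(0^+)=0$, $\Theta(\infty)=\infty$ by \eqref{A3}, and $c_h=\int_D h(x,0)\varphi_1>0$. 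Since $\Xi_c$ blows up at $0$ and $\infty$ with $\min_{t>0}\Xi_c(t)\to0$ as $c\to0$, the equation $a-\lambda_1=\Xi_c(t)$ has exactly two roots $0<t_-(c)<t_+(c)$ for $c$ below a threshold and none above, with $t_-(c)\to0$ as $c\to0$ and $t_+(c)$ bounded away from $0$; the first produces a solution $u_2(\cdot,c)$ with $\|u_2(\cdot,c)\|_{\cC(D)}\to0$, the second is the maximal solution $u_1$. As $u_1$ is maximal, $u_2\le u_1$, and since $t_-\ne t_+$ they are distinct, so the strong maximum principle gives $u_2\lneq u_1$, proving (iii); and since every solution is small and hence captured by the reduced equation, its having exactly two roots for small $c$ is precisely (iv).

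The main obstacle is running this otherwise classical machinery in the nonlocal, viscosity-solution setting. This needs the sharp boundary regularity and Schauder-type estimates for $\Psidel$ on $\cC^{1,1}$ domains — solutions comparable near $\partial D$ to the natural boundary profile (the analogue of $\dist(\cdot,\partial D)^{\alpha/2}$ for the fractional Laplacian, here represented by $\varphi_1$), and $(\Psidel)^{-1}$ bounded between the corresponding weighted Hölder spaces — so that the relevant linearizations are Fredholm and the implicit-function and Lyapunov--Schmidt arguments go through; a boundary Harnack principle for $\Psidel$ to keep perturbed solutions strictly positive up to $\partial D$ and to control ratios such as $v_a/\varphi_1$; and a careful accounting of stability for viscosity solutions, namely that the maximal solution has positive linearized principal eigenvalue for $c<c_\circ$ and that along the reduced branch the upper root $t_+$ is stable while $t_-$ is not. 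With these in hand, what remains is the routine part of the sub/supersolution method and of a one-dimensional bifurcation analysis.
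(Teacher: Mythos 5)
Your outline is mathematically sensible but takes a genuinely different route from the paper in several places, and in the crucial parts (iii)--(iv) it leaves real gaps.

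\textbf{Parts (i)--(ii).} Your nonexistence argument tests \eqref{Prob} against $\varphi_1$ and uses self-adjointness of $\Psidel$. The paper instead divides \eqref{Prob} by $c_n\to\infty$, uses the a priori bound $u_n\le K$, and passes to the limit in the viscosity sense to force $\min_s h\equiv 0$; this deliberately avoids the $L^2$ pairing because the authors want an argument that also works for non-self-adjoint $\Psidel$ (they say so explicitly in the introduction). Your testing argument is valid for the subordinate-Brownian case at hand once you justify that the viscosity/Green-function solution is regular enough to be paired against $\varphi_1$, but it does buy you less generality. For (i), you perturb off $v_a$ via the implicit function theorem using stability of $v_a$; the paper instead builds an explicit subsolution $\phi=m(\varphi_1-\varepsilon\cG 1)$ (\cref{L3.3}). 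Both routes work; yours is arguably cleaner once the weighted H\"older framework for $(\Psidel)^{-1}$ is in place, which the paper does set up.

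\textbf{Part (iii).} Here the approaches genuinely diverge. You do a Lyapunov--Schmidt reduction near the degenerate point $(a,c,u)=(\lambda_1,0,0)$, where $-\Psidel+\lambda_1$ has a one-dimensional kernel, and reduce to the scalar equation $a-\lambda_1=\Xi_c(t)$. The paper does something quite different: it fixes $a\in(\lambda_1,\lambda_1+\delta)$ (in particular $a$ is \emph{not} an eigenvalue, so $w\mapsto\cG(aw)-w$ is invertible by Fredholm), applies the Crandall--Rabinowitz implicit function theorem directly to $F(c,u)=\cG(au-f-ch)-u$ at $(0,0)$ to get a small branch $z(c)$, and then proves $z(c)>0$ by an \emph{anti-maximum principle} (\cref{T2.4}): the rescaled solution $z(c)/c$ converges to $W$ solving $-\Psidel W+aW=h(x,0)\gneq 0$, and for $a$ slightly above $\lambda_1$ the anti-maximum principle forces $W/V(\delta_D)$ to be bounded away from zero, whence $z(c)>0$. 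In your reduction the positivity of $u_2=t_-(\varphi_1+w)$ is never argued: you need $\|w/\varphi_1\|_{L^\infty}<1$ uniformly, which is precisely a weighted boundary estimate of the type the paper obtains via \cref{T2.1}, but you only flag this in your closing remarks rather than actually securing it.

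\textbf{Part (iv).} This is where the gap is most serious. You assert that $a-\lambda_1=\Xi_c(t)$ has \emph{exactly} two roots for small $c$, deducing this from the facts that $\Xi_c$ blows up at $0$ and $\infty$ and that $\min_t\Xi_c(t)\to0$. Those facts only give at-least-two roots; the count ``exactly two'' requires $\Xi_c$ to be unimodal, which is not automatic under \eqref{A3}: $\Theta$ is strictly increasing but need not be convex, and the $o(1)$ correction depends on $(t,a,c)$ in a way you have not controlled in $C^1$. (It can plausibly be salvaged for $c$ small by showing the unique critical point of $\Xi_c$ migrates to $t=0$ and there $\Xi_c$ is asymptotically convex, but you would need the second $s$-derivative of $f(x,s)/s$ at $s=0$, which \eqref{A3} does not provide.) The paper sidesteps this entirely: it argues by contradiction, extracting from a hypothetical third family of solutions a subsequence converging either to $v_a$ or to $0$; in the first case the normalized difference $\xi^{n_k}=(u_1^{n_k}-v^{n_k})/\|u_1^{n_k}-v^{n_k}\|$ converges to a positive solution of the linearization at $v_a$, contradicting stability of $v_a$ proved in \cref{T1.1}; in the second case the normalized difference converges to a nontrivial solution of $\Psidel\xi=a\xi$, contradicting that $a$ is not an eigenvalue. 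This avoids any root-counting of a scalar bifurcation function. You should either supply the missing convexity/unimodality argument for $\Xi_c$ or switch to a compactness-plus-linearization contradiction as in the paper.

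In short: your framework is coherent, and parts (i)--(ii) can be completed along your lines; but for (iii) you must actually establish positivity of the small branch (the paper's anti-maximum principle is a ready-made tool for exactly this), and for (iv) the claimed exact-two-roots count of the reduced equation is an unproved assertion that needs either additional hypotheses on $f$ or a different uniqueness mechanism.
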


\begin{remark}
The condition $\inf_{s\in [0, K]} h(\cdot, s)\gneq 0$ is used to prove nonexistence of solution for
large values of $c$. This condition does not have any influence on \cref{T1.2}(iii)
and (iv).
\end{remark}

The above result should be compared with \cite[Theorem~3.2 and 3.3]{OSS} which establish a similar result
for $\Psidel =-\Delta$ and $h(x, u)=h(x)$. To our best knowledge, there are no similar existing results for
nonlocal operators.
For the fractional Laplacian operators only existence of a solution is obtained for $c>0$ and
$a>\lambda_1$ in \cite[Theorem~2.9]{CGH}. The main idea in obtaining \cref{T1.2}(iii) is to apply
the implicit function theorem of Crandall and Rabinowitz \cite{CR71}. In case of the Laplacian
this is applied on the forward operator \cite[Theorem~3.3]{OSS}. But the same method can not applied
for nonlocal operators due to lack of appropriate Schauder estimates. We instead consider the inverse operator
(see \eqref{E2.3} below) and establish appropriate estimates so that the implicit function theorem can be applied.

As a corollary to the proof of \cref{T1.2} we get the following uniqueness result which generalizes \cite[Theorem~3.4]{OSS}. In the following result $V$ denotes the potential 
measure function of ladder-height process corresponding to $\{X^1_t\}$ (see \cref{S-prelim}).

\begin{corollary}\label{C1.1}
Suppose that
\begin{equation}\label{EC1.1A}
\sup_{s\in[0, k]}\sup_D\Bigl|\frac{h(x,s)}{V(\delta_D(x))}\Bigr|\,<\, \infty,
\end{equation}
for every finite $k$. Then for every $a>2\lambda_1$, there exists a $\breve{c}\in (0, c_\circ)$ so that for 
every $c\in (0, \breve{c})$, there exists a unique solution $u$ to \eqref{Prob} satisfying
\begin{equation}\label{EC1.1B}
\lambda_1 \, u(x)\geq c\, h(x, u(x)),\quad x\in \Rd.
\end{equation}
\end{corollary}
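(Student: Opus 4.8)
The plan is to recast the problem of finding a solution of \eqref{Prob} satisfying \eqref{EC1.1B} as a fixed point problem for the \emph{harvesting potential} $q_u(x)\df c\,h(x,u(x))/u(x)$, $x\in D$, and to solve it by a contraction argument. If $u$ solves \eqref{Prob} and satisfies \eqref{EC1.1B}, then $q_u\geq0$ and \eqref{EC1.1B} says exactly $q_u\leq\lambda_1$ on $D$; moreover solutions of \eqref{Prob} are comparable with $V(\delta_D(\cdot))$ up to $\partial D$ (regularity theory of \cref{S-prelim}) while \eqref{EC1.1A} gives $h(x,u(x))\lesssim V(\delta_D(x))$, so $q_u$ is a bounded function on $D$. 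Rewriting the equation as $(\Psidel+q_u)u=au-f(x,u)$ in $D$, $u=0$ in $D^c$, we see $u$ is the unique positive solution of the logistic problem for the operator $\Psidel+q_u$: since $\lambda_1[\Psidel+q_u]\leq\lambda_1[\Psidel+\lambda_1]=2\lambda_1<a$ by monotonicity of the principal eigenvalue in the zeroth order term (a consequence of \eqref{Eigen}) together with a shift, \cref{T1.1} — whose proof carries over to $\Psidel+q$ for bounded $q\geq0$, by the robustness noted after its statement — applies to $\Psidel+q_u$ and yields existence, uniqueness and $u\asymp V(\delta_D)$. Hence $u\mapsto q_u$ and $q\mapsto u_q$ (the solution just described) are mutually inverse bijections between the solutions of \eqref{Prob} satisfying \eqref{EC1.1B} and the fixed points, in $\mathcal{Q}\df\{q\in L^\infty(D):0\leq q\leq\lambda_1\}$, of the map
\[
\mathcal{T}(q)(x)\df c\,h(x,u_q(x))/u_q(x),\qquad x\in D.
\]

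The core of the proof is then to show that $\mathcal{T}$ is a contraction of $\mathcal{Q}$ into itself once $c$ is small. By comparison, $q\mapsto u_q$ is order reversing, so $v_{a-\lambda_1}\leq u_q\leq v_a$ for every $q\in\mathcal{Q}$ and therefore $u_q\asymp V(\delta_D)$ with constants independent of $q$; combined with \eqref{EC1.1A} this gives $0\leq\mathcal{T}(q)\leq cK$ on $D$ for a constant $K$ depending only on the data, hence $\mathcal{T}(\mathcal{Q})\subset\mathcal{Q}$ as soon as $cK\leq\lambda_1$. For the Lipschitz estimate, with $\norm{w}_\ast\df\norm{w/V(\delta_D)}_{L^\infty(D)}$, I would write
\[
\mathcal{T}(q_1)-\mathcal{T}(q_2)=c\,h(\cdot,u_{q_1})\Bigl(\tfrac{1}{u_{q_1}}-\tfrac{1}{u_{q_2}}\Bigr)+\tfrac{c}{u_{q_2}}\bigl(h(\cdot,u_{q_1})-h(\cdot,u_{q_2})\bigr),
\]
bound the right side by $c\,C\,\norm{u_{q_1}-u_{q_2}}_\ast$ using \eqref{EC1.1A}, $u_{q_i}\asymp V(\delta_D)$ and the $C^1$-regularity of $h(x,\cdot)$, and then establish $\norm{u_{q_1}-u_{q_2}}_\ast\leq C\norm{q_1-q_2}_{L^\infty}$. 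For the latter I would use the inverse (Green) operator $\mathcal{G}$ of $\Psidel$ on $D$ and the mapping estimates developed for \cref{T1.2}: from $u_{q_i}=\mathcal{G}\bigl[(a-q_i)u_{q_i}-f(\cdot,u_{q_i})\bigr]$ one gets $u_{q_1}-u_{q_2}=\mathcal{G}\bigl[(a-q_1)(u_{q_1}-u_{q_2})-(q_1-q_2)u_{q_2}-(f(\cdot,u_{q_1})-f(\cdot,u_{q_2}))\bigr]$, and absorbing the terms carrying $u_{q_1}-u_{q_2}$ — legitimate because $a$ stays a fixed distance below $\lambda_1[\Psidel+q_1]$ and because $\frac{\D}{\D s}[f(x,s)/s]>0$ by \eqref{A3}, exactly as in the proof of \cref{T1.1} — leaves $\norm{u_{q_1}-u_{q_2}}_\ast\leq C\norm{q_1-q_2}_{L^\infty}\norm{u_{q_2}}_\ast$. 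Choosing $\breve c\in(0,c_\circ)$ so small that $\mathcal{T}\colon\mathcal{Q}\to\mathcal{Q}$ has Lipschitz constant $<1$, the Banach fixed point theorem produces a unique $q^\ast\in\mathcal{Q}$; then $u\df u_{q^\ast}$ solves \eqref{Prob} (since $q^\ast u=\mathcal{T}(q^\ast)u=c\,h(\cdot,u)$) and satisfies \eqref{EC1.1B} (since $q^\ast\leq\lambda_1$), and by the bijection it is the only such solution. Comparing with \cref{T1.2}, this $u$ must be the maximal solution $u_1(\cdot,c)$.

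The step carrying the real content is the uniform weighted Lipschitz bound $\norm{u_{q_1}-u_{q_2}}_\ast\leq C\norm{q_1-q_2}_{L^\infty}$ with $C$ independent of $q_1,q_2\in\mathcal{Q}$; here one must make sure the two-sided bound $u_q\asymp V(\delta_D)$ does not degenerate as $q$ ranges over $\mathcal{Q}$ (this is where the uniform trapping between the $\Psidel$- and $(\Psidel+\lambda_1)$-logistic solutions is used) and that the absorption of the $u_{q_1}-u_{q_2}$ terms is carried out in $\norm{\cdot}_\ast$ — which is exactly where the hypothesis $a>2\lambda_1$ enters quantitatively, since it provides the uniform spectral gap $a-\lambda_1[\Psidel+q]\geq a-2\lambda_1>0$ that makes the linearized equation for $u_{q_1}-u_{q_2}$ uniformly solvable. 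A secondary technical point is that $\mathcal{T}$ need not preserve continuity up to $\partial D$, so the argument should be run in $L^\infty(D)$ and the logistic theory for $\Psidel+q$ with merely bounded $q$ invoked through its Green-operator formulation rather than in the viscosity framework directly.
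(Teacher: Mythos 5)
Your proposal takes a genuinely different route from the paper, and the idea of recasting the problem as a fixed-point problem for the harvesting potential $q_u = c\,h(\cdot,u)/u$ is elegant. The paper instead argues directly: for existence it takes the maximal solution $u_1(\cdot,c)$ from \cref{T1.2}(i), shows via \eqref{ET3.2B}, \cref{T2.1,T2.2} that $u_1(\cdot,c)/V(\delta_D)\to v_a/V(\delta_D)$ uniformly with $\inf_D v_a/V(\delta_D)>0$, and concludes from \eqref{EC1.1A} that $\lambda_1 u_1\geq ch(\cdot,u_1)$ once $c$ is small; for uniqueness it shows that any solution $w(c)$ satisfying \eqref{EC1.1B} cannot have $\sup_D w(c)\to 0$ as $c\to 0$ — this is precisely where $a>2\lambda_1$ enters, through the manipulation $-\Psidel w+(\lambda_1+\eta)w\leq 0$ with $\eta=(a-2\lambda_1)/2$, which contradicts \eqref{Eigen} — so $w(c)\to v_a$ and the argument of \cref{T1.2}(iv)(a), resting on the stability of $v_a$, forces $w(c)=u_1(\cdot,c)$.

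There is, however, a genuine gap in your contraction estimate, and it concerns the step you yourself flag as ``carrying the real content.'' The absorption of the $u_{q_1}-u_{q_2}$ terms requires the \emph{linearized} operator $\Psidel+q_1-a+f_s(\cdot,\xi)$ (with $\xi$ between $u_{q_1}$ and $u_{q_2}$) to have positive Dirichlet principal eigenvalue, uniformly over $\mathcal{Q}$. You justify this with ``the uniform spectral gap $a-\lambda_1[\Psidel+q]\geq a-2\lambda_1>0$,'' but that gap concerns the \emph{linear} operator $\Psidel+q-a$ without the $f_s$ correction, and it points the wrong way: since $a>\lambda_1[\Psidel+q]$ (also note your parenthetical ``$a$ stays a fixed distance \emph{below} $\lambda_1[\Psidel+q_1]$'' has the inequality reversed), the operator $\Psidel+q-a$ has \emph{negative} principal eigenvalue, so a naive Neumann-series absorption via $\cG$ fails. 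What rescues invertibility is precisely the stability of $u_q$ (i.e.\ $\lambda_1[\Psidel+q+f_s(\cdot,u_q)-a]>0$), which is a qualitative fact proved in \cref{T1.1} by excluding equality through $s f_s(x,s)>f(x,s)$; that argument does not produce a quantitative lower bound, let alone one uniform over all $q\in\mathcal{Q}=\{0\leq q\leq\lambda_1\}$. Without a uniform stability gap your Lipschitz constant $C$ is not controlled, and the contraction may fail. (A plausible repair would be to restrict the iteration to $\mathcal{Q}_c=\{0\leq q\leq cK\}$ so that all $u_q$ stay uniformly close to $v_a$ as $c\to 0$, inheriting its stability gap by perturbation — but that is a different and more delicate argument than the one you write, and it needs continuity of the principal eigenvalue in the $L^\infty$ potential.) A secondary, lesser gap: you invoke \cref{T1.1} for $\Psidel+q$ with merely $q\in L^\infty(D)$, and appeal to the paper's remark on robustness; but the proof of \cref{T1.1} (and of the Hopf and comparison lemmas it relies on) is carried out in the viscosity framework with continuous data, and extending it to bounded measurable potentials through the Green-operator formulation is not done anywhere in the paper and would have to be supplied.
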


Next we discuss the long time behaviour of the parabolic nonlocal equation. Consider the terminal value problem
\begin{equation}\label{para}
\begin{split}
(\partial_t-\Psidel) u + a u - f(x, u) &=\, 0 \quad \text{in}\; D\times [0, T),
\\[2mm]
u(x, T)\,=\, u_0(x)\; \; \text{and}\; u(x, t)&=\, 0\quad \text{in}\;  D^c\times[0, T].
\end{split}
\end{equation}
By a solution of \eqref{para} we mean a potential theoretic solution. More precisely, we say
$u\in \cC(\Rd\times [0, T])$ is a solution to 
\begin{equation}\label{E1.6}
\begin{split}
(\partial_t-\Psidel) u + \ell(x, t) &=\, 0 \quad \text{in}\; D\times [0, T),
\\[2mm]
u(x, T)\,=\, g(x)\; \; \text{and}\; u(x, t) &=\, 0\quad \text{in}\;  D^c\times[0, T],
\end{split}
\end{equation}
if
\begin{equation}\label{E1.7}
u(x, t)=\Exp_x[g(X_{(T-t)\wedge\uptau})] + \Exp_x\left[\int_0^{(T-t)\wedge\uptau} \ell(X_s, t+s) ds\right],
\quad (x, t)\in D\times[0, T]\,,
\end{equation}
where $\uptau$ denotes the first exit time of $X$ from $D$.
It can be shown that potential theoretic solutions are same as viscosity solution
of \eqref{E1.6} (see \cref{L4.1} below). The benefit of working with \eqref{E1.7} is that it allows us to
make use of the underlying probabilistic structure of the model. Our next main result is the following

\begin{theorem}\label{T1.3}
Let $u_T$ be the positive and bounded solutions of \eqref{para} in $[0, T]$. Then the following hold.
\begin{itemize}
\item[(a)] For $a> \lambda_1$, we have $\lim_{T\to\infty} u_T(x, 0)\to v_a$, uniformly in $D$,
where $v_a$ is the unique solution of \cref{E-Lg}.
\item[(b)] For $a\leq \lambda_1$, we have $\lim_{T\to\infty} u_T(x, 0)\to 0$, uniformly in $D$.
\end{itemize}
\end{theorem}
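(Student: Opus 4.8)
The plan is to treat the terminal-value problem \eqref{para} as a time-reversed Cauchy problem and extract monotonicity in $T$ from the probabilistic representation \eqref{E1.7}. First I would set $w_T(x,s) \df u_T(x, T-s)$, so that $w_T$ solves a \emph{forward} initial-value problem on $[0,T]$ with $w_T(\cdot,0)=u_0$, and the Feynman--Kac formula \eqref{E1.7} reads $w_T(x,s)=\Exp_x[u_0(X_{s\wedge\uptau})] - \Exp_x[\int_0^{s\wedge\uptau}(a w_T - f(X_r,w_T))\,\D r]$. The key structural fact is a comparison principle for \eqref{E1.6}: if $\bar u, \underline u$ are a supersolution and subsolution with ordered terminal/exterior data, then $\underline u \le \bar u$; by \cref{L4.1} this follows from either the viscosity-solution machinery of \cref{S-prelim} or directly from \eqref{E1.7} via a Gronwall/iteration argument using that $s\mapsto f(x,s)$ is Lipschitz on bounded sets (from \eqref{A3}). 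Granting this, I would first show that the stationary solution $v_a$ (resp.\ $0$ when $a\le\lambda_1$) is itself a solution of \eqref{para} for every $T$, hence by uniqueness of the parabolic problem the value $u_T(x,0)$ is squeezed between the evolutions started from $\min(u_0, v_a)$ and $\max(u_0, v_a)$; this reduces matters to studying two monotone orbits. Using the semigroup property of $X$ one checks that $T\mapsto u_T(x,0)$ is monotone (increasing if $u_0 \le v_a$, decreasing if $u_0\ge v_a$, after the reduction), so the limit $u_\infty(x)\df\lim_{T\to\infty}u_T(x,0)$ exists pointwise and, by the interior regularity/equicontinuity estimates for $\Psidel$ quoted in \cref{S-prelim}, the convergence upgrades to uniform on $\bar D$ and $u_\infty$ is a bounded solution of the stationary equation \eqref{E-Lg} (or is identically $0$).

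For part (a), once $u_\infty$ is known to be a nonnegative bounded solution of $\Psidel u = au - f(x,u)$ in $D$ with $u=0$ on $D^c$, \cref{T1.1} forces $u_\infty = v_a$ \emph{provided} $u_\infty>0$ in $D$; so the crux is to rule out $u_\infty\equiv 0$. Here I would use the spectral gap: since $a>\lambda_1$, fix a small $\varepsilon>0$ with $a-\varepsilon>\lambda_1$ and note that $\varepsilon\varphi_1$ (with $\varphi_1$ the principal eigenfunction, suitably scaled down) is a subsolution of \eqref{E-Lg} because $f_s(x,0)=0$ makes the reaction term $\ge (a-o(1))u$ near $u=0$; then the parabolic comparison principle and the monotone orbit started from a small multiple of $\varphi_1$ (dominated below by $u_0$ after using that $u_0>0$, or by inserting an intermediate time) gives $\liminf_T u_T(x,0)\ge \varepsilon\varphi_1(x)>0$. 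This pins down $u_\infty=v_a$. For part (b), when $a\le\lambda_1$ I would instead produce a \emph{supersolution} decaying to $0$: the function $t\mapsto M\E^{(\lambda_1-a)(\cdot)}\varphi_1$ (or, when $a=\lambda_1$, a linear-in-reciprocal-time barrier built from $\varphi_1$ together with the strict sign of $f$, exploiting $\frac{\D}{\D s}[f(x,s)/s]>0$) dominates $u_T$ from above by comparison, and since $f\ge 0$ the solution stays nonnegative; letting $T\to\infty$ squeezes $u_T(x,0)$ to $0$.

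The main obstacle I anticipate is the borderline case $a=\lambda_1$ in part (b): the linear barrier $\E^{(\lambda_1-a)t}\varphi_1$ no longer decays, so one must exploit the superlinearity of $f$ quantitatively. The idea is that along the (decreasing, after reduction to $u_0\ge$ the relevant orbit, or directly bounding by a large constant supersolution) orbit, the ODE comparison $\dot y \le (\lambda_1-a)y - c_0 y^{1+\gamma}$-type inequality — obtained by projecting onto $\varphi_1$ using $-\Psidel\varphi_1 = -\lambda_1\varphi_1$ and a Poincaré/Jensen estimate relating $\int f(x,u)\varphi_1$ to $(\int u\varphi_1)^{1+\gamma}$, valid because $\inf_x f(x,s)/s\to\infty$ — yields polynomial decay of $\int_D u_T(\cdot,0)\varphi_1\,\D x$, hence (by the Harnack-type interior estimates and boundary decay against $V(\delta_D)$ from \cref{S-prelim}) uniform decay of $u_T(\cdot,0)$ to $0$. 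The rest is bookkeeping: verifying the comparison principle in the viscosity framework, checking the equicontinuity needed to pass from pointwise to uniform limits, and confirming that the constructed sub/supersolutions satisfy the exterior condition $u=0$ on $D^c$.
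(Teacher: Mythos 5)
Your high-level strategy -- squeeze $u_T(\cdot,0)$ between two monotone parabolic orbits converging to a stationary solution, then invoke \cref{T1.1} -- is the same as the paper's, and you correctly identify the parabolic comparison principle (paper's \cref{L4.3}) and the viscosity/potential-theoretic equivalence (\cref{L4.1}) as the scaffolding. However, there is a genuine gap in the middle of your argument: the orbits started from $\min(u_0,v_a)$ and $\max(u_0,v_a)$ are \emph{not} monotone in $T$ in general. Monotonicity of $T\mapsto u_T(x,0)$ requires the terminal datum to be a stationary sub- or supersolution of \eqref{E-Lg}, not merely to be ordered against $v_a$; the $\min$ of a solution and an arbitrary nonnegative function is not a subsolution (and the $\max$ is not a supersolution), so the ``immediate monotone drift'' you rely on need not hold. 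Likewise, your lower barrier $\varepsilon\varphi_1$ must be dominated by the terminal datum in order to push the orbit up, but $\varphi_1\asymp V(\delta_D)$ near $\partial D$, whereas $u_0\in\cC_0(D)$ can decay arbitrarily faster than $V(\delta_D)$, so no $\varepsilon>0$ works without further preparation. You flag this yourself with the phrase ``or by inserting an intermediate time,'' and that caveat is exactly where the paper does real work: its Step~1 uses the semigroup property to replace $u_0$ by $u_T(\cdot,T-1)$ and then invokes the two-sided parabolic boundary estimate \cref{L4.6} to get $C^{-1}V(\delta_D)\le u_0\le C V(\delta_D)$. This is the step that makes $\kappa^{-1}v_a\le u_0\le \kappa v_a$ possible, and one then checks (using $\frac{\D}{\D s}[f(x,s)/s]>0$) that $\kappa^{\pm1}v_a$ are genuine stationary super/subsolutions, which is what gives the monotone orbits. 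That renormalization is not optional bookkeeping; it is the core technical ingredient.

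For part (b) your exponential barrier $M\E^{(a-\lambda_1)(T-t)}\varphi_1$ is clean for $a<\lambda_1$ (indeed arguably cleaner than the paper's route there, once the boundary renormalization is in place so that $M\varphi_1$ dominates the datum at $t=T$), but you correctly notice it stalls at $a=\lambda_1$. The nonlinear Jensen/Poincar\'e ODE projection you sketch for that borderline case is considerably more delicate than what is needed: the paper handles $a\le\lambda_1$ uniformly by running a monotone orbit from the stationary supersolution $\kappa\varphi_1$ (here one only needs $(a-\lambda_1)\kappa\varphi_1 - f(x,\kappa\varphi_1)\le 0$, which holds because $a\le\lambda_1$ and $f\ge 0$), letting $T\to\infty$, and concluding from \cref{T1.1} that the limiting nonnegative bounded stationary solution must be identically zero. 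So for (b) you have the right kind of barrier but are missing the observation that the nonexistence half of \cref{T1.1} already closes the $a=\lambda_1$ case without any quantitative decay rate.
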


To the best of our knowledge, there are no available results similar to \cref{T1.3}
in nonlocal setting. However, there are quite a few works on the fractional 
Fisher-KPP equation in $\Rd$; see for instance, Berestycki-Roquejoffre-Rossi \cite{BRR},
Cabr\'e-Roquejoffre \cite{CR13},
L\'eculier \cite{AL19} and references therein. For nonlocal dispersal
operators in $\Rd$ large time behaviour has been studied by 
Berestycki-Coville-Vo \cite{BCV}, Cao-Du-Li-Li \cite{CDLL},
 Su-Li-Lou-Yang \cite{SLLY} and references therein. The method used in these works are not applicable for our model. Since our nonlocal operator is quite general in nature there are no existing parabolic pde estimate (other than fractional Laplacian) that can be used to obtain our result. So we rely on the heat-kernel estimates of 
the underlying stochastic process $X$, and hence the reason to use probabilistic representation of the solution. 

The rest of the article is organized as follows: In \cref{S-prelim} we introduce
the relation between viscosity solution and the Green function representation.
We also gather few known results in this section which is used later in our proofs.
\cref{T1.1,T1.2} are proved in \cref{S-T1.1} whereas \cref{S-T1.3} contains the proof
of \cref{T1.3}.

\section{Preliminaries}\label{S-prelim}
In this section we recall the notion of nonlocal viscosity solutions, introduced by Caffarelli and Silvestre
\cite{CS09}, and its connection with potential theory. We also gather few results which will later be used 
to prove our main results. Denote by $\cC^2_b(x)$ the space of all bounded continuous functions in $\Rd$
that are twice continuously differentiable in some neighbourhood around $x$.

\begin{definition}\label{D2.1}
A function $u:\Rd\to\RR$, upper-semicontinuous in $\bar{D}$, is said to be a viscosity
subsolution of 
$\Psidel u = f$ in $D$,
if for every $x\in D$ and a test function $\xi\in\cC^2_b(x)$ satisfying $\xi(x)=u(x)$ and
$\xi(y)> u(y)$ for $y\in\Rd\setminus\{x\}$ we have $\Psidel \xi(x)\leq f(x)$.

We say $u$ is a viscosity super-solution of $\Psidel u = f$, 
if $-u$ is a viscosity subsolution of $\Psidel u = -f$ in $D$.
 Furthermore, $u$ is said to be a viscosity solution if it is both a viscosity sub- and super-solution.
\end{definition}
The viscosity solution of 
\begin{equation}\label{E2.1}
\Psidel u = f\quad \text{in}\; D,\quad \text{and}\quad u=0\quad \text{in}\; D^c\,,
\end{equation}
can be represented using Green function and this representation is going to play a key role 
in this article. We need few notations to introduce this representation. 
Let $\uptau$ be the first exit time of $X$ from $D$ i.e.,
$$\uptau=\inf\{t>0 \;: \; X_t\notin D\}.$$
We define the killed process $\{X^D_t\}$ by 
$$X^D_t=X_t \quad \text{if}\; t<\uptau, \quad \text{and}\quad X^D_t=\partial \quad \text{if}\; t\geq \uptau,$$
where $\partial$ denotes a cemetery point. $X^D_t$ has transition density $p_D(t, x, y)$ and its 
transition semigroup $\{P^D_t\}_{t\geq 0}$ is given by
\begin{equation}\label{E2.2}
P^D_t f(x) = \Exp_x[f(X_t)\Ind_{\{t<\uptau\}}]=\int_D f(y) p_D(t, x, y)\, \D{y}.
\end{equation}
The Green function of $X^D$ is defined by 
$$G^D(x, y)=\int_0^\infty p_D(t, x, y)\, \D{t}\,.$$
Then the solution of \eqref{E2.1} can be represented as (see \cite[Section~3.1]{B18},\cite{KKLL})
\begin{equation}\label{E2.3}
u(x)=\cG f(x) \df\int_D G^D(x, y) f(y) \, \D{y}=\Exp_x\left[\int_0^{\uptau} f(X_t) \, \D{t}\right],
\end{equation}
where the last equality follows from \eqref{E2.2}. 

For some of our proofs below we will use some information on the normalized ascending ladder-height process of
$\{X^1_t\}_{t\geq 0}$, 
where $X^{1}_t$ denotes the first coordinate of $X_t$. Recall that the ascending ladder-height
process of a L\'evy process $\{Z_t\}_{t\geq 0}$
 is the process of the right inverse $\{Z_{L^{-1}_t}\}_{t\geq 0}$, where $L_t$
is the local time of $Z_t$ reflected at its supremum (for details and further information we refer to 
\cite[Chapter~ 6]{B}).
Also, we note that the ladder-height process of $\{X^1_t\}_{t\geq 0}$ is a subordinator with Laplace exponent
$$
\tilde\Psi(x)=\exp\left(\frac{1}{\pi}\int_0^\infty \frac{\log \Psi(x^2y^2)}{1+y^2}\, \D{y}\right), \quad x \geq 0.
$$
Consider the potential measure $V(x)$ of this process on the half-line $(-\infty, x)$. Its Laplace transform is
given by
$$
\int_0^\infty V(x) e^{-sx}\, \D{x}= \frac{1}{s\tilde\Psi(s)}, \quad s > 0.
$$
It is also known that $V=0$ for $x\leq 0$, the function $V$ is continuous and strictly increasing in $(0, \infty)$
and $V(\infty)=\infty$ (see \cite{F74} for more details). As shown in \cite[Lemma~1.2]{BGR14} and 
\cite[Corollary~3]{BGR14b},
there exists a constant $C = C(d)$ such that
\begin{equation}\label{E2.4}
\frac{1}{C}\,{\Psi(r^{-2})}\leq \frac{1}{V^2(r)}\leq C\, {\Psi(r^{-2})}, \quad r>0.
\end{equation}
This function $V$ will appear in several places of this article. Let us recall the following 
up to the boundary regularity result from \cite[Theorem~1.1 and 1.2]{KKLL}

\begin{theorem}\label{T2.1}
Assume \eqref{A1}-\eqref{A2} and $f\in\cC(D)$. Let $u$ be the solution of \eqref{E2.1}.
Then for some constant $C$, dependent on $d, D, \Psi$,
we have
\begin{equation}\label{ET2.1A}
\norm{u}_{C^{\phi}(D)}\,\leq \, C \norm{f}_{L^\infty(D)},
\end{equation}
where $\phi=\Psi(r^{-2})^{-\frac{1}{2}}$ and
$$\norm{u}_{C^{\phi}(D)}\df \norm{u}_{\cC(D)} + \sup_{x, y\in D,x\neq y}\frac{|u(x)-u(y)}{\phi(|x-y|)}.$$
Furthermore, there exists $\alpha$, dependent on $d, D, \Psi$, satisfying
\begin{equation}\label{ET2.1B}
\Big|\Big|\frac{u}{V(\delta_D)}\Big|\Big|_{C^{\alpha}(D)}\,\leq \, C \norm{f}_{L^\infty(D)},
\end{equation}
where $\delta_D$ denotes the distance function from $\partial D$.
\end{theorem}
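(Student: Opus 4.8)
The plan is to establish the two bounds \eqref{ET2.1A} and \eqref{ET2.1B} together, from three ingredients: a pointwise decay estimate $\abs{u(x)}\le C\norm{f}_{L^\infty(D)}\,V(\delta_D(x))$ coming from a barrier; a \emph{scale-invariant} interior H\"older estimate for $\Psidel$; and an oscillation-decay iteration on dyadic rings around $\partial D$. By linearity we may assume $\norm{f}_{L^\infty(D)}\le 1$, and all constants below may depend on $d,D,\Psi$.

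\emph{Barrier and $L^\infty$ bound.} Let $w(x)\df\Exp_x[\uptau]$ be the torsion function, which by \eqref{E2.3} solves $\Psidel w=1$ in $D$, vanishes on $D^c$, and satisfies the two-sided estimate $w\asymp V(\delta_D)$ known for bounded $\cC^{1,1}$ domains under \eqref{A1}-\eqref{A2} (this uses \eqref{E2.4} and the Green-function bounds of \cite{BGR14,BGR14b}). Since $\abs{f}\le 1$, the functions $w\pm u$ satisfy $\Psidel(w\pm u)=1\pm f\ge 0$ in $D$ and vanish on $D^c$, so the comparison principle for $\Psidel$ gives $w\pm u\ge 0$, i.e., $\abs{u(x)}\le w(x)\le C\,V(\delta_D(x))$ for $x\in D$, and in particular $\norm{u}_{L^\infty(\Rd)}\le C$. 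We also record that $V(\delta_D)$ is itself an approximate solution, $\abs{\Psidel(V(\delta_D))}\le C$ in $D$: because $D$ obeys uniform interior and exterior ball conditions, monotonicity of $r\mapsto j(r)$ together with \eqref{A1} reduce this to the model computations $\Psidel(V(\delta_H))=0$ in a half-space $H$ and $\Psidel(V(\delta_B))$ bounded near $\partial B$ for a ball $B$.

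\emph{Interior regularity (the crux).} Fix $x_0\in D$ and set $\rho=\tfrac12\delta_D(x_0)$. Rescaling $v(z)=u(x_0+\rho z)$ for $z\in B_1$ (and normalizing the operator to unit order) turns \eqref{E2.1} into an equation $\cL_\rho v=f_\rho$ in $B_1$ with $\norm{f_\rho}_{L^\infty(B_1)}\lesssim\phi(\rho)^2$, where $\cL_\rho$ is a nonlocal operator whose L\'evy kernel is, up to comparability, of the form $\Psi_\rho(\abs{\cdot}^{-2})/\abs{\cdot}^{d}$ for a Bernstein function $\Psi_\rho$ that again satisfies \eqref{A1}-\eqref{A2} with the \emph{same} exponents $\upkappa_1,\upkappa_2$ and structural constants $b_1,b_2$, uniformly for $0<\rho\le\tfrac12\diam D$ --- because \eqref{A1} governs $\Psi$ at large argument, which is exactly the regime the rescaled kernel probes near the origin. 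Applying to this family the interior (Schauder-type) H\"older estimate for unimodal L\'evy operators --- with constants depending only on the structural data --- and feeding in the bound $\abs{u}\lesssim V(\delta_D)$ from the barrier step to control the local supremum $\sup_{B_\rho(x_0)}\abs{u}\lesssim V(2\delta_D(x_0))\asymp V(\rho)$ (subadditivity of $V$) and the nonlocal tail $\int_{\abs{y}>\rho}\abs{u(x_0+y)}\,j(\abs{y})\,\D y$ (finite and of the right size, since $u$ vanishes on $D^c$ and decays like $V(\delta_D)$ inside $D$, while $\int_{\abs{y}\sim t}j(\abs{y})\,\D y\asymp\Psi(t^{-2})$), one obtains, after undoing the scaling and invoking $\phi\asymp V$ from \eqref{E2.4}, that $[u]_{C^\phi(B_{\rho/4}(x_0))}\le C$ with $C$ independent of $x_0$.

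\emph{Conclusion, and the main difficulty.} For \eqref{ET2.1A}, take $x,y\in D$ with $\delta_D(x)\le\delta_D(y)$: if $\abs{x-y}\le\tfrac18\delta_D(x)$ then $\abs{u(x)-u(y)}\le C\phi(\abs{x-y})$ by the interior estimate centred at $x$; otherwise $\delta_D(x)\le 8\abs{x-y}$, whence $\abs{u(x)}\le C\,V(\delta_D(x))\le C\,V(8\abs{x-y})\asymp\phi(\abs{x-y})$, and $u(y)$ is treated identically if $\delta_D(y)\lesssim\abs{x-y}$ while otherwise $x\in B_{\delta_D(y)/2}(y)$ and the interior estimate at $y$ applies; combining the cases with the $L^\infty$ bound yields \eqref{ET2.1A}. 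For \eqref{ET2.1B}, fix $z_0\in\partial D$, put $A_k=\{x\in D:2^{-k-1}<\delta_D(x)\le 2^{-k}\}\cap B_1(z_0)$, and run the oscillation-decay argument: comparing $u$ on a ball of radius $\sim 2^{-k}$ abutting $A_k$ with $a_k\,V(\delta_D)+b_k$ --- where $a_k,b_k$ are the best first-order fit of $u/V(\delta_D)$ on $A_{k-1}$, so $\abs{a_k}\lesssim 1$ by the previous step --- one has $\Psidel\bigl(u-a_k\,V(\delta_D)-b_k\bigr)=f-a_k\,\Psidel(V(\delta_D))$ with $L^\infty$ norm $\lesssim 1$, and after rescaling to unit scale (where the normalized right-hand side has size $\lesssim\Psi(2^{2k})^{-1}\lesssim 2^{-2\upkappa_1 k}$, hence is summable over $k$) the interior estimate forces $\osc_{A_k}(u/V(\delta_D))\le\theta\,\osc_{A_{k-1}}(u/V(\delta_D))+C\,2^{-2\upkappa_1 k}$ for a fixed $\theta\in(0,1)$; summing the geometric series and patching with the interior estimate gives $\bnorm{u/V(\delta_D)}_{C^\alpha(D)}\le C$ for a suitable $\alpha=\alpha(d,D,\Psi)$. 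The principal obstacle is the interior step: since $\Psi$ is not homogeneous one cannot simply quote a fixed Schauder estimate, and the heart of the matter is a nonlocal Krylov--Safonov (or De Giorgi--Nash--Moser) regularity theorem \emph{uniform over the rescaled family} $\{\cL_\rho\}$, with constants depending on $\Psi$ only through $\upkappa_1,\upkappa_2,b_1,b_2$ --- which is precisely where the weak scaling \eqref{A1}-\eqref{A2} and the sharp heat-kernel and Green-function bounds it entails (cf.\ \cite{BGR14,BGR14b}) are indispensable. Once this is in place, the barrier and iteration steps are comparatively routine.
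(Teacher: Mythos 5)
First, a point of comparison that matters: the paper does not prove \cref{T2.1} at all --- it is recalled verbatim from Kim--Kim--Lee--Lee \cite[Theorems~1.1 and 1.2]{KKLL}, so there is no in-paper argument to measure your proposal against. What you wrote is, in outline, a reconstruction of the strategy of that cited work (and of Ros-Oton--Serra \cite{RS14} in the fractional case): a barrier comparison giving $\abs{u}\le C\norm{f}_{L^\infty(D)}V(\delta_D)$, a scale-invariant interior H\"older estimate, and an oscillation-decay iteration in dyadic boundary layers for $u/V(\delta_D)$. As a roadmap this is the right route and is consistent with how the quoted result is actually obtained.

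As a proof, however, it has a genuine gap, which you yourself flag: everything rests on (i) an interior regularity estimate uniform over the rescaled family $\{\cL_\rho\}$, with constants depending on $\Psi$ only through $\upkappa_1,\upkappa_2,b_1,b_2$, and (ii) the bound $\abs{\Psidel (V(\delta_D))}\le C$ in $D$, i.e.\ that $V(\delta_D)$ is a good approximate solution near a $\cC^{1,1}$ boundary, built from the half-space identity $\Psidel(V(x_d))=0$. Neither is established in your sketch, and neither follows by citing \cite{BGR14,BGR14b}, which provide heat-kernel and Green-function bounds, not H\"older/Schauder estimates; proving (i) and (ii) under \eqref{A1}-\eqref{A2} is precisely the technical core of \cite{KKLL}, so asserting them amounts to assuming the theorem's main ingredients. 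A smaller structural issue: in the boundary iteration, comparing $u$ with $a_k\,V(\delta_D)+b_k$ (with a constant $b_k$) destroys the zero exterior datum on $D^c$ and complicates the nonlocal tail; the standard iteration in \cite{RS14,KKLL} uses only multiples $a_k\,V(\delta_D)$. In the context of this paper the honest ``proof'' of \cref{T2.1} is the citation to \cite{KKLL}; your text should either do the same or carry out (i) and (ii) in full.
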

Using \eqref{A1}, $\phi(r)\leq \kappa r^{\upkappa_1}$ for $r\leq 1$, for some constant $\kappa$,
and thus, it follows from
\eqref{ET2.1A} that $u$ is $\upkappa_1$-H\"{o}lder continuous upto the boundary. \eqref{ET2.1B} provides a fine
boundary decay estimate and this should be compared with the results in \cite{RS14}.
Our next result is the Hopf's lemma which we borrow from \cite[Theorem~3.3]{BL19}.

\begin{theorem}\label{T2.2}
Let $u\in \cC_{b}(\Rd)$ be a non-negative viscosity solution of
\begin{equation*}
-\Psidel u + c(x) u\leq 0\quad \text{in}\; D,
\end{equation*}
where $c$ is a bounded function. Then either $u\equiv 0$ in $\Rd$ or $u>0$ in $D$. Furthermore,
if $u>0$ in $D$, then there exists $\eta>0$ satisfying
\begin{equation}\label{ET2.2A}
\frac{u(x)}{V(\delta_D(x))}\,>\, \eta \quad \text{for}\; x\in D\,.
\end{equation}
\end{theorem}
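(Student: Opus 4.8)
\emph{Setup of the proof.} The statement is a strong maximum principle together with a Hopf-type boundary lower bound, and I would prove the two halves separately.

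\emph{Strong maximum principle.} Suppose $u\not\equiv 0$ and, for contradiction, that $u(x_0)=0$ for some $x_0\in D$; fix $r\in(0,\delta_D(x_0))$. Using the standard equivalent formulation of nonlocal viscosity solutions, in which one is allowed to test with functions that coincide with $u$ outside a ball, I would test the supersolution inequality at $x_0$ against $v_r := u\cdot\Ind_{\Rd\setminus B_r(x_0)}$ (so $v_r\equiv 0$ on $B_r(x_0)$ and $v_r=u$ elsewhere): this function lies below $u$, equals $u(x_0)=0$ at $x_0$, and is smooth near $x_0$. From \eqref{opt} one computes
\[
\Psidel v_r(x_0)=-\tfrac12\int_{\{|y|\ge r\}}\bigl(u(x_0+y)+u(x_0-y)\bigr)\,j(|y|)\,\D y\;\le\;0,
\]
a finite integral because $u$ is bounded and $\int_{\{|y|\ge r\}}j(|y|)\,\D y<\infty$; on the other hand the supersolution property forces $\Psidel v_r(x_0)\ge c(x_0)u(x_0)=0$. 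Hence the integral vanishes, so $u\equiv 0$ on $\Rd\setminus B_r(x_0)$ by continuity, and letting $r\downarrow 0$ gives $u\equiv 0$, a contradiction. This step uses only the boundedness of $c$, with no sign restriction.

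\emph{Hopf estimate.} Assume now $u>0$ in $D$. Since $u$ is continuous, fix a ball $B\Subset D$ with $m:=\inf_{\bar B}u>0$, set $\lambda_0:=\|c\|_\infty$ and $W:=D\setminus\bar B$. Because $u\ge 0$, the hypothesis $-\Psidel u+c u\le 0$ upgrades in $W$ to the assertion that $u$ is a viscosity supersolution of $(\Psidel+\lambda_0)v=0$: for $\xi$ touching $u$ from below at $x\in W$ we have $\Psidel\xi(x)\ge c(x)u(x)\ge-\lambda_0 u(x)=-\lambda_0\xi(x)$, i.e.\ $\Psidel\xi(x)+\lambda_0\xi(x)\ge 0$. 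Let $\uptau_W$ be the first exit time from $W$ and put $h(x):=\Exp_x\bigl[\E^{-\lambda_0\uptau_W}\Ind_{\{X_{\uptau_W}\in\bar B\}}\bigr]$ for $x\in\Rd$; this is the bounded solution of $(\Psidel+\lambda_0)v=0$ in $W$ with exterior value $\Ind_{\bar B}$, it satisfies $mh\le u$ on $W^c$ (it equals $m\le u$ on $\bar B$ and $0\le u$ on $D^c$), and so the comparison principle for the proper operator $\Psidel+\lambda_0$ yields $u\ge mh$ on $\Rd$. It remains to bound $h$ below by a multiple of $V(\delta_D)$ on $D$. By continuity $h\ge\tfrac12$ on a neighbourhood $N\supset\bar B$, where $V(\delta_D)\le V(\diam D)$, so the bound is clear there. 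For $x\in D\setminus N$ the Lévy system (Ikeda--Watanabe) formula together with $j(|X_s-z|)\ge j(\diam D)$ for $X_s,z\in D$ gives
\[
h(x)\;\ge\;|B|\,j(\diam D)\,\Exp_x\Bigl[\int_0^{\uptau_W}\!\E^{-\lambda_0 s}\,\D s\Bigr]\;\gtrsim\;\Exp_x[\uptau_W],
\]
where the last comparison (discounted versus undiscounted mean exit time of the bounded domain $W$) follows once more from the comparison principle, since $(\Psidel+\lambda_0)\Exp_\cdot[\uptau_W]=1+\lambda_0\Exp_\cdot[\uptau_W]$ is bounded. Finally, for the $\cC^{1,1}$ domain $W$ one has the two-sided estimate $\Exp_x[\uptau_W]\asymp V(\delta_W(x))$, and $\delta_W(x)=\delta_D(x)$ for $x\in D\setminus N$, so $h\gtrsim V(\delta_D)$ throughout $D$; hence $u\ge mh\ge\eta\,V(\delta_D)$ in $D$ for some $\eta>0$.

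\emph{Main difficulty.} The first part is immediate once the ``coincides with $u$ outside a ball'' form of the definition is invoked. The real content sits in the second part, namely the sharp boundary decay $\Exp_x[\uptau_W]\asymp V(\delta_W(x))$ (equivalently, the lower bound on the Green function of $W$ against $V(\delta_W)$). This is exactly where the fine potential theory of $\Psidel$ enters---the two-sided Green function/heat-kernel estimates in $\cC^{1,1}$ domains that are available under \eqref{A1}--\eqref{A2}---and it is the only genuinely operator-specific input; I would quote it from the literature on subordinate Brownian motions rather than reprove it. The comparison principle for $\Psidel+\lambda_0$, the Lévy system formula, the fact that a pure-jump process exits a $\cC^{1,1}$ domain by a jump, and the identification of $h$ with the solution of the exterior-value problem are standard and would likewise be cited.
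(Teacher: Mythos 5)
The paper never actually proves \cref{T2.2}: it is imported verbatim from \cite[Theorem~3.3]{BL19}, so your proposal is being compared against a citation rather than an internal argument. What you wrote is a correct, essentially self-contained proof along the standard probabilistic route, and it is entirely consistent with the potential-theoretic toolkit the paper itself uses in Section 4. The strong maximum principle via the glued test function $u\,\Ind_{\Rd\setminus B_r(x_0)}$ is fine, with the one caveat that Definition~\ref{D2.1} asks for globally strict touching by a function in $\cC^2_b(x_0)$, so you must (as you implicitly do) invoke the standard equivalence with the formulation in which the operator is evaluated on the function equal to the smooth test function near $x_0$ and to $u$ outside; that equivalence is in \cite{CS09}. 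For the Hopf bound, dominating $u$ from below by $m\,\Exp_x\bigl[\E^{-\lambda_0\uptau_W}\Ind_{\bar B}(X_{\uptau_W})\bigr]$, then using the Ikeda--Watanabe formula and the exit-time lower bound $\Exp_x[\uptau_W]\gtrsim V(\delta_W(x))$ is exactly the kind of argument available under \eqref{A1}--\eqref{A2}; the exit-time bound follows, for instance, by integrating in $t$ the survival-probability estimate from \cite{BGR14} that the paper quotes in the proof of \cref{L4.6}, so your ``main difficulty'' is indeed citable.

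Two small repairs are needed, neither fatal. First, your claim that $\delta_W(x)=\delta_D(x)$ on $D\setminus N$ is false for points outside $N$ that are closer to $\bar B$ than to $\partial D$; it is harmless because on $D\setminus N$ one has $\dist(x,\bar B)\ge d_0>0$, hence $\delta_W(x)\ge \delta_D(x)\wedge d_0$ and therefore $V(\delta_W(x))\ge \frac{V(d_0)}{V(\diam D)}\,V(\delta_D(x))$, which is all you need. Second, the comparison between the viscosity supersolution $u$ and the probabilistically defined $h$ deserves a sentence: either replace the exterior datum $\Ind_{\bar B}$ by a continuous function bounded above by it (so that $h\in\cC_b(\Rd)$ by boundary regularity) and apply a comparison principle for $\Psidel+\lambda_0$ in $W$ --- e.g.\ \cref{T2.3} with $c\equiv-\lambda_0$, whose hypothesis holds since $\lambda(-\lambda_0)=\lambda(0)+\lambda_0>0$ in $W$, together with \cite[Lemma~5.8]{CS09} to handle the difference $mh-u$ --- or argue directly through the supermartingale property of $\E^{-\lambda_0 t}u(X_t)$ up to $\uptau_W$. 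With these routine adjustments your proof goes through.
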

To introduce our next results we required the principal eigenvalue for the operator 
$-\Psidel + c$ where $c$ is a continuous and bounded function in $D$. The principal eigenvalue is defined
in the same fashion as in \cite{BNV} and given by
\begin{equation}\label{E2.8}
\lambda(c)\,=\,\sup\{\lambda\; :\;\exists\;\psi\in\cC_{b, +}(D) \; \mbox{such that}\; -\Psidel\psi
+(c(x)+\lambda)\psi\leq 0 \; \mbox{in}\; D\}.
\end{equation}
Note that for $c=0$ we have $\lambda(0)=\lambda_1$. Next we recall the following
refined maximum principle from \cite[Theorem~3.4 and Lemma~3.1]{BL19}.
\begin{theorem}\label{T2.3}
Suppose that $\lambda(c)>0$ and $v\in\cC_b(\Rd)$ be a solution to
$$-\Psidel v + c v\geq 0 \quad \text{in}\; D, \quad v\leq 0\quad \text{in}\; D^c.$$ 
Then we have $v\leq 0$. 

Again, if $w\in\cC_b(\Rd)$ is a solution to 
$$-\Psidel w + (c(x) +\lambda(c))w \geq 0 \quad \text{in}\; D, \quad w\leq 0\quad \text{in}\; D^c,
\quad w(x_0)>0,$$
for an $x_0\in D$, then $w=t \varphi^*$ for some $t>0$, where $\varphi^*$ denotes the positive principal 
eigenfunction corresponding to $\lambda(c)$.
\end{theorem}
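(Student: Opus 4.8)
Here is how I would prove the two assertions of \cref{T2.3}; both are Berestycki--Nirenberg--Varadhan type statements, the first a refined maximum principle (valid precisely because $\lambda(c)>0$), the second the rigidity at the critical level $\lambda(c)$. The object that makes everything work is the positive principal eigenfunction $\varphi^*\in\cC_b(\Rd)$ of $-\Psidel+c$ on $D$ (its existence and simplicity belong to the eigenvalue theory for $-\Psidel$, cf.\ \cite{BL17a}): it satisfies $-\Psidel\varphi^*+(c+\lambda(c))\varphi^*=0$ in $D$, $\varphi^*>0$ in $D$, $\varphi^*=0$ in $D^c$. Two facts about $\varphi^*$ are used repeatedly: by \cref{T2.1} (applied with right-hand side $(c+\lambda(c))\varphi^*\in\cC(D)\cap L^\infty(D)$) we have $\varphi^*\le C_1 V(\delta_D)$ in $D$, while \cref{T2.2} (Hopf) gives $\varphi^*\ge\eta_0 V(\delta_D)$ in $D$; in short $\varphi^*\asymp V(\delta_D)$ up to $\partial D$. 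I also use the elementary maximum principle for $-\Psidel$: by the Green representation \eqref{E2.3}, a bounded $\phi$ with $-\Psidel\phi\le0$ in $D$ and $\phi\ge0$ in $D^c$ satisfies $\phi(x)\ge\Exp_x[\phi(X_\uptau)]\ge0$, hence $\phi\ge0$.

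\emph{Proof that $v\le0$.} First, $v$ is controlled near $\partial D$: since $v$ is bounded and $\Psidel v\le cv\le\Lambda\df\norm{c}_{L^\infty(D)}\norm{v}_{L^\infty}$ in $D$, the function $\Lambda\,\cG 1-v$ has $-\Psidel(\Lambda\cG1-v)=\Lambda-\Psidel(\Lambda\cG1)+\Psidel v-\Lambda\le0$... more precisely $\Psidel(\Lambda\cG1-v)=\Lambda-\Psidel v\ge0$ in $D$, and it is nonnegative in $D^c$, so by the previous remark $v\le\Lambda\cG 1\le C\,V(\delta_D)$ near $\partial D$ (using $\cG 1\asymp V(\delta_D)$, again by \cref{T2.1,T2.2}). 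Since $\varphi^*$ is bounded below on compact subsets of $D$, this gives $M\df\sup_D v/\varphi^*<\infty$. Suppose for contradiction $v\not\le0$, so $M>0$; set $h\df M\varphi^*-v$. Then $h\in\cC_b(\Rd)$, $h\ge0$ on $\Rd$, and, using the eigenfunction equation and the hypothesis $-\Psidel v+cv\ge0$, $h$ satisfies $-\Psidel h+c\,h=-M\lambda(c)\varphi^*-(-\Psidel v+cv)\le-M\lambda(c)\varphi^*\le0$ in $D$ (in the viscosity sense; here one uses the linearity of $-\Psidel$ and the stability of viscosity sub/supersolutions). By \cref{T2.2} applied to $h$ with coefficient $c$, either $h\equiv0$ in $\Rd$ --- impossible, since then $v=M\varphi^*$ would force $-\Psidel v+cv=-M\lambda(c)\varphi^*<0$ in $D$ --- or $h>0$ in $D$ with $h\ge\eta\,V(\delta_D)$ for some $\eta>0$. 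In the latter case, $\varphi^*\le C_1 V(\delta_D)$ yields $v=M\varphi^*-h\le(M-\eta/C_1)\varphi^*$ in $D$, so $M\le M-\eta/C_1$, a contradiction. Hence $v\le0$.

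\emph{Proof of the rigidity statement.} Now $w\in\cC_b(\Rd)$ satisfies $-\Psidel w+(c+\lambda(c))w\ge0$ in $D$, $w\le0$ in $D^c$, $w(x_0)>0$. Arguing exactly as above with $c$ replaced by $c+\lambda(c)$, we get $w\le C\,V(\delta_D)$ near $\partial D$; together with boundedness on compacts and $\varphi^*\ge\eta_0 V(\delta_D)$ this shows $t\varphi^*\ge w$ on $\Rd$ for all large $t$, so $t^*\df\inf\{t\ge0:\ w\le t\varphi^*\text{ in }\Rd\}$ is finite, and $t^*>0$ because $t^*\varphi^*(x_0)\ge w(x_0)>0$. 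Put $g\df t^*\varphi^*-w\ge0$ on $\Rd$. Using the eigenfunction equation once more, $-\Psidel g+(c+\lambda(c))g=-(-\Psidel w+(c+\lambda(c))w)\le0$ in $D$. Apply \cref{T2.2} to $g$ with coefficient $c+\lambda(c)$: either $g\equiv0$ on $\Rd$, i.e.\ $w=t^*\varphi^*$ with $t^*>0$, which is the claim; or $g>0$ in $D$ with $g\ge\eta V(\delta_D)\ge(\eta/C_1)\varphi^*$ in $D$, whence $w\le(t^*-\varepsilon)\varphi^*$ in $D$ for some $\varepsilon\in(0,t^*)$ and trivially in $D^c$, contradicting the minimality of $t^*$. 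Therefore $w=t^*\varphi^*$, $t^*>0$.

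\emph{Main difficulty.} The genuinely delicate point in both steps is the behaviour at $\partial D$: a priori the ratios $v/\varphi^*$ and $w/\varphi^*$ could blow up near the boundary, and one must both rule this out and ensure that the extremal constants $M,t^*$ are ``seen inside'' $D$. This is exactly where the two-sided bound $\varphi^*\asymp V(\delta_D)$ and the matching upper barrier $v^+,w^+\lesssim V(\delta_D)$ enter; these boundary estimates for $-\Psidel$ require the structural hypotheses \eqref{A1}--\eqref{A2} (via \cref{T2.1,T2.2}) and are not available for arbitrary nonlocal operators. A secondary, routine point is that $\varphi^*,v,w$ need not be $\cC^2$, so the manipulations $-\Psidel h+c\,h\le\cdots$ must be read in the viscosity sense; this is harmless, since $-\Psidel$ is linear and \cref{T2.2} is already phrased for viscosity (super)solutions, so no pointwise evaluation of $-\Psidel$ at a contact point is ever needed.
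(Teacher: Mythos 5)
Your proof is correct. Note first that the paper does not actually prove \cref{T2.3}: it is quoted from \cite[Theorem~3.4 and Lemma~3.1]{BL19}, so there is no in-paper argument to compare against. What you have written is a self-contained derivation by the sliding method, and it is structurally the same device the paper itself deploys for its comparison principle \cref{L3.1}: introduce the extremal multiple of $\varphi^*$ dominating the solution, use the two-sided bound $\varphi^*\asymp V(\delta_D)$ (upper bound from \cref{T2.1}, lower bound from \cref{T2.2}) together with an upper barrier $v^+\lesssim \cG 1\lesssim V(\delta_D)$ to see that this multiple is finite, and let the quantitative Hopf lemma \cref{T2.2} force a strict improvement, contradicting extremality. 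The argument goes through; three points should be made explicit in a polished write-up. First, the subtraction steps (that $h=M\varphi^*-v$ and $g=t^*\varphi^*-w$ are viscosity supersolutions of the asserted inequalities) are not automatic for viscosity solutions but follow from \cite[Lemma~5.8]{CS09}, exactly as in the proof of \cref{L3.1}. Second, your first displayed manipulation of $\Lambda\,\cG 1-v$ is garbled before you correct it; the clean statement is that $\Lambda\,\cG1-v$ is a viscosity supersolution of $\Psidel(\cdot)=0$ in $D$ and nonnegative in $D^c$, hence nonnegative by the representation \eqref{E2.3}. Third, in the branch $h\equiv0$ you identify the two right-hand sides of the viscosity relations satisfied by the single function $v=M\varphi^*$ to conclude $-M\lambda(c)\varphi^*\ge0$ is violated; this identification is legitimate (the paper makes the identical move in the $a=\lambda_1$ case of the proof of \cref{T1.1}) but deserves a sentence. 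None of these is a gap.
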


The next result is an anti-maximum principle which is slightly stronger than \cite[Theorem~3.5]{BL19}.
\begin{theorem}\label{T2.4}
Let $f\in\cC(\bar D)$ and $f\lneq 0$. Then there exists a $\delta>0$ such that for every $\lambda\in(\lambda(c),
\lambda(c)+\delta)$ if $u$ is a solution of 
\begin{equation}\label{ET2.3A}
-\Psidel u + (c(x)+\lambda)u=f\quad \text{in}\; D, \quad \text{and}\quad u=0\quad \text{in}\; D^c\,,
\end{equation}
then $\sup_{D}\frac{u(x)}{V(\delta_D(x))}<0$. 
\end{theorem}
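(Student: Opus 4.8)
The plan is to argue by contradiction and compactness, and to pin down the behaviour at $\lambda(c)^+$ through the Fredholm pairing against the principal eigenfunction. Write $\varphi^*$ for the principal eigenfunction of $-\Psidel+c$ associated with $\lambda(c)$, normalized by $\norm{\varphi^*}_{\cC(D)}=1$; thus $-\Psidel\varphi^*+(c+\lambda(c))\varphi^*=0$ in $D$, $\varphi^*>0$ in $D$, $\varphi^*=0$ in $D^c$, and by the Green function representation \eqref{E2.3} we have $\varphi^*=\cG\bigl((c+\lambda(c))\varphi^*\bigr)$. Since $f\in\cC(\bar D)$ with $f\lneq 0$ and $\varphi^*>0$ in $D$, the number $\vartheta\df\int_D f\varphi^*\,\D x$ is strictly negative; this is the only sign input of the argument.

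Suppose the assertion fails. Then there are $\lambda_n\to\lambda(c)$ with $\lambda_n>\lambda(c)$ and solutions $u_n$ of \eqref{ET2.3A} with $\lambda=\lambda_n$ for which $\sup_D\frac{u_n}{V(\delta_D)}\geq 0$. Set $\beta_n\df 1+\norm{u_n}_{\cC(D)}$ and $v_n\df\beta_n^{-1}u_n$, so $\norm{v_n}_{\cC(D)}\leq 1$ and $v_n$ is a viscosity solution of $\Psidel v_n=(c+\lambda_n)v_n-\beta_n^{-1}f$ in $D$ with $v_n=0$ in $D^c$, the right-hand side being bounded uniformly in $n$. By \cref{T2.1} the sequence $\{v_n\}$ is bounded in $C^\phi(\bar D)$ and $\{v_n/V(\delta_D)\}$ is bounded in $C^\alpha(\bar D)$; since $\phi(r)\to0$ as $r\downarrow0$, Arzel\`a--Ascoli yields a subsequence (not relabelled) along which $\beta_n^{-1}\to\gamma\in[0,1]$, $v_n\to v_*$ uniformly on $\Rd$, and $v_n/V(\delta_D)\to v_*/V(\delta_D)$ uniformly on $\bar D$, with $v_*\in\cC_b(\Rd)$ and $v_*=0$ in $D^c$. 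By stability of viscosity solutions under uniform convergence (using $\lambda_n\to\lambda(c)$ and $\beta_n^{-1}\to\gamma$ for the coefficients and data), $v_*$ solves $-\Psidel v_*+(c+\lambda(c))v_*=\gamma f$ in $D$, $v_*=0$ in $D^c$; equivalently, by \eqref{E2.3}, $v_*=\cG\bigl((c+\lambda(c))v_*-\gamma f\bigr)$.

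Now pair these identities against $(c+\lambda(c))\varphi^*$ and use the symmetry of the Green operator $\cG$ (which follows from $G^D(x,y)=G^D(y,x)$). For $v_*$ this gives $\int_D v_*\,(c+\lambda(c))\varphi^*\,\D x=\int_D\bigl((c+\lambda(c))v_*-\gamma f\bigr)\varphi^*\,\D x$, i.e. $\gamma\vartheta=0$; as $\vartheta<0$ we get $\gamma=0$, hence $\norm{u_n}_{\cC(D)}\to\infty$ and $\norm{v_*}_{\cC(D)}=\lim\beta_n^{-1}\norm{u_n}_{\cC(D)}=1$. Thus $v_*$ is a nontrivial solution of $-\Psidel v_*+(c+\lambda(c))v_*=0$ vanishing in $D^c$, and simplicity of $\lambda(c)$ (\cref{T2.3}) forces $v_*=s\,\varphi^*$ with $s\in\{-1,1\}$. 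Pairing the $v_n$-identity against $(c+\lambda(c))\varphi^*$ in the same way gives $(\lambda_n-\lambda(c))\int_D v_n\varphi^*\,\D x=\beta_n^{-1}\vartheta<0$; since $\lambda_n>\lambda(c)$ we get $\int_D v_n\varphi^*\,\D x<0$ for all $n$, hence $s\int_D(\varphi^*)^2\,\D x=\int_D v_*\varphi^*\,\D x\leq0$, so $s=-1$ and $v_*=-\varphi^*$.

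Finally, by Hopf's lemma (\cref{T2.2}) applied to $\varphi^*$ there is $\eta>0$ with $\varphi^*/V(\delta_D)>\eta$ on $D$, so $v_*/V(\delta_D)=-\varphi^*/V(\delta_D)<-\eta$ on $D$. The uniform convergence $v_n/V(\delta_D)\to v_*/V(\delta_D)$ on $\bar D$ then gives $\sup_D\frac{v_n}{V(\delta_D)}\leq-\tfrac{\eta}{2}$ for all large $n$, whereas $\sup_D\frac{v_n}{V(\delta_D)}=\beta_n^{-1}\sup_D\frac{u_n}{V(\delta_D)}\geq0$ by assumption; this contradiction proves the theorem (and the bound $\sup_D\frac{u}{V(\delta_D)}<0$ in particular recovers the plain anti-maximum principle $u<0$ in $D$). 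The steps I expect to be most delicate are the joint passage to the limit in the viscosity sense together with the up-to-the-boundary uniform convergence of $v_n/V(\delta_D)$ — precisely the content that \cref{T2.1} supplies — and the clean form of the pairing against $\varphi^*$: carrying it out through the Green operator $\cG$ rather than an integration by parts is what makes the identity $\gamma\vartheta=0$ rigorous while working only with (Hölder-regular) viscosity solutions.
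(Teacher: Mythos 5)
Your proof is correct and takes a genuinely different route from the paper. The paper leans on the anti-maximum principle of \cite[Theorem~3.5]{BL19} as a black box: it first quotes that result to get $u<0$ in $D$ for $\lambda$ slightly above $\lambda(c)$, then runs a contradiction argument that reuses Steps~1 and~2 \emph{inside} the proof of that cited theorem (Step~1 to rule out $\|u_n\|_{L^\infty}$ staying bounded, Step~2 to get the weighted uniform convergence $v_n/V(\delta_D)\to t\varphi^*/V(\delta_D)$ with $t<0$), and finally contradicts the strict positivity of the continuous extension of $\varphi^*/V(\delta_D)$ on $\bar D$. You instead give a self-contained argument that never invokes the plain anti-maximum principle: you normalize by $\beta_n=1+\|u_n\|_{\cC(D)}$, pass to a limit $(v_*,\gamma)$ via \cref{T2.1} and Arzel\`a--Ascoli, and then use the symmetry of the Green operator $\cG$ to run a Fredholm pairing against $(c+\lambda(c))\varphi^*$ — first obtaining $\gamma\vartheta=0$ with $\vartheta=\int_D f\varphi^*<0$ (forcing blowup of $\|u_n\|$), then obtaining $(\lambda_n-\lambda(c))\int_D v_n\varphi^*=\beta_n^{-1}\vartheta<0$ (pinning the sign $v_*=-\varphi^*$). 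The final contradiction from Hopf's lemma and the weighted uniform convergence is essentially the same as the paper's. What your route buys is a cleaner, fully explicit derivation of both the blowup and the sign $t<0$ from a single mechanism (the pairing), at the cost of explicitly using the symmetry $G^D(x,y)=G^D(y,x)$; the paper's route is shorter given \cite[Theorem~3.5]{BL19}, but it depends on internal details of that theorem's proof rather than its statement, which is less transparent. Both arguments are sound in the present self-adjoint setting.
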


\begin{proof}
Using \cite[Theorem~3.5]{BL19} we have a $\delta_1>0$  such that for any $\lambda\in 
(\lambda(c), \lambda(c)+\delta_1)$ if $u$ is a solution to \eqref{ET2.3A} then $u<0$ in $D$.
Now suppose, on the contrary,  that the conclusion of the theorem does not hold. Then we find a sequence of $\delta_n\to 0$ and solution
$u_n<0$ satisfying
\begin{equation}\label{ET2.3B}
\max_{\partial{D}}\frac{u_n(x)}{V(\delta_D(x))}\,=\,0.
\end{equation}
First we observe that $\norm{u_n}_{L^\infty}\to\infty$ as $n\to\infty$. Otherwise, using the argument of Step 1
in \cite[Theorem~3.5]{BL19} we obtain a solution $u\lneq 0$ of 
$$-\Psidel u + (c(x)+\lambda^*) u\,=\, f(x)\quad \text{in}\; D, \quad u=0\quad \text{in}\; D^c.$$
In view of \cref{T2.3}, we must have $u=t\varphi^*$ for some $t<0$,
 where $\varphi^*$ is the positive
Dirichlet principal eigenfunction of $-\Psidel + c$ in $D$.
This is not possible since
$f\neq 0$. Thus we must have $\norm{u_n}_{L^\infty}\to\infty$. Define $v_n=\frac{u_n}{\norm{u_n}_{L^\infty}}$. Then the argument of
Step 2 in \cite[Theorem~3.5]{BL19} gives us
$$\max_{\bar D}\Big|\frac{v_n}{V(\delta_D(x))}-\frac{t\varphi^*}{V(\delta_D(x))}\Big|\to 0, 
\quad \text{as}\; n\to\infty,$$
for some $t<0$. Combining with \eqref{ET2.3B} we must find a point 
$x_0\in\partial{D}$ such that
$\frac{\varphi^*(x_0)}{V(\delta_D(x_0))}=0$. But $\frac{\varphi^*}{V(\delta_D)}$ can be continuously
extended in $\bar{D}$ (by \cref{T2.1}) and the extension is positive in $\bar{D}$, by \cref{T2.2}. 
Thus we arrive at a contradiction. Hence we have a $\delta>0$ as claimed by the theorem.
\end{proof}
Before we conclude this section let us also mention the following implicit function theorem from 
\cite[Appendix]{CR71}. In the following theorem $\cX$, $\cY$ denote Banach spaces.

\begin{theorem}\label{T2.5}
Let $(s_0, u_0)\in \RR\times\cX$ and $F:\RR\times\cX\to \cY$ be continuously differentiable in
some some neighbourhood of $(s_0, u_0)$. Assume that $F(s_0, u_0)=0$. Suppose that $F_u(s_0, u_0)$ is 
a linear homeomorphism of $\cX$ onto $\cY$. Then there is exactly one $\cC^1$ function 
$z:(s_0-\varepsilon, s_0+\varepsilon)\to \cX$ with $z(s_0)=0$ satisfying $F(s, u_0 + z(s))=0$
for $s\in (s_0-\varepsilon, s_0+\varepsilon)$ where $\varepsilon$ is some positive number.
\end{theorem}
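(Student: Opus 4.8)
The plan is to deduce this from the Banach fixed point theorem, following the classical route to the implicit function theorem on Banach spaces; the fact that the parameter $s$ ranges over the one-dimensional space $\RR$ plays no special role. Set $L \df F_u(s_0, u_0) \in \mathcal{L}(\cX, \cY)$, which is a linear homeomorphism by hypothesis, and for $(s, v)$ near $(s_0, 0)$ define $\Phi(s, v) \df v - L^{-1} F(s, u_0 + v) \in \cX$. Since $L^{-1}$ is injective, the equation $F(s, u_0 + v) = 0$ is equivalent to $\Phi(s, v) = v$, so for each fixed $s$ we are looking for a fixed point of the map $\Phi(s, \cdot)$, and the whole statement will come out of the parametrized contraction principle.

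First I would show that $\Phi(s, \cdot)$ maps a small closed ball into itself and is a uniform contraction there. Its Fr\'echet derivative $\Phi_v(s, v) = I_\cX - L^{-1} F_u(s, u_0 + v)$ vanishes at $(s_0, 0)$ because $F_u(s_0, u_0) = L$, and $(s, v) \mapsto \Phi_v(s, v)$ is continuous since $F \in \cC^1$; hence there are $\rho, \varepsilon_0 > 0$ with $\norm{\Phi_v(s, v)}_{\mathcal{L}(\cX)} \le \tfrac12$ for $|s - s_0| \le \varepsilon_0$ and $\norm{v}_\cX \le \rho$, and then the mean value inequality gives that $\Phi(s, \cdot)$ is $\tfrac12$-Lipschitz on $\bar B_\rho(0)$. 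Shrinking $\varepsilon_0$, continuity of $s \mapsto F(s, u_0)$ together with $F(s_0, u_0) = 0$ yields $\norm{\Phi(s, 0)}_\cX = \norm{L^{-1} F(s, u_0)}_\cX \le \rho/2$, so $\norm{\Phi(s, v)}_\cX \le \norm{\Phi(s, v) - \Phi(s, 0)}_\cX + \norm{\Phi(s, 0)}_\cX \le \tfrac12 \norm{v}_\cX + \rho/2 \le \rho$ on $\bar B_\rho(0)$. Banach's theorem then produces, for each $s \in (s_0 - \varepsilon_0, s_0 + \varepsilon_0)$, a unique $z(s) \in \bar B_\rho(0)$ with $F(s, u_0 + z(s)) = 0$; since $v = 0$ solves the equation at $s = s_0$, uniqueness forces $z(s_0) = 0$, and uniqueness of the function $z$ (first among maps staying in the ball, then locally, by a short continuity argument) is immediate.

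Next I would prove $z \in \cC^1$. Continuity of $z$ follows from the contraction estimate: $\norm{z(s) - z(s')}_\cX \le \tfrac12 \norm{z(s) - z(s')}_\cX + \norm{\Phi(s, z(s')) - \Phi(s', z(s'))}_\cX$, and the last term tends to $0$ as $s \to s'$ by continuity of $F$ in its first argument, so $\norm{z(s) - z(s')}_\cX \le 2 \norm{\Phi(s, z(s')) - \Phi(s', z(s'))}_\cX \to 0$. For differentiability, note that $F_u(s, u_0 + z(s))$ remains a linear homeomorphism for $s$ near $s_0$, since the invertible operators form an open subset of $\mathcal{L}(\cX, \cY)$ and $(s, v) \mapsto F_u(s, u_0 + v)$ is continuous with value $L$ at $(s_0, 0)$. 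Expanding $0 = F(s + t, u_0 + z(s + t)) - F(s, u_0 + z(s))$ to first order using the $\cC^1$ hypothesis, solving the resulting relation for the difference quotient $t^{-1}(z(s + t) - z(s))$, and letting $t \to 0$ with the help of continuity of $z$, $F_s$, $F_u$ and of operator inversion, one obtains that $z$ is differentiable with $z'(s) = -F_u(s, u_0 + z(s))^{-1} F_s(s, u_0 + z(s))$; continuity of this expression in $s$ then gives $z \in \cC^1$.

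The Lipschitz estimates and the fixed point step are routine. The one point that requires genuine care is the differentiability of $z$: one must justify that the difference quotients actually converge, not merely that a candidate derivative solves the linearized equation. This is the standard subtlety, and I would handle it by writing the increment of $F$ as $F_u(s, u_0 + z(s))$ applied to $z(s + t) - z(s)$ plus a remainder that is $\sorder\!\left(|t| + \norm{z(s + t) - z(s)}_\cX\right)$, and then using the already-established continuity of $z$ together with the uniform invertibility of $F_u$ near $(s_0, 0)$ to absorb the remainder and close the estimate.
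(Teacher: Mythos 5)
Your proposal is correct, but note that the paper does not prove \cref{T2.5} at all: it is imported verbatim from the appendix of Crandall--Rabinowitz \cite{CR71}, so there is no in-paper argument to compare against. What you give is the standard parametrized-contraction proof of the Banach-space implicit function theorem: reformulating $F(s,u_0+v)=0$ as the fixed-point equation $v=\Phi(s,v)$ with $\Phi(s,v)=v-L^{-1}F(s,u_0+v)$, using continuity of $F_u$ to make $\Phi(s,\cdot)$ a uniform $\tfrac12$-contraction on a closed ball and a self-map after shrinking the $s$-interval, and then deriving continuity and differentiability of the fixed-point branch. The two delicate points are handled adequately: for differentiability you correctly insist on first absorbing the $\sorder\bigl(|t|+\norm{z(s+t)-z(s)}_{\cX}\bigr)$ remainder to get a Lipschitz-type bound on the increment before passing to the limit in the difference quotient, which is exactly where naive arguments fail. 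The only place where your sketch is thinner than it should be is the uniqueness of $z$ on the whole interval $(s_0-\varepsilon,s_0+\varepsilon)$ among functions merely assumed continuous with $z(s_0)=0$: a competing solution is not a priori confined to the ball $\bar B_\rho(0)$ away from $s_0$, so one should run the open-closed (connectedness) argument on the coincidence set, which requires arranging $\norm{z(s)}_{\cX}\le 2\norm{\Phi(s,0)}_{\cX}<\rho$ strictly so that a nearby competitor re-enters the ball where the fixed point is unique. This is the ``short continuity argument'' you allude to; spelling it out would close the sketch completely, but it is a routine refinement rather than a gap in the approach.
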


\section{Proof of \cref{T1.1,T1.2}}\label{S-T1.1}
The goal of this section is to prove \cref{T1.1,T1.2}. Let us start with the main comparison principle
required in this section.

\begin{lemma}\label{L3.1}
Suppose that $g :\bar{D}\times[0, \infty) \to \RR$ is a continuous function, 
locally Lipschitz in the second variable uniformly with respect to the first,
such that
$$\frac{g(x, s)}{s}\; \mbox{is strictly decreasing for $s > 0$}$$
at each $x \in D$. In addition, also assume that $g(x, 0)=0$ and $g_s(x, 0)$
is continuous in $\bar{D}$.
Let  $u,v \in \cC_b(\Rd)$ be such that
\begin{enumerate}
\item $-\Psidel v + g(x,v) \leq 0 \leq\, \tilde{g}(x)= - \Psidel u + g(x,u)$ in $D$, where $\tilde{g}$ is a
continuous function.
\item $v > 0$, $u\gneq 0$ in $D$ and $v \geq u = 0$ in $D^c$.
\end{enumerate}
Then we have $v \geq u$ in $\Rd$.
\end{lemma}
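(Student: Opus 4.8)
The plan is to use a sliding/scaling argument exploiting the strict monotonicity of $s\mapsto g(x,s)/s$. First I would reduce to showing that $v\geq tu$ in $\Rd$ for $t=1$, by introducing $t^* \df \sup\{t>0: tu\leq v \text{ in }\Rd\}$ and proving $t^*\geq 1$. Since $v>0$ in $D$ and $u\gneq 0$ with $v\geq u=0$ in $D^c$, and since by Hopf's lemma (\cref{T2.2}) applied to both $u$ and $v$ one has $u(x)\le C\,V(\delta_D(x))$ (from \cref{T2.1}, as $\tilde g$ is bounded) and $v(x)\ge \eta\,V(\delta_D(x))$ in $D$, the set of admissible $t$ is nonempty and $t^*\in(0,\infty)$ is well-defined; by continuity, $t^*u\leq v$ in $\Rd$.

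Next, suppose for contradiction that $t^*<1$. The function $w\df v-t^*u$ satisfies $w\geq 0$ in $\Rd$, $w=0$ in $D^c$, and in $D$,
\begin{equation*}
-\Psidel w \;\geq\; t^*g(x,u)-g(x,v)\;=\;\bigl(g(x,v)-t^*g(x,u)\bigr)\cdot(-1)\,,
\end{equation*}
so I would compute, using the hypotheses $-\Psidel v+g(x,v)\le 0\le -\Psidel u+g(x,u)$,
\begin{equation*}
-\Psidel w \;\geq\; -g(x,v)+t^*g(x,u)\;=\;-\,v\,\frac{g(x,v)}{v}+t^*u\,\frac{g(x,u)}{u}\,.
\end{equation*}
On the set where $w=0$, i.e. $v=t^*u$, this gives $-\Psidel w\geq t^*u\bigl(\frac{g(x,u)}{u}-\frac{g(x,t^*u)}{t^*u}\bigr)>0$ whenever $u(x)>0$, because $t^*<1$ forces $t^*u<u$ and $g(x,\cdot)/\cdot$ is strictly decreasing. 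More globally, since $t^*u\le v$ and $t^*<1$, monotonicity of $g/\cdot$ yields $\frac{g(x,v)}{v}\le\frac{g(x,t^*u)}{t^*u}$ hence
\begin{equation*}
-\Psidel w \;\geq\; -v\,\frac{g(x,t^*u)}{t^*u}+t^*u\,\frac{g(x,u)}{u}\;\ge\; t^*u\Bigl(\frac{g(x,u)}{u}-\frac{g(x,t^*u)}{t^*u}\Bigr)\;\ge\;0\,,
\end{equation*}
so that $-\Psidel w + c(x)w\ge 0$ in $D$ for a suitable bounded $c$ (absorbing the linear part via the local Lipschitz/differentiability of $g$ at interior points, or simply $c\equiv 0$ after noting $-\Psidel w\ge 0$). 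By the strong maximum principle / Hopf's lemma (\cref{T2.2}) either $w\equiv 0$ or $w>0$ in $D$ with $w\ge\eta' V(\delta_D)$. The case $w\equiv0$ is impossible since then $v=t^*u<u\le v$ in $D$ near $\partial D$ — more precisely $v=t^*u$ would contradict $-\Psidel w>0$ on $\{u>0\}$; and if $w>0$ with $w\ge\eta' V(\delta_D)$, then combined with $u\le C V(\delta_D)$ one gets $v=w+t^*u\ge (t^*+\eta'/C)u$ in $D$ and also in $D^c$ (trivially), contradicting maximality of $t^*$. Hence $t^*\geq 1$, i.e. $v\geq u$ in $\Rd$.

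The main obstacle I anticipate is making the interior inequality $-\Psidel w\geq 0$ (and the subsequent application of Hopf's lemma) rigorous \emph{in the viscosity sense}: one must check that $w=v-t^*u$ is a viscosity supersolution of $-\Psidel w \ge (\text{something}\ge 0)$ given only that $v,u$ are viscosity sub/supersolutions of the nonlinear equations, which requires a careful touching-test-function argument since $\Psidel$ is nonlocal and sub/supersolutions do not subtract cleanly. The standard remedy is to pass through the Green-function representation \eqref{E2.3}: rewrite $u=\cG(\tilde g - g(\cdot,u)+\text{l.o.t.})$ and $v\ge\cG(-g(\cdot,v))$ pointwise, so that $w\ge \cG\bigl(g(\cdot,u)-g(\cdot,v)\bigr)$ and then run the scaling argument directly on this integral identity, where positivity of $G^D$ makes all the above inequalities elementary; Hopf's lemma then applies to the nonnegative potential $w$. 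I would also double-check the boundary comparison via the sharp $V(\delta_D)$-decay estimates \eqref{ET2.1B} and \eqref{ET2.2A}, which are exactly what is needed to start and to close the sliding argument.
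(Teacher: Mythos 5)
Your overall strategy coincides with the paper's: define $t^*=\sup\{t:\ tu\le v\ \text{in}\ D\}$, use \cref{T2.2} for $v$ and the boundary bound \eqref{ET2.1B} for $u$ to get $0<t^*<\infty$, and close the sliding argument by combining Hopf's lemma for $w=v-t^*u$ with $u\le \eta_1 V(\delta_D)$. However, the central differential inequality is derived with the wrong sign, and this breaks the middle of the argument. From $-\Psidel v\le -g(x,v)$ and $\Psidel u\le g(x,u)$, linearity gives $-\Psidel w=-\Psidel v+t^*\Psidel u\le -g(x,v)+t^*g(x,u)$, not $\ge$ as you claim. The subsequent chain is also reversed: since $t^*u<u$ where $u>0$ and $s\mapsto g(x,s)/s$ is strictly decreasing, one has $\frac{g(x,u)}{u}-\frac{g(x,t^*u)}{t^*u}<0$, the opposite of the positivity you assert; and the step $-v\,\frac{g(x,t^*u)}{t^*u}\ge -t^*u\,\frac{g(x,t^*u)}{t^*u}$ implicitly needs $g(x,t^*u)\le 0$, which is not assumed (in the application $g(x,s)=as-f(x,s)>0$ for small $s>0$). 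Finally, even if you arrived at $-\Psidel w+c(x)w\ge 0$, that is the wrong inequality for \cref{T2.2}: since $-\Psidel$ plays the role of $\Delta$, the Hopf/minimum principle there is stated for nonnegative functions with $-\Psidel w+c(x)w\le 0$, and a nonnegative \emph{subsolution} can vanish at interior points without vanishing identically.

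The monotonicity of $g(x,s)/s$ must be used in the other direction: $t^*g(x,u)=t^*u\,\frac{g(x,u)}{u}\le t^*u\,\frac{g(x,t^*u)}{t^*u}=g(x,t^*u)$, strictly where $u>0$, which shows that $t^*u$ is itself a supersolution, $-\Psidel(t^*u)+g(x,t^*u)\gneq 0$ in $D$. Subtracting this from $-\Psidel v+g(x,v)\le 0$ (via \cite[Lemma~5.8]{CS09}, which is exactly the device that legitimizes the viscosity subtraction you were worried about) yields
$$-\Psidel w+\frac{g(x,v)-g(x,t^*u)}{w}\,w\le 0\quad \text{in}\; D,$$
with a bounded coefficient by the local Lipschitz hypothesis. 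Now \cref{T2.2} applies in the correct form: $w\equiv 0$ is excluded because the supersolution inequality for $t^*u$ is strict somewhere, and $w\ge\eta'\,V(\delta_D)$ together with $u\le\eta_1 V(\delta_D)$ contradicts the maximality of $t^*$ exactly as you describe. So the skeleton and the endgame are right, but both the sign of the linearized inequality and the direction in which the monotonicity of $g(x,s)/s$ is exploited need to be reversed.
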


\begin{proof}
Let $\varrho = \sup \{ t : \; tu < v\; \text{in}\; D\}$.
 Clearly, $\varrho<\infty$. Also, $\varrho>0$. Note that
by Hopf's lemma, \cref{T2.2}, we have
$$\inf_{D}\frac{v(x)}{V(\delta_D(x))}\geq \eta>0,$$
and by \eqref{ET2.1B} 
\begin{equation}\label{EL3.1A}
\sup_{x\in D} \Big| \frac{u(x)}{V(\delta_D(x))}\Big|\leq \eta_1
\end{equation}
for some $\eta_1>0$. Thus for some small $t_0>0$ we would have $v>t_0 u$ in $D$, giving us $\varrho\geq t_0>0$.
To complete the proof it is enough to show that $\varrho\geq 1$. On the contrary, we suppose that $\varrho<1$.
Let $w=\varrho u$.
Since $ \frac{g(x, s)}{s}$ is strictly decreasing for $s > 0$ we have
\begin{align}\label{EL3.1B}
- \Psidel w + g(x,w) &=  -\Psidel w + \frac{g(x, \varrho u)}{\varrho} \varrho\nonumber
 \\
&\gneq  \varrho[ - \Psidel u + g(x, u)] \geq 0 \quad \text{in}\; D\,,
\end{align}
Applying \cite[Lemma~5.8]{CS09} we then have
$$
- \Psidel (v-w) + g(x,v) - g(x,w) \leq 0\quad \text{in}\; D,
$$
which in turn, gives 
$$
 - \Psidel (v-w) +\left(\frac{g(x,v) - g(x,w)}{v-w}\right) (v-w) \leq 0\quad \text{in}\; D.
$$
Applying Hopf's lemma, \cref{T2.2}, we have either $v-w =0$ in $\Rd$ or  
$\inf_{D}\frac{v(x)-w(x)}{V(\delta_D(x))} > \eta$. The first option is not possible due to \eqref{EL3.1B}.
Again, if the second option holds, then using \eqref{EL3.1A} we can find $t_1>0$ satisfying
$u-w> t_1 u$ in $D$ implying $v> (\varrho+t_1) u$ in $D$. This contradicts the definition of $\varrho$.
Hence we must have $\varrho\geq 1$.
\end{proof}
Now we are ready to prove \cref{T1.1}.
\begin{proof}[{\bf Proof of \cref{T1.1}}]
Recall that  $(\lambda_1, \varphi_1)$ is the Dirichlet principal eigenpair, that is,
\begin{equation}\label{ET1.1A}
\begin{split}
- \Psidel \varphi_1 + \lambda_1 \varphi_1 &= 0 \; \text{in} \; D,
\\
\varphi_1 &= 0 \; \text{in} \; D^c\,.
\end{split}
\end{equation}
Suppose that $a < \lambda_1$ and $v$ is a positive solution of \eqref{E-Lg}. Then
$$
- \Psidel v + av = \frac{f(x,v)}{v}v \geq 0\quad \text{in}\; D\,
$$
since $\frac{f(x,s)}{s} \geq 0$ for $s \geq 0$. Applying the refined maximum principle \cref{T2.3}
we get $v\leq 0$ in $\Rd$ which is a contradiction.

Similarly, if $v$ is a positive solution with $a=\lambda_1$,
we obtain $- \Psidel v + \lambda_1 v = \frac{f(x,v)}{v}v \geq 0$ in $D$. 
Applying second part of \cref{T2.3}
we have $v = t\varphi_1$ for some $t>0$ which would imply
$$
- \Psidel \varphi_1 + \lambda_1 \varphi_1 - t^{-1}f(x, t\varphi_1) = 0\quad \text{in}\; D, 
$$
giving us
$$
-f(x , t \varphi_1) = 0\quad \text{in}\; D\,.
$$
This is not possible since $t\varphi_1> 0$ in $D$. Thus we have established that no positive solution is possible
for $a\leq \lambda_1$.

Next we consider the case where $a>\lambda_1$. Existence of solution would be proved using a standard monotone 
iteration method. To do so we need to construct a subsolution and supersolution. Let $\underline{u}=k\varphi_1$ where $k\in (0, 1)$. Then we obtain from \eqref{ET1.1A} that
\begin{align*}
- \Psidel\underline{u} + a\underline{u} - f(x, \underline{u}) 
&= (a - \lambda_1) \underline{u} - f(x, \underline{u}) 
\\
&= \underline{u} \left( (a- \lambda_1) - \frac{f(x, k\varphi_1)}{k \varphi_1} \right)\quad \text{in}\; D\,.
\end{align*}
Since by mean value theorem $\frac{f(x,q)}{q} =  f_s(x,r)$ for some $r \in (0,q)$ and $f_s(x, 0)=0$, by
choosing $k$ small we would easily have 
$$\left( (a- \lambda_1) - \frac{f(x, k\varphi_1)}{k \varphi_1} \right)\,>\,0\quad  \text{in}\; D.$$
Thus we obtain a subsolution $\underline{u}$. Again, since 
$$\lim_{s\rightarrow \infty} \inf_{x \in D} \frac{f(x,s)}{s} = \infty\,, $$
there exist large $M > \norm{\underline{u}}_{\cC(D)}$ satisfying
$\frac{f(x,M)}{M} \geq a $ for all $x$ in $D$.
Fixing $v = M$ we get
\begin{align*}
- \Psidel v + av - f(x,v) &\leq  0 \quad \text{in}\; D\,.
\end{align*}
Thus $v$ is a super-solution. Now the existence
of a solution is standard using monotone iteration method. Let us just
sketch the argument. Define $H(x, u)=au-f(x, u)$ and let $ \theta > 0$ be a Lipschitz constant for $H(x, \cdot)$ on the interval $[ 0, M ]$, i.e.,
$$
\vert H(x, q_1) - H(x, q_2) \vert \leq \theta \vert q_1  - q_2 \vert \quad
 \text{for} \; q_1, q_2 \in  \left[ 0 , M \right], \;  x \in D\,.
$$
Now consider the solutions of the following family of problems:
\begin{align*}
-\Psidel u^{n+1} - \theta u^{n+1}  &=  - H(x,u^n)  - \theta u^n  \quad x \in D,
\\
u^{n+1} &= 0 \quad x \in D^c\,,
\end{align*}
with $u^0=\underline{u}$. It is standard to check that $u^0\leq u^1\leq u^2\leq \cdots\leq v$. Applying
\cref{T2.1} and Arzel\`{a}-Ascoli thereom it can be shown that the sequenece converges uniformly in $\Rd$ to
a limit $v_a\geq \underline{u}$ and $v_a$ is a viscosity solution to \eqref{E-Lg}. See \cite[Lemma~3.3]{B18a}
for more details. Uniqueness of solution to \eqref{E-Lg} follows from \cref{L3.1}.

Next we prove stability of the solution $v_a$ for $a> \lambda_1$. Note that given $a_2\geq a_1>\lambda_1$ we have
$$\Psidel v_{a_2} \geq a_1 v_{a_2}-f(x, v_{a_2})\quad \text{in}\; D\,.$$
Therefore, by \cref{L3.1}, we have $v_{a_1}\leq v_{a_2}$. Again, due to \cref{T2.1}, it can easily be shown that $a\mapsto v_a$ is continuous.

Fix $a> \lambda_1$ and define $w=(1+h)v_a$ for $h>0$. Since
$$(1+h) f(x, s)< f(x, (1+h)s)\quad \text{for}\; s\geq 0,\; x\in D\,,$$
we have
$$-\Psidel w + a w - f(x, w)\leq 0 \quad \text{in}\; D.$$
Using \cite[Lemma~5.8]{CS09} we then obtain
\begin{align*} 
-\Psidel (h v_a) + a (hv_a) - f(x, w)+f(x, v_a) &= -\Psidel(w-v_a) - a(w-v_a) - f(x, w) + f(x, v_a)
\\
&\leq 0 \quad \text{in}\; D.
\end{align*}
Dividing by $h$ on both sides we get
$$-\Psidel v_a + a v_a - \left[\frac{f(x, w)-f(x, v_a)}{h v_a}\right] v_a\leq 0\quad \text{in}\; D.$$
Letting $h\to 0$ and using the stability property of viscosity solutions \cite[Lemma~4.5]{CS09} we 
obtain
$$-\Psidel v_a + a v_a - f_s(x, v_a) v_a\leq 0\quad \text{in}\; D.$$
Then it follows from \eqref{E2.8} that the principal eigenvalue $\lambda^*$ of the operator 
$-\Psidel + a- f_s(x, v_a)$ is non-negative. Now suppose $\lambda^*=0$. Then from the proof of 
\cite[Theorem~3.2]{BL19} (see the last part of the proof) we get that $v_a$ is a principal eigenfunction i.e.,
$$-\Psidel v_a + a v_a - f_s(x, v_a) v_a= 0\quad \text{in}\; D.$$
Combining with \eqref{E-Lg} we have $f_s(x, v_a) v_a= f(x, v_a)$ for all $x\in D$. But by \eqref{A3}
we have $sf_s(x, s)-f(x, s)>0$ for all $s>0$. Thus we have a contradiction, giving us $\lambda^*>0$.
This completes the proof.
\end{proof}
For the remaining part of this section we  consider the equation with the harvesting term $h$:
\begin{equation}\label{E3.4}
\begin{split}
\Psidel u &= au- f(x,u)- c h(x,u) \quad \text{in}\; D\,,
\\
u &>0 \quad \text{in}\; D\,,
\\
u&=0 \quad \text{in}\; D^c\,,
\end{split}
\end{equation}
where $h$ satisfies the conditions in \eqref{A3}. We start with the following lemma about non-existence.

\begin{lemma}\label{L3.2}
The following hold.
\begin{enumerate}
\item[(i)] If $a \leq \lambda_1$ and $c \geq 0$ then equation \eqref{E3.4} has no non negative solution except $u=0$ when $c=0$.
\item[(ii)] Suppose that $\inf_{s\in [0, K]} h(\cdot, s)\gneq 0$ for any $K>0$.
Then for $a > \lambda_1$, there exists $M>0$ such that  equation \eqref{E3.4} has no 
nonzero non-negative solution when $c>M$.
\end{enumerate}
\end{lemma}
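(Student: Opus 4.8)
The plan is to dispose of part~(i) with the refined maximum principle \cref{T2.3}, and to prove part~(ii) by combining a $c$-independent a priori bound (obtained from the comparison principle \cref{L3.1}) with the Green-function representation \eqref{E2.3}. For part~(i), suppose $u\gneq 0$ with $u=0$ in $D^c$ solves the equation in $D$. Since $f,h\geq 0$ and $c\geq 0$, rewriting \eqref{E3.4} gives $-\Psidel u + a u = f(x,u)+c\,h(x,u)\geq 0$ in $D$. If $a<\lambda_1$, then for the constant coefficient $a$ one has $\lambda(a)\geq \lambda_1-a>0$ by \eqref{E2.8} (test with $\varphi_1$), so \cref{T2.3} (first part, $c(x)\equiv a$) yields $u\leq 0$, hence $u\equiv 0$; inserting this into the equation gives $0=-c\,h(x,0)$ in $D$, impossible when $c>0$ since $\max_{\bar D}h(\cdot,0)>0$ by \eqref{A3}, while for $c=0$ the function $u\equiv 0$ is indeed the unique solution. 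If $a=\lambda_1$ and $u\not\equiv 0$, then $-\Psidel u+\lambda_1 u\geq 0$ in $D$, $u=0$ in $D^c$, and $u(x_0)>0$ for some $x_0\in D$, so the second part of \cref{T2.3} (with $c(x)\equiv 0$, so that $\lambda(0)=\lambda_1$) forces $u=t\varphi_1$ for some $t>0$; the equation then reads $0=f(x,t\varphi_1)+c\,h(x,t\varphi_1)$ in $D$, hence $f(x,t\varphi_1)\equiv 0$, contradicting $f(x,s)/s>f_s(x,0)=0$ for $s>0$ (from \eqref{A3}). So $u\equiv 0$, which as before requires $c=0$.

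For part~(ii), fix $a>\lambda_1$ and suppose $u\gneq 0$ solves \eqref{E3.4} for some $c>0$; the first step is a bound on $\norm{u}_{\cC(D)}$ independent of $c$. Since $f,h\geq 0$ and $c\geq 0$, the identity $-\Psidel u+\bigl(au-f(x,u)\bigr)=c\,h(x,u)\geq 0$ in $D$ exhibits $u$ as the "sub" function in \cref{L3.1} for $g(x,s)=as-f(x,s)$; the hypotheses of \cref{L3.1} hold because $f(x,0)=0$, $f_s(x,0)$ is continuous, and $s\mapsto f(x,s)/s$ is strictly increasing by \eqref{A3}. Choosing a large constant $M$ with $f(x,M)/M\geq a$ on $\bar D$ — possible since $\lim_{s\to\infty}\inf_D f(x,s)/s=\infty$ — this constant is a supersolution of the same equation, so \cref{L3.1} applied with $v=M$ gives $u\leq M$ in $\Rd$. (Equivalently, $u\leq v_a$ with $v_a$ the logistic solution of \cref{T1.1}.) Thus $K\df\norm{u}_{\cC(D)}\leq M$ is bounded independently of $c$.

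Now set $\underline h(x)\df\inf_{s\in[0,K]}h(x,s)$; by continuity of $h$ on the compact set $\bar D\times[0,K]$ this is continuous on $\bar D$, and by hypothesis $\underline h\gneq 0$ in $D$. Since $\ell(x)\df au(x)-f(x,u(x))-c\,h(x,u(x))$ is continuous on $\bar D$, \eqref{E2.3} gives $u=\cG\ell$ on $D$; using $f\geq 0$, $u\leq K$ and $h(x,u(x))\geq\underline h(x)$ we get $\ell\leq aK-c\,\underline h$ on $D$, and the positivity and linearity of $\cG$ yield
\begin{equation*}
0\,\leq\, u(x)\,=\,\cG\ell(x)\,\leq\, aK\,\cG 1(x)-c\,\cG\underline h(x),\qquad x\in D.
\end{equation*}
Fix any $x_0\in D$. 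Then $\cG 1(x_0)=\Exp_{x_0}[\uptau]\in(0,\infty)$ (finite by \cref{T2.1}), while $\cG\underline h(x_0)=\int_D G^D(x_0,y)\underline h(y)\,\D y>0$ because $G^D(x_0,\cdot)>0$ in $D$ and $\underline h$ is nonnegative, continuous and strictly positive on a nonempty open subset of $D$. Hence $c\leq aK\,\cG 1(x_0)/\cG\underline h(x_0)\df M_0$, so \eqref{E3.4} has no nonzero nonnegative solution once $c>M_0$, which is the assertion with $M=M_0$.

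The only genuinely delicate point is the $c$-independent a priori bound in part~(ii): a pointwise maximum-principle argument for a viscosity solution is awkward, so the bound is read off instead from the comparison principle \cref{L3.1} applied against a constant supersolution (or against $v_a$). Everything else — verifying the structural hypotheses of \cref{L3.1} for $g(x,s)=as-f(x,s)$, the continuity of $\ell$, and the strict positivity of $\cG\underline h(x_0)$ — is routine.
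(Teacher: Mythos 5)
Your proof of part~(i) is the same as the paper's (the paper simply defers to the argument of \cref{T1.1}, which you reproduce explicitly). Your proof of part~(ii) departs from the paper's after the a~priori bound: the paper argues by contradiction, taking $c_n\to\infty$, dividing \eqref{E3.4} by $c_n$, and letting $n\to\infty$ in the viscosity sense to deduce $\inf_{s\in[0,K]}h(\cdot,s)\equiv 0$ in $D$, contradicting the hypothesis. You instead use the Green-function representation $u=\cG\ell$ from \eqref{E2.3} directly, deducing $c\,\cG\underline h(x_0)\leq aK\,\cG 1(x_0)$ for a fixed interior point. Both routes are correct and hinge on the same two inputs (the uniform $L^\infty$ bound via \cref{L3.1} and the strict positivity of $\underline h$ somewhere in $D$), but yours is direct, avoids passing to limits of viscosity solutions, and yields an explicit threshold. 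One small point to tidy: as written your constant $M_0=aK\,\cG1(x_0)/\cG\underline h(x_0)$ depends on $K=\norm{u}_{\cC(D)}$, which in turn depends on the particular solution $u$. To get a genuine threshold independent of $c$, replace $K$ throughout by the uniform bound $M$ (using $aK\leq aM$ and $\inf_{s\in[0,K]}h\geq\inf_{s\in[0,M]}h$, so $\cG\underline h_K(x_0)\geq\cG\underline h_M(x_0)>0$), giving $M_0=aM\,\cG1(x_0)/\cG\underline h_M(x_0)$.
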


\begin{proof}
First we consider (i). Note that 
$$-\Psidel u + a u = f(x,u)+ c h(x,u)\geq 0 \quad \text{in}\; D.$$
Then the arguments of \cref{T1.1} shows that there is no non-negative $u$ satisfying above equation when
$a\leq \lambda_1$.

(ii) Fix $a > \lambda_1$. We will prove theorem by contradiction. Assume that there exists positive 
increasing sequence $c_n \rightarrow \infty$ and solution  $u_n\gneq 0$ to \eqref{E3.4}.
We claim that for any non-negative solution $u$ to \eqref{E3.4}, we have
\begin{equation}\label{EL3.2A}
\Vert u \Vert_{L^{\infty}} \leq K,
\end{equation} 
for some $K$ and all $c\geq 0$.
Since $\lim_{s\rightarrow \infty} \inf_{x \in D} \frac{f(x,s)}{s} = \infty$ 
there exist large $K > 0$ such  that 
$\frac{f(x,K)}{K} \geq a $ for all x in $D$.
Taking $v = K$ we get
\begin{align*}
- \Psidel v + av - f(x,v) &= a K - f(x,K)
\\
&= K\left( a- \frac{f(x,K)}{K} \right) \leq 0\quad \text{in}\; D\,.
\end{align*}
So $v$ is a super-solution. Thus
$$ - \Psidel v + av - f(x,v) \leq  0 \leq ch(x,u) = - \Psidel u + au - f(x,u)\quad \text{in}\; D\,.
$$
Using \cref{L3.1} we obtain \eqref{EL3.2A}.
Now dividing both sides of \eqref{E3.4} by $c_n$ we have
\begin{align*}
&- \Psidel \left( \frac{u_n}{c_n} \right) + a \frac{u_n}{c_n} - \frac{f(x,u_n)}{c_n} + \min_{s\in [0, K]}h(x,s)
\\
&\leq - \Psidel \left( \frac{u_n}{c_n} \right) + a \frac{u_n}{c_n} - \frac{f(x,u_n)}{c_n} + h(x,u_n) = 0
\quad \text{in}\; D\,.
\end{align*}
Since $\frac{u_n}{c_n}$ and $\frac{f(x,u_n)}{c_n} $ converges to $0$ as $c_n \rightarrow \infty$ we get from above that
$\min_{s\in [0, K]}h(x,s)=0$ which is a contradiction. Hence the result.
\end{proof}

Next we prove existence of solution for small values of $c$.
\begin{lemma}\label{L3.3}
Fix $a > \lambda_1$. Then there exist $c_1$ such that for $c\in (0,c_1)$ equation \eqref{E3.4} has a  solution
$u$ satisfying $u\geq m\beta\varphi_1$ where $m, \beta$ are independent of $c\in (0, c_1)$.
\end{lemma}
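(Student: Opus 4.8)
The plan is to solve \eqref{E3.4} for small $c$ by the sub/supersolution method and to conclude by the monotone iteration used in the existence part of the proof of \cref{T1.1}: once an ordered pair $\underline{u}\le\bar u$ of a subsolution and a supersolution of \eqref{E3.4} is available, that iteration (which uses a Lipschitz bound $\theta$ for $u\mapsto au-f(x,u)-c\,h(x,u)$ on $[0,\norm{\bar u}_{\cC(D)}]$, the estimate \eqref{ET2.1A} and Arzel\`{a}--Ascoli; cf.\ \cite[Lemma~3.3]{B18a}) produces a solution $u$ with $\underline{u}\le u\le\bar u$. For the supersolution there is nothing to do: $\bar u=v_a$, the solution of the logistic equation \eqref{E-Lg} supplied by \cref{T1.1}, satisfies
\[
\Psidel v_a=a v_a-f(x,v_a)\ \ge\ a v_a-f(x,v_a)-c\,h(x,v_a)\qquad\text{in }D
\]
for every $c\ge0$ since $h\ge0$. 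So the whole task is to build a subsolution $\underline{u}>0$ with $\underline{u}\le v_a$ and $\underline{u}\ge m\beta\varphi_1$, with $m,\beta$ independent of $c$.

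The difficulty is that near $\partial D$ every multiple of $\varphi_1$ decays to $0$ while $h(\cdot,0)$ need not, so $\varphi_1$ alone cannot absorb the loss $-c\,h$ in a boundary layer. I would use the \emph{torsion function} $T(x)\df\Exp_x[\uptau]$, which by the Green-function representation (\cref{S-prelim}) solves $\Psidel T=1$ in $D$, $T=0$ in $D^c$, and which by \cref{T2.1,T2.2} is continuous with $\eta\,V(\delta_D)\le\varphi_1\le C\,V(\delta_D)$ and $0\le T\le C\,V(\delta_D)$ on $D$; in particular $T\le(C/\eta)\,\varphi_1$, and likewise $v_a\ge\eta'\varphi_1$ for some $\eta'>0$. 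Take
\[
\underline{u}\ \df\ \beta\,\varphi_1-c\,\gamma\,T ,
\]
with $\beta>0$ small and $\gamma>0$ large, to be chosen. By linearity of $\cG$ together with $\Psidel\varphi_1=\lambda_1\varphi_1$ (from \eqref{ET1.1A}) and $\Psidel T=1$ one has the identity in $D$
\[
-\Psidel\underline{u}+a\underline{u}-f(x,\underline{u})-c\,h(x,\underline{u})\ =\ \beta(a-\lambda_1)\varphi_1+c\gamma\bigl(1-aT\bigr)-f(x,\underline{u})-c\,h(x,\underline{u}),
\]
and the goal is to show the right-hand side is $\ge0$.

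Write $\omega(\beta)\df\sup_{x\in D,\,0<s\le\beta\norm{\varphi_1}_\infty}f(x,s)/s$; since $f(x,0)=f_s(x,0)=0$ and $f_s$ is continuous, $\omega(\beta)\to0$ as $\beta\to0$, so for $\beta$ small, $0\le f(x,\underline{u})\le f(x,\beta\varphi_1)\le\omega(\beta)\beta\varphi_1\le\tfrac12(a-\lambda_1)\beta\varphi_1$ on $D$; hence the first and third terms in the identity combine to $\ge\tfrac12(a-\lambda_1)\beta\varphi_1\ge0$, and it remains to control $c\gamma(1-aT)-c\,h(x,\underline{u})$, which I do by regions. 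Fix $\varepsilon>0$ so small that $aT\le\tfrac12$ on the collar $\{0<\delta_D<\varepsilon\}$ (possible since $T\le CV(\delta_D)$ vanishes at $\partial D$); there $c\gamma(1-aT)-c\,h\ge c\gamma/2-c\norm{h}_\infty\ge0$ once $\gamma\ge2\norm{h}_\infty$. On $\{\delta_D\ge\varepsilon\}$, where $\varphi_1\ge\vartheta>0$, the reserve $\tfrac12(a-\lambda_1)\beta\varphi_1\ge\tfrac12(a-\lambda_1)\beta\vartheta$ dominates $|c\gamma(1-aT)|+c\norm{h}_\infty\le c(\gamma a\norm{T}_\infty+\norm{h}_\infty)$ as soon as $c$ is below some $c_1=c_1(a,\beta,\gamma,\varepsilon,\norm{h}_\infty)$. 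Thus $\underline{u}$ is a subsolution of \eqref{E3.4} in $D$ for all $c\in(0,c_1)$. (Alternatively, replacing $\beta\varphi_1$ by $v_{\hat a}$ with $\hat a\in(\lambda_1,a)$ absorbs the $f$--term for free through $f(x,v_{\hat a})-f(x,\underline{u})\ge0$.)

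Finally, $T\le(C/\eta)\varphi_1$ gives $\underline{u}\ge(\beta-c\gamma C/\eta)\varphi_1$, so after shrinking $c_1$ to enforce $c\gamma C/\eta\le\beta/2$ we have $\underline{u}\ge\tfrac12\beta\varphi_1>0$ on $D$, and $\underline{u}\le\beta\varphi_1\le v_a$ once $\beta\le\eta'$. The monotone iteration between $\underline{u}$ and $\bar u=v_a$ then produces a solution $u$ of \eqref{E3.4} with $\tfrac12\beta\varphi_1\le\underline{u}\le u\le v_a$ in $\Rd$, which is the assertion with $m=\tfrac12$ and $\beta$ (fixed once all smallness requirements are imposed) independent of $c\in(0,c_1)$. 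I expect the subsolution — in particular the choice of the correction term $-c\gamma T$ — to be the only genuine obstacle: it is precisely this term that keeps $-\Psidel\underline{u}$ bounded below near $\partial D$ (indeed $-\Psidel\underline{u}=-\beta\lambda_1\varphi_1+c\gamma\to c\gamma$ as $\delta_D\to0$), so that the $O(c)$ harvesting loss is absorbed in the boundary layer, while away from $\partial D$ the ``price'' $aT$ of adding it is harmless for small $c$ because there $\varphi_1$ is bounded below. Every estimate above rests on the two-sided comparison $\varphi_1\asymp V(\delta_D)\asymp T\asymp v_a$, i.e.\ on the sharp boundary estimate \eqref{ET2.1B} and the Hopf lemma \cref{T2.2}.
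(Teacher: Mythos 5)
Your construction is correct and rests on the same core idea as the paper's: subtract a torsion-function correction from a small multiple of $\varphi_1$, with everything controlled by the two-sided comparison $\varphi_1\asymp V(\delta_D)\asymp T$ coming from \cref{T2.1,T2.2}. The execution, however, is noticeably more circuitous. The paper sets $\phi=m(\varphi_1-\varepsilon v)$ with $\varepsilon=(1-\beta)\eta_2/\eta_1$ \emph{fixed} (so that $\phi\ge m\beta\varphi_1$ automatically), producing a subsolution independent of $c$; then
\[
-\Psidel\phi+a\phi-f(x,\phi)-ch(x,\phi)\ \ge\ \Bigl(a-\tfrac{\lambda_1}{\beta}-\tfrac{f(x,\phi)}{\phi}\Bigr)\phi+m\varepsilon-c\norm{h}_{L^\infty},
\]
and a single choice $\beta\in(\lambda_1/a,1)$, then $m$ small, then $c_1=m\varepsilon\norm{h}_{L^\infty}^{-1}$, closes the inequality \emph{uniformly over $D$} with no region splitting, because the term $a\phi$ is never expanded and hence never releases an $-am\varepsilon v$ with the wrong sign. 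In your version the correction $c\gamma T$ scales with $c$; expanding $a\underline u$ then produces the troublesome $-ac\gamma T$, which forces the collar/interior decomposition. That decomposition works, but it is an extra step you could have avoided by keeping $a\underline u$ intact, bounding $-\beta\lambda_1\varphi_1\ge -\frac{\lambda_1}{1-\delta}\underline u$ after shrinking $c_1$ so that $\underline u\ge(1-\delta)\beta\varphi_1$, and choosing $\delta<1-\lambda_1/a$ — which is in effect exactly what the paper's fixed-coefficient choice accomplishes in one stroke. Two small points to tidy: you invoke $0\le f(x,\underline u)\le f(x,\beta\varphi_1)$ before establishing $\underline u\ge0$; reorder so the shrinkage $c\gamma C/\eta\le\beta/2$ (hence $\underline u\ge\tfrac12\beta\varphi_1>0$) is imposed first (or extend $f$ by zero for negative arguments). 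Also $|c\gamma(1-aT)|\le c\gamma(1+a\norm T_\infty)$, not $c\gamma a\norm T_\infty$. Using $v_a$ rather than the paper's constant $K$ as supersolution is fine and your verification of it is correct; the monotone-iteration step and the resulting bound $u\ge\tfrac12\beta\varphi_1$ with $m=\tfrac12$ match the statement.
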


\begin{proof}
We will prove existence of a positive solution using a monotone iteration method.
Let $v$ be the unique solution of 
\begin{align*}
 \Psidel v  &= 1 \quad \text{in} \; D\,,
 \\
v &= 0 \quad \text{in} \; D^c.
\end{align*}
From maximum principle it is evident that $v>0$ in $D$.
Also, recall the principal eigenfunction $\varphi_1$ from \eqref{ET1.1A}.
Using \cref{T2.1,T2.2} we obtain that
\begin{equation}\label{EL3.3A}
\Big|\frac{v(x)}{V(\delta_D(x))}\Big|\leq \eta_1,
\quad \eta_2\leq \frac{\varphi_1(x)}{V(\delta_D(x))}\leq \eta_3, \; x\in D\,,
\quad \eta_1,\, \eta_2,\, \eta_3>0\,.
\end{equation}
 Thus
\begin{align*}
\varphi_1(x) \geq \frac{\eta_2}{\eta_1} v(x)\quad x\in D\,.
\end{align*}
Taking $\varepsilon(\beta) = (1- \beta) \frac{\eta_2}{\eta_1}$ we get
$\varphi_1 - \varepsilon v \geq \beta \varphi_1$.
Define $\phi = m (\varphi_1 - \varepsilon v)$. Note that $\phi \geq m \beta \varphi_1$. Now
\begin{align*}
- \Psidel \phi + a \phi - f(x,\phi) -c h(x, \phi) 
&= - \lambda_1 m \varphi_1 + m \varepsilon + a \phi - f(x,\phi) -ch(x, \phi)
\\
&\geq - \frac{\lambda_1}{\beta} \phi  + a \phi - f(x,\phi) + m \varepsilon -c
\norm{h}_{L^\infty}
\\
&\geq \left( a - \frac{ \lambda_1}{\beta} - \frac{f(x, \phi)}{\phi} \right) \phi +m\varepsilon - c\norm{h}_{L^\infty}\,.
\end{align*}
Now choose $ \beta\in(\frac{\lambda_1}{a}, 1)$ and then choose $m$ small so that 
$$\frac{f(x, \phi)}{\phi} \leq a - \frac{ \lambda_1}{\beta}\quad \text{in}\; D .$$ 
Then for any 
$c \leq m \norm{h}_{L^\infty}^{-1}\varepsilon = \norm{h}_{L^\infty}^{-1} (1 -\beta)\frac{\eta_2}{\eta_1} m\df c_1 $ we have 
$$
-\Psidel \phi + a \phi - f(x,\phi) -c h(x,\phi) \geq 0\quad \text{in}\; D\,.
$$
Thus we have a  subsolution for all $c \leq c_1$. Again, as shown in \cref{L3.2}, we can choose a
$K$ to serve a supersolution. Then using a standard monotone iteration method (same as in \cref{T1.1})
we can obtain a solution $u$ to \eqref{E3.4} satisfying $u\geq \phi$.
\end{proof}

Using \cref{L3.2,L3.3} we obtain the following.
\begin{theorem}\label{T3.1}
Assume the setting of \cref{T1.2}.
Suppose that $a > \lambda_1$. Then there exists $c_\circ \geq c_1$ such that 
\begin{enumerate}
\item[(i)] for $0 < c < c_\circ$, \eqref{E3.4}  has a maximal positive solution $u_1 (x, c)$ such that for any solution $v(x, c)$ of \eqref{E3.4} we have $u_1 \geq v$. Furthermore,
\begin{equation}\label{ET3.1A}
\lim_{c\to 0+}\norm{u_1(\cdot, c)-v_a}_{\cC(D)}=0;
\end{equation}
\item[(ii)] for $c > c_\circ$, \eqref{E3.4} has no positive solution.
\end{enumerate}
\end{theorem}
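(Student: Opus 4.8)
The plan is to assemble the maximal-solution statement from a monotone iteration started at the supersolution, using the comparison lemma \cref{L3.1} to run the iteration downward, and then to use \cref{L3.3} to guarantee the iteration stays above a positive barrier for small $c$. First I would fix $a>\lambda_1$ and recall from \cref{L3.2} that there is a constant $K$ (with $K>\norm{v_a}_{\cC(D)}$, enlarging if needed) such that the constant $v\equiv K$ is a supersolution of \eqref{E3.4} for every $c\geq 0$; this gives the a priori bound $\norm{u}_{L^\infty}\leq K$ for any nonnegative solution. Define $c_\circ = \sup\{c>0 : \eqref{E3.4}\text{ has a positive solution}\}$. By \cref{L3.3} we have $c_\circ \geq c_1>0$, and by \cref{L3.2}(ii) we have $c_\circ<\infty$, so $c_\circ\in[c_1,\infty)$ is well defined; part (ii) is then immediate from the definition of $c_\circ$.

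For part (i), fix $c\in(0,c_\circ)$ and pick $c'\in(c,c_\circ)$ for which a positive solution $w$ of \eqref{E3.4} with parameter $c'$ exists. Since $c'>c$ and $h\geq 0$, $w$ satisfies $-\Psidel w + aw - f(x,w) - c\,h(x,w) = (c'-c)h(x,w)\geq 0$, so $w$ is a subsolution of \eqref{E3.4}. Also $K\geq w$ by the comparison lemma \cref{L3.1} applied with $g(x,s)=f(x,s)-as$ (which satisfies the hypotheses of \cref{L3.1} since $f(x,s)/s$ is strictly increasing, hence $g(x,s)/s$ is strictly decreasing, $g(x,0)=0$, $g_s(x,0)=-a$ continuous). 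Now run the monotone iteration exactly as in the proof of \cref{T1.1}: with $H(x,u)=au-f(x,u)-c\,h(x,u)$, choose a Lipschitz constant $\theta$ for $H(x,\cdot)$ on $[0,K]$, set $u^0=K$, and solve $-\Psidel u^{n+1}-\theta u^{n+1} = -H(x,u^n)-\theta u^n$ in $D$, $u^{n+1}=0$ in $D^c$. Then $K=u^0\geq u^1\geq u^2\geq\cdots\geq w>0$, so the sequence decreases and stays positive; \cref{T2.1} plus Arzelà–Ascoli give uniform convergence to a viscosity solution $u_1(\cdot,c)$ of \eqref{E3.4} with $u_1\geq w>0$. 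Maximality follows because the same construction starting from $u^0=K$ lies above \emph{any} positive solution $v$ of \eqref{E3.4} (again by \cref{L3.1}, $v\leq K=u^0$, and then $v\leq u^n$ for all $n$ by induction using the monotonicity of the iteration map), so $u_1\geq v$.

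It remains to prove the limit \eqref{ET3.1A}. The idea is a squeeze between $v_a$ from above (up to a small error) and the subsolution barrier of \cref{L3.3} from below. For the upper bound, note that for any $\varepsilon>0$ the function $(1+\varepsilon)v_a$ satisfies $-\Psidel((1+\varepsilon)v_a) + a(1+\varepsilon)v_a - f(x,(1+\varepsilon)v_a) \leq 0$ (by the superlinearity $(1+\varepsilon)f(x,s)<f(x,(1+\varepsilon)s)$ used in \cref{T1.1}), hence it is a supersolution of \eqref{E3.4} for every $c\geq 0$; since $u_1$ is obtained by a downward iteration from $K\geq (1+\varepsilon)v_a$ (enlarging $K$ if necessary) the same comparison argument gives $u_1(\cdot,c)\leq (1+\varepsilon)v_a$, but this bound is not quite tight as $\varepsilon$ is fixed. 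The cleaner route, which I expect to be the main obstacle, is to show $\limsup_{c\to0+}u_1(\cdot,c)\leq v_a$ and $\liminf_{c\to0+}u_1(\cdot,c)\geq v_a$ directly: by the uniform bound $\norm{u_1(\cdot,c)}_{L^\infty}\leq K$ and \cref{T2.1}, the family $\{u_1(\cdot,c)\}_{c\in(0,c_1)}$ is precompact in $\cC(D)$, so along any sequence $c_k\to0+$ one extracts a uniform limit $\bar u\geq 0$ which, by the stability of viscosity solutions \cite[Lemma~4.5]{CS09} and the continuity of $h$ (with $h$ bounded so $c_k h(x,u_1(\cdot,c_k))\to0$ uniformly), solves the logistic equation \eqref{E-Lg} — provided one first rules out $\bar u\equiv 0$. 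Ruling that out is exactly where \cref{L3.3} enters: $u_1(\cdot,c)\geq m\beta\varphi_1$ for all $c\in(0,c_1)$ with $m,\beta$ independent of $c$, so $\bar u\geq m\beta\varphi_1\gneq 0$, and then by the uniqueness in \cref{T1.1} we must have $\bar u = v_a$. Since every subsequential limit equals $v_a$, the full limit \eqref{ET3.1A} follows. I would also remark that the monotonicity $c\mapsto u_1(\cdot,c)$ being decreasing (immediate from \cref{L3.1}, since a solution at $c'<c$ is a supersolution at $c$) makes the limit argument even cleaner, as one then only needs $\sup_{c}u_1(\cdot,c)\leq v_a$ via the supersolution comparison and the lower barrier from \cref{L3.3}.
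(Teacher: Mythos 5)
Your proof is correct and follows essentially the same route as the paper: define $c_\circ$ as the supremum of admissible parameters, use a solution at some $c'>c$ as a subsolution for the downward monotone iteration, invoke \cref{L3.3} to bound $u_1(\cdot,c)\geq m\beta\varphi_1$ from below, and pass to the limit via \cref{T2.1}, precompactness, and the uniqueness in \cref{T1.1}. Two small remarks. First, when you invoke \cref{L3.1} to get $K\geq w$ you wrote $g(x,s)=f(x,s)-as$, but then $g(x,s)/s=f(x,s)/s-a$ is strictly \emph{increasing}, which violates the hypothesis of \cref{L3.1}; the correct choice is $g(x,s)=as-f(x,s)$, after which the roles of sub- and supersolution match what you wrote. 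Second, the paper applies \cref{L3.1} with $v_a$ as the supersolution to conclude directly that $u\leq v_a$ for \emph{every} nonnegative solution $u$ of \eqref{E3.4}, and then runs the downward iteration from $v_a$ rather than from the constant $K$; this gives the sandwich $m\beta\varphi_1\leq u_1(\cdot,c)\leq v_a$ immediately and slightly streamlines the final limit argument, but your variant (iterating from $K$) is equivalent.
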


\begin{proof}
(i)\;
From \cref{T1.1}, we know that \eqref{E-Lg} has a unique positive solution 
$v_a$ when $a > \lambda_1$. Let $u$ be any nonnegative solution of \eqref{E3.4}. Then 
$$
-\Psidel v_a + a v_a - f(x, v_a) = 0 < ch(x,u) = -\Psidel u + au - f(x,u)\quad \text{in}\; D\,.
$$
Since $u = v_a =0$ in $D^c$, using \cref{L3.1} we have that $u \leq v_a$ in $\Rd$.
Thus whenever \eqref{E3.4} has a nonnegative solution for some $c$, we can construct maximal solution of 
$u_1 (\cdot , c)$ for the same parameter $c$ as follows:  we take $v_a$ as a supersolution of \eqref{E3.4}, any solution $u$ as a subsolution, and start the monotone iteration sequence starting from $v_a$. Then we obtain a solution $u_1$ in between $v_a$ and $u$; in particular, $u_1 \geq u$. Since u can be any solution, the limit of the iterated sequence starting from $v_a$ is the maximal solution. 
$$ 
c_\circ = \sup\{ c > 0 : \eqref{E3.4} \text{ has a solution with this}\, c \}. $$
From \cref{L3.3} it is clear that $c_\circ\geq c_1$. Now we show that for any 
$c \in (0 , c_\circ)$, \eqref{E3.4} has a solution. Then from previous argument we can construct maximal solution for any $c \in (0, c_\circ)$. Fix $c \in (0 , c_\circ)$. By definition of $c_\circ$
we can find $c' > c$ such that  \eqref{E3.4} has a solution $u$ for $c'$. This also implies
\begin{align*}
- \Psidel u + au - f(x,u) - c h(x,u) & \geq 0 \quad \text{in}\; D\,.
\end{align*}
Since $v_a\geq u$ using a monotone iteration argument we can find a solution for the parameter $c$.
Now to show \eqref{ET3.1A} we observe from \cref{L3.3} that for $c\in (0, c_1)$
\begin{equation}\label{ET3.2B}
m \beta \varphi_1\leq u_1(x, c)\leq v_a\quad \text{in}\; D.
\end{equation}
Applying \cref{T2.1} we see that the family $\{u_1(\cdot, c)\}_{c\leq c_1}$ is equicontinuous and any limit point
$\xi\in\cC(\Rd)$ ,as $c\to 0+$, would solve 
$$\Psidel \xi = a\xi- f(x,\xi)\quad \text{in}\; D.$$
From \eqref{ET3.2B} it follows that $\xi>0$ in $D$. Thus, by \cref{T1.1}, $\xi=v_a$. This gives us \eqref{ET3.1A}.

(ii) follows from the definition of $c_\circ$.
\end{proof}

Now we complete the proof of \cref{T1.2}. By $\cC_0(D)$ we denote the space of
all continuous functions in $\bar{D}$ vanishing on the boundary.

\begin{proof}[{\bf Proof of \cref{T1.2}}]
(i) and (ii) follows from \cref{T3.1}. So we consider (iii).
The main idea of this proof is to use \cref{T2.5} but due to lack of appropriate Schauder type estimate we can
not apply the theorem on the forward operator.
Recall the Green operator $\cG$ associated to the Dirichlet problem \eqref{E2.1},
that is,
\begin{equation}\label{E-Gr}
\cG f(x) \df\int_D G^D(x, y) f(y) \, \D{y}=\Exp_x\left[\int_0^{\uptau} f(X_t) \D{t}\right].
\end{equation}
In view of \eqref{ET2.1A}, $\cG:\cC_0(D)\to \cC_0(D)$ is a compact, bounded linear operator.
Now extend $h$ on $\bar{D}\times\RR$ by defining
$h(x, s)=h(x, 0)+sh_s(x, 0)$. Then $s\mapsto h(x, s)$ is $\cC^1$. 
We define $F:\RR\times \cC_0(D)\to \cC_0(D)$ by
$$F(c, u)=\cG(a u - f(x,u)-ch(x, u) ) -u.$$
Since $\cG$ is linear, it is clear that $F$ is continuously differentiable in a neighbourhood of $(0, 0)$.
In particular,
$$DF(c, u)(c_1, w) = \cG(aw-f_s(x, u)w-c_1 h(x, u)-ch_s(x,u)w)-w.$$
Also, $F(0, 0)=0$.
Define $Tw\df\, F_u(0, 0) w= \cG(a w)-w$. It is clear that $T$ is a bounded linear operator. Furthermore,
$Tw=0$ implies $\cG(aw)=w$ giving us
$w\in \cC_0(D)$ and $-\Psidel w + aw=0$. Since $a$ is not an eigenvalue, 
we must have $w=0$. Thus $T$ is injective. Since $\cG$ is  compact,
by Fredholm alternative on Banach spaces $T$ is also surjective and $T^{-1}$ is also bounded linear.
Therefore, we can apply the implicit theorem \cref{T2.5}
to obtain  a $\cC^1$ curve $(c, z(c))$ in $(-\varepsilon, \varepsilon)$,
 with $z(0)=0$ and $F(c, z(c))=0$. In other words,
\begin{equation}\label{ET1.23A}
\begin{split}
\Psidel z(c) &= az(c)- f(x,z(c))- c h(x,z(c)) \quad \text{in}\; D\,,
\\
z(c) &=0 \quad \text{in}\; D^c\,.
\end{split}
\end{equation}
To complete the proof we only need to show that there exists $\tilde{c}$ such that $z(c)>0$ in $D$.
Considering $c=0$ and $f(x)=-h(x, 0)$ in \cref{T2.4} we choose the corresponding $\delta$ from 
\cref{T2.4}. Fix $a\in (\lambda_1, \lambda_1+\delta)$. Since $c\mapsto z(c)$ is $\cC^1$ we have
$\frac{1}{|c|}\norm{z(c)}_{\cC(D)}\leq K$ for some $K$ and all small $c$. Defining $U_c=\frac{z_c}{c}$ we 
obtain from \eqref{ET1.23A} that
\begin{equation}\label{ET1.23B}
\begin{split}
\Psidel U_c &= a U_c- F_c(x) U_c-  h(x,z(c)) \quad \text{in}\; D\,,
\\
U_c &=0 \quad \text{in}\; D^c\,,
\end{split}
\end{equation}
where $F_c(x)=\frac{f(x, z(c))}{z(c)}$. Note that the rhs of \eqref{ET1.23B} is uniformly bounded for
all $c$ small. Thus applying \cref{T2.1} we find that $\{U_c\}, \{\frac{U_c}{V(\delta_D)}\}$ are uniformly
H\"{o}lder continuous in $D$. In particular, the sequences are pre-compact. Now suppose that there exists $c_n\to 0$ such that $z(c_n)\ngtr 0$ in $D$. Then we can extract a subsequence $n_k$ satisfying
\begin{equation}\label{ET1.23C}
\sup_{x\in D} \Big|\frac{U_{c_{n_k}}(x)}{V(\delta_D(x))}-\frac{W(x)}{V(\delta_D(x))}\Big|\to 0, 
\text{as}\; n_k\to 0,
\end{equation}
for some $W\in\cC_0(D)$.  Furthermore, from the stability property of viscosity solution 
\cite[Corollary~4.7]{CS09} we obtain
$$\Psidel W = a W-  h(x,0) \quad \text{in}\; D\,,\quad W=0\quad \text{in}\; D^c.$$
Using \cref{T2.4} we have $W>0$ in $D$ and $\inf_D \frac{W}{V(\delta_D)}>0$. From \eqref{ET1.23C} we then
have $U_{c_{n_k}}>0$ in $D$ for all large $n_k$ which contradicts the fact $z(c_n)\ngtr 0$ in $D$ for all $n$.
Hence we can find $\tilde{c}>0$ such that $u_2(c)\df z(c)>0$ in $D$. Moreover, 
$$\lim_{c\to 0+}\norm{u_2(c)}_{\cC(D)}=0.$$

(iv) Suppose, on the contrary, that there exists a sequence $c_n\to 0$ and solutions $v(\cdot,c_n)$ of \eqref{E3.4}
corresponding to $c_n$ and $v(\cdot,c_n)\neq u_1(\cdot, c_n)$ and $v(\cdot,c_n)\neq u_2(\cdot, c_n)$. To simplify the
notation we denote by $v^n=v(\cdot,c_n), u^n_1=u_1(\cdot,c_n), u_2^n=u_2(\cdot, c_n)$. Since, by \cref{T2.1}, $\{v^n\}$ is equi-conitnuous, from \cref{T1.1} one of the following hold.
\begin{itemize}
\item[(a)] There exists a subsequence $\{n_k\}$ satisfying $\norm{v^{n_k}-v_a}_{\cC(D)}=0$, as
$n_k\to\infty$.
\item[(b)] There exists a subsequence $\{n_k\}$ satisfying $\norm{v^{n_k}}_{\cC(D)}=0$, as
$n_k\to\infty$.
\end{itemize}
We arrive a contradiction below in each of the cases. Consider (a) first. Since 
$u^n_1$ is the maximal solution we have $v^n\leq u^n_1\leq v_a$. Thus, by \cref{T3.1}, we have
$$\lim_{n_k\to\infty}\, \norm{u^{n_k}_1-v^{n_k}}_{\cC(D)}=0.$$
Defining $w^{n}=u^{n}_1-v^{n}$ and using \eqref{E3.4} we get
\begin{equation}\label{ET1.2D}
\Psidel w^{n_k}=a w^{n_k} - \frac{f(x, u^{n_k}_1)-f(x, v^{n_k})}{w^{n_k}} w^{n_k}
- c_{n_k} \frac{h(x, u^{n_k}_1)-h(x, v^{n_k})}{w^{n_k}} w^{n_k}\quad \text{in}\; D\,.
\end{equation} 
Since $w^{n_k}\gneq 0$ in $D$, by \cref{T2.2}, we have $w^{n_k}>0$ in $D$. Normalize
$w^{n_k}$ by defining $\xi^{n_k}=\frac{1}{\norm{w^{n_k}}_{\cC(D)}}w^{n_k}$. From \eqref{ET1.2D}
we then have
\begin{equation}\label{ET1.2E}
\begin{split}
\Psidel \xi^{n_k} &=a \xi^{n_k} - \frac{f(x, u^{n_k}_1)-f(x, v^{n_k})}{w^{n_k}} \xi^{n_k}
- c_{n_k} \frac{h(x, u^{n_k}_1)-h(x, v^{n_k})}{w^{n_k}} \xi^{n_k}\quad \text{in}\; D\,,
\\
\xi^{n_k}&=0\quad \text{in}\; D^c\,,
\\
\xi^{n_k}&>0\quad \text{in}\; D\,.
\end{split}
\end{equation}
Using \cref{T2.1}, we see that $\{\xi^{n_k}\}$ is equicontinuous and then
 passing to the limit 
along some subsequence and using stability property of the viscosity solution in \eqref{ET1.2E}, we find a solution
$\xi\in\cC(\Rd)$ with $\xi>0$ in $D$ (due to \cref{T2.2}) satisfying
\begin{equation*}
\begin{split}
\Psidel \xi &=a \xi - f_u(x, v_a) \xi \quad \text{in}\; D\,,
\\
\xi&=0\quad \text{in}\; D^c\,,
\\
\xi&>0\quad \text{in}\; D\,.
\end{split}
\end{equation*}
But this contradicts the fact $v_a$ is a stable solution (see \cref{T1.1}). Thus (a) is not possible.
So we consider (b). Defining $w^n=u^n_2-v^n\neq 0$ and $\xi^n=\frac{1}{\norm{w^n}_{\cC(D)}} w^n$ and 
repeating a similar argument as above, we get a non-zero $\xi$ satisfying
\begin{equation*}
\Psidel \xi =a \xi  \quad \text{in}\; D\,,\quad 
\xi=0\quad \text{in}\; D^c\,,
\end{equation*}
which is a contradiction since $a$ is not an eigenvalue of $\Psidel$. Thus (b) is also not possible.
This completes the proof of the theorem.
\end{proof}

Next we establish \cref{C1.1}
\begin{proof}[{\bf Proof of \cref{C1.1}}]
First we show existence. Recall from 
\cref{L3.3} and \cref{T1.2}(i) that for any $c<c_1$ there exists a maximal solution
$u_1(c)=u_1(\cdot, c)$ of 
\begin{equation}\label{EC1.1C}
\begin{split}
\Psidel u &= au- f(x,u)- c h(x,u) \quad \text{in}\; D\,,
\\
u &>0 \quad \text{in}\; D\,,
\\
u&=0 \quad \text{in}\; D^c\,,
\end{split}
\end{equation}
satisfying $m \beta\varphi_1\leq u_1(c)\leq v_a$. Using \cref{T2.1} we see that 
$\{u_1(c)\}_{c<c_1}$, $\{\frac{u_1(c)}{V(\delta_D)}\}_{c<c_1}$ are equi-continuous 
family of positive functions. Since any subsequential limit of $\{u_1(c)\}_{c<c_1}$
, as $c\to 0$, would be a positive solution to \eqref{E-Lg} (by stability property 
of viscosity solutions), from \cref{T1.1} we obtain that
\begin{equation}\label{EC1.1D}
\lim_{c\to 0+}\,
\sup_D\,\Big|\frac{u_1(x,c)}{V(\delta_D)}-\frac{v_a(x)}{V(\delta_D)}\Big|=0.
\end{equation}
Since $\inf_D \frac{v_a(x)}{V(\delta_D(x))}>0$ by \cref{T2.2}, using \eqref{EC1.1D}
and \eqref{EC1.1A} we can find $c_2>0$ so that for every $c\in (0, c_2)$ we have
$$\lambda_1 u_1(c)\geq c h(x, u_1), \quad \text{in}\; x\in \Rd.$$

Next we show uniqueness. We claim that if
$w(c)=w(\cdot, c)$ be any solution to \eqref{EC1.1C} satisfying \eqref{EC1.1B}, then there 
exists $c_3>0$ and $\delta>0$ satisfying
\begin{equation}\label{EC1.1E}
\inf_{c\in (0, c_3)}\sup_{\Rd} w(c)>0\,.
\end{equation}
If this does not hold, then for a sequence $\{c_n\}, c_n\to 0,$ we would have
$\sup_{\Rd} w(c_n)= \sup_{D} w(c_n)\to 0$ as $n\to\infty$. 
Denote by $2\eta=a-2\lambda_1$. Using \eqref{A3}, we obtain
$$\eta w(c_n)- \frac{f(x, w(c_n))}{w(c_n)} w(c_n)>0, \quad \text{in}\; D\,,$$
for all large $n$. Hence, using \eqref{EC1.1B}, we obtain for all large $n$ that
\begin{align*}
-\Psidel w(c_n) + (\lambda_1+\eta) w(c_n)
&\leq -\Psidel w(c_n) + (\eta+2\lambda_1) w(c_n) - c_n h(x, w(c_n))
\\
&\leq -\Psidel w(c_n) + a w(c_n) - f(x, w(c_n))-c_n h(x, w(c_n))=0
\quad \text{in}\; D.
\end{align*}
But this contradicts the definition of $\lambda_1$ in \eqref{Eigen}. This gives us
\eqref{EC1.1E}. This also confirms that $\lim_{c\to 0} w(c)=v_a$. Then uniqueness follows from the argument of \cref{T1.2}(iv) (see situation (a) there).
\end{proof}

\section{Proof of \cref{T1.3}}\label{S-T1.3}
We prove \cref{T1.3} in this section. Also, we recall the subordinate Brownian motion $\{X\}$,
introduced in \cref{S-Psi}, defined on a complete filtered probability space
$(\Omega, \sF, \{\sF_t\}, \Prob)$ and $\{X_t, \sF_t\}$ is also a strong Markov process.
Most of the tools used in this section are probabilistic nature because
we use the potential theoretic interpretation of the solution.  Recall that
$u\in \cC(\Rd\times [0, T])$ is said to be a solution to 
\begin{equation}\label{E4.1}
\begin{split}
(\partial_t-\Psidel) u + \ell(x, t) &=\, 0 \quad \text{in}\; D\times [0, T),
\\
u(x, T)\,=\, g(x)\; \; \text{and}\; u(x, t)&=\, 0\quad \text{in}\;  D^c\times[0, T],
\end{split}
\end{equation}
if
\begin{equation}\label{E4.2}
u(x, t)=\Exp_x[g(X_{T-t})\Ind_{\{T-t<\uptau\}}] + \Exp_x\left[\int_0^{(T-t)\wedge\uptau} \ell(X_s, t+s) ds\right],
\quad (x, t)\in D\times[0, T]\,,
\end{equation}
where $\uptau$ denotes the first exit time of $X$ from $D$. Throughout this section
we assume that $g\in\cC_0(D)$.
 It is important to observe that \eqref{E4.2} is not
different from a viscosity solution. We recall the definition of viscosity solution. By
$\cC^{2, 1}_b(x, t)$ we denote the space of all bounded continuous functions in $\Rd\times[0, T]$ that are
in $\cC^{2, 1}$ class in some neighbourhood of $(x, t)$. The following definition of viscosity solution
can be found in \cite{CLD,Sil}.
\begin{definition}
An upper (lower) semicontinuous function $u$ is said to be a viscosity subsolution (supersolution)
 of \eqref{E4.1} if for every $(x, t)\in D\times [0, T)$ and $\varphi\in \cC^{2, 1}_b(x, t)$ satisfying
$$\varphi(x, t)=u(x, t),\quad \varphi(y, s)\geq u(y, s) \quad \text{for}\; y\in \Rd, \; t\leq s< t+\delta,$$

$$\left(\varphi(x, t)=u(x, t),\quad \varphi(y, s)\leq u(y, s) \quad \text{for}\; y\in \Rd, \; t\leq s< t+\delta, respectively,\right)$$
for some $\delta>0$, we have
$$ (\partial_t-\Psidel)\varphi(x, t)+\ell(x, t)\geq 0,$$
$$\left((\partial_t-\Psidel)\varphi(x, t)+\ell(x, t)\leq 0, \; respectively\right).$$
\end{definition}
The time derivative $\partial_t$ can also be replaced by the derivative in parabolic topology i.e.,
$$\partial_{t^+}\varphi(x, t)=\lim_{h\to 0+}\frac{\varphi(x, t+h)-\varphi(x, t)}{h}.$$
Let us first show that potential theoretic solution is also a viscosity solution.

\begin{lemma}\label{L4.1}
Let $u\in\cC_b(\Rd\times[0, T])$ satisfy \eqref{E4.2}. Assume that $\ell, g$ are continuous.
Then $u$ is the unique viscosity solution of 
\eqref{E4.1}.
\end{lemma}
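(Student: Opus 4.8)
The plan is to verify three things for the function $u$ defined by \eqref{E4.2}: that it realizes the exterior and terminal data, that it is a viscosity sub- and supersolution of the interior equation on $D\times[0,T)$, and that these properties determine it uniquely (continuity and boundedness of $u$ are part of the hypothesis, so need not be argued). First I would read off the boundary behaviour directly from \eqref{E4.2}. For $x\in D^c$ one has $\uptau=0$ $\Prob_x$-a.s., so both terms vanish and $u\equiv 0$ in $D^c\times[0,T]$; for $x\in D$, $\uptau>0$ $\Prob_x$-a.s.\ and hence $u(x,T)=g(x)$; and since $D$ is a $\cC^{1,1}$ domain every boundary point is regular for $X$, i.e.\ $\uptau=0$ $\Prob_x$-a.s.\ for $x\in\partial D$, which gives $u(x,T)=0=g(x)$ because $g\in\cC_0(D)$. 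The main structural tool is the dynamic programming principle: with $\tau'=(T-t)\wedge\uptau$, for every stopping time $\sigma\le\tau'$ one has
\begin{equation*}
u(x,t)=\Exp_x\bigl[u(X_\sigma,\,t+\sigma)\bigr]+\Exp_x\Bigl[\int_0^{\sigma}\ell(X_s,\,t+s)\,\D s\Bigr].
\end{equation*}
This follows from \eqref{E4.2} by splitting $\int_0^{\tau'}=\int_0^{\sigma}+\int_{\sigma}^{\tau'}$ and conditioning on $\sF_\sigma$: on $\{\sigma=\tau'\}$ the identity reduces to the definition of $u$ (using that $u(\cdot,T)$ vanishes on $\partial D$ and that $X_{\uptau}\in D^c$ $\Prob_x$-a.s.), while on $\{\sigma<\tau'\}$ one has $X_\sigma\in D$ and $t+\sigma<T$, so the strong Markov property at $\sigma$, together with the additivity $\uptau=\sigma+\uptau\circ\theta_\sigma$, identifies the $\sF_\sigma$-conditional expectation of the remaining terms with $u(X_\sigma,t+\sigma)$.

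Next I would derive the viscosity inequalities from this principle. Fix $(x_0,t_0)\in D\times[0,T)$ and a test function $\varphi\in\cC^{2,1}_b(x_0,t_0)$ touching $u$ from above, i.e.\ $\varphi(x_0,t_0)=u(x_0,t_0)$ and $\varphi(y,s)\ge u(y,s)$ for all $y\in\Rd$ and $t_0\le s<t_0+\delta$. Choose a ball $B_\rho(x_0)\subset D$ on a neighbourhood of whose closure $\varphi$ is $\cC^{2,1}$, and let $\sigma_r$ be the first exit time of $s\mapsto(X_s,t_0+s)$ from $B_\rho(x_0)\times[t_0,t_0+r)$ with $r<\rho\wedge\delta\wedge(T-t_0)$; then $\sigma_r\le\uptau\wedge(T-t_0)$, so the dynamic programming principle applies, and since $(X_s,t_0+s)$ stays in $\Rd\times[t_0,t_0+\delta)$ for $s\le\sigma_r$ one may replace $u$ by $\varphi$ on the right-hand side to obtain
\begin{equation*}
0\le\Exp_{x_0}\bigl[\varphi(X_{\sigma_r},\,t_0+\sigma_r)-\varphi(x_0,t_0)\bigr]+\Exp_{x_0}\Bigl[\int_0^{\sigma_r}\ell(X_s,\,t_0+s)\,\D s\Bigr].
\end{equation*}
Because $\varphi$ is $\cC^{2,1}$ along the stopped path while $\Psidel\varphi$ only reads off the bounded continuous global values of $\varphi$, Dynkin's formula for the space--time process, whose generator is $\partial_t-\Psidel$, converts the first expectation into $\Exp_{x_0}\bigl[\int_0^{\sigma_r}(\partial_t\varphi-\Psidel\varphi)(X_s,t_0+s)\,\D s\bigr]$, whence
\begin{equation*}
0\le\Exp_{x_0}\Bigl[\int_0^{\sigma_r}\bigl(\partial_t\varphi-\Psidel\varphi+\ell\bigr)(X_s,\,t_0+s)\,\D s\Bigr].
\end{equation*}
Dividing by $\Exp_{x_0}[\sigma_r]>0$ and shrinking $\rho$ and $r$ so that $(\partial_t\varphi-\Psidel\varphi+\ell)(X_s,t_0+s)$ stays within $\varepsilon$ of its value at $(x_0,t_0)$ along the stopped path — using continuity of $\partial_t\varphi$ and of $\Psidel\varphi$ near $x_0$, and of $\ell$ — one gets $(\partial_t-\Psidel)\varphi(x_0,t_0)+\ell(x_0,t_0)\ge-\varepsilon$ for every $\varepsilon>0$, i.e.\ the required subsolution inequality. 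Reversing the touching (to $\varphi\le u$ on $\Rd\times[t_0,t_0+\delta)$) and all inequalities gives the supersolution inequality verbatim; the same proof works with $\partial_t$ replaced by $\partial_{t^+}$. Since $u$ is continuous, this shows $u$ is a viscosity solution of \eqref{E4.1}.

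Finally I would settle uniqueness: if $v\in\cC_b(\Rd\times[0,T])$ is any viscosity solution of \eqref{E4.1}, then $w=v-u$ is simultaneously a viscosity sub- and supersolution of the homogeneous equation $(\partial_t-\Psidel)w=0$ in $D\times[0,T)$ with $w=0$ in $D^c\times[0,T]$ and $w(\cdot,T)=0$; comparing $w$ with the trivial solution $0$ through the comparison principle for nonlocal parabolic equations in \cite{CLD,Sil} forces $w\le 0$ and $w\ge 0$, so $v=u$. The two steps I expect to require most care are: making the strong Markov and exit-time-additivity manipulations in the dynamic programming principle rigorous for the pure-jump process $X$ (in particular that $X_{\uptau}\in D^c$ $\Prob_x$-a.s.\ and that $\partial D$ is not charged), and justifying Dynkin's formula for a test function that is merely locally $\cC^{2,1}$ while $\Psidel$ depends on its global values — which is handled by stopping the path inside the small ball where $\varphi$ is smooth, so that only the bounded continuous tail of $\varphi$ enters the integral defining $\Psidel\varphi$.
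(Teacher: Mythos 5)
Your proof follows essentially the same route as the paper: you first establish a dynamic-programming identity via the strong Markov property (the paper's \eqref{EL4.1A}, stated with the specific stopping time $\delta\wedge\uptau_\sB$; your version with a general $\sigma\le\tau'$ is a mild generalization but you then specialize to the same kind of exit time), next apply Dynkin's formula to a test function localized to a ball where it is $\cC^{2,1}$ to extract the viscosity inequalities, and finally invoke the parabolic comparison principle from \cite{Sil} for uniqueness. The only cosmetic differences are that you divide by $\Exp_{x_0}[\sigma_r]$ and use uniform continuity along the stopped path, whereas the paper divides by $\delta_1$ and lets $\delta_1\to 0$ using $\Prob_x(\uptau_\sB>0)=1$; both steps are standard and equivalent.
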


\begin{proof}
Let $x\in\sB\subset D$. By $\uptau_\sB$ we denotes the exit time from $\sB$ i.e.,
$$\uptau_\sB=\inf\{t>0\; :\; X_t\notin \sB\}.$$
It is evident that $\uptau_\sB\leq\uptau$. First we show that for any $\delta< T-t$
\begin{equation}\label{EL4.1A}
u(x, t)=\Exp_x[u(X_{\delta\wedge\uptau_\sB}, t+\delta\wedge\uptau_\sB)]
 + \Exp_x\left[\int_0^{\delta\wedge\uptau_\sB} \ell(X_s, t+s) ds\right].
\end{equation}
Using \eqref{E4.2} we write
\begin{align*}
u(x, t) &=\Exp_x[g(X_{T-t})\Ind_{\{T-t<\uptau\}}] + 
\Exp_x\left[\int_0^{T-t} \ell(X_s, t+s)\Ind_{\{s<\uptau\}} ds\right]
\\
&=  \Exp_x[g(X_{T-t}) \Ind_{\{\delta\wedge\uptau_\sB\leq \uptau\}} \Ind_{\{(T-t)<\uptau\}}]
+ \Exp_x\left[\Ind_{\{\delta\wedge\uptau_\sB\leq \uptau\}}\int_{\delta\wedge\uptau_\sB}^{T-t} \ell(X_s, t+s)\Ind_{\{s<\uptau\}} ds\right]
\\
&\quad + \Exp_x\left[\Ind_{\{\delta\wedge\uptau_\sB\leq \uptau\}}\int_0^{\delta\wedge\uptau_\sB} \ell(X_s, t+s)\Ind_{\{s<\uptau\}} ds\right]
\\
&= \Exp_x\left[\Ind_{\{\delta\wedge\uptau_\sB\leq\uptau\}} \Exp_{X_{\delta\wedge\uptau_\sB}}
\left[g(X_{(T-t-\delta\wedge\uptau_\sB)}) \Ind_{\{T-t-\delta\wedge\uptau_\sB<\uptau\}}\right]\right]
\\
&\qquad + \Exp_x\left[\Ind_{\{\delta\wedge\uptau_\sB\leq \uptau\}}
\Exp_{X_{\delta\wedge\uptau_\sB}}\left[\int_0^{(T-t-\delta\wedge\uptau_\sB)} 
\ell(X_s, t+\delta\wedge\uptau_\sB+s)\Ind_{\{s<\uptau\}} ds\right]\right]
\\
&\qquad + \Exp_x\left[\Ind_{\{\delta\wedge\uptau_\sB\leq\uptau\}}\int_0^{\delta\wedge\uptau_\sB} \ell(X_s, t+s) ds\right]
\\
&= \Exp_x\left[\Ind_{\{\delta\wedge\uptau_\sB\leq \uptau\}}
u(X_{\delta\wedge\uptau_\sB}, t+\delta\wedge\uptau_\sB)\right] + \Exp_x\left[\Ind_{\{\delta\wedge\uptau_\sB\leq \uptau\}}\int_0^{\delta\wedge\uptau_\sB} \ell(X_s, t+s) ds\right]
\\
&= \Exp_x\left[u(X_{\delta\wedge\uptau_\sB}, t+\delta\wedge\uptau_\sB)\right] +
 \Exp_x\left[\int_0^{\delta\wedge\uptau_\sB} \ell(X_s, t+s) ds\right],
\end{align*}
where in the third line we use strong Markov property and in the last line we use 
the fact that $\Prob_x(\delta\wedge\uptau_\sB\leq\uptau)=1$ . This proves \eqref{EL4.1A}. This relation
is key to show that $u$ is also a viscosity solution. We only check that $u$ is a viscosity subsolution and 
the other part would be analogous. Consider $(x, t)\in D\times[0, T)$ and $\varphi\in \cC^{2, 1}_b(x, t)$ satisfying
$$\varphi(x, t)=u(x, t),\quad \varphi(y, s)\geq u(y, s) \quad \text{for}\; y\in \Rd, \; t\leq s< t+\delta.$$
Choose a ball $\sB$, centered at $x$, small enough so that $\varphi$ is $\cC^{2,1}$ in 
$\bar\sB\times[t, t+\delta]$. Let $\delta_1<\delta$.
Then applying Dynkin-It\^{o} formula we know that
\begin{align*}
\Exp_x\left[\int_0^{\delta_1\wedge\uptau_\sB} (\partial_t-\Psidel)\varphi(X_s, t+s)\D{s}\right]
&=\Exp_x[\varphi(X_{\delta_1\wedge\uptau_\sB}, t+\delta_1\wedge\uptau_\sB)]-\varphi(x, t)
\\
&\geq \Exp_x[u(X_{\delta_1\wedge\uptau_\sB}, t+\delta_1\wedge\uptau_\sB)]-u(x, t)
\\
&=- \Exp_x\left[\int_0^{\delta_1\wedge\uptau_\sB} \ell(X_s, t+s) ds\right],
\end{align*}
using \eqref{EL4.1A}. Since $\Prob_x(\uptau_\sB>0)=1$,
dividing both sides by $\delta_1$ and letting $\delta_1\to 0$ to obtain
$$(\partial_t-\Psidel)\varphi(x, t)+\ell(x, t)\geq 0.$$
Similarly, we can show that $u$ is a supersolution.

The uniqueness part follows using a similar argument as in \cite[Lemma~3.3]{Sil} (Note that the proof of
\cite[Lemma~3.2]{Sil} is based on the ideas from \cite{CS09} which works for general nonlocal operators).
\end{proof}

Our next lemma concerns representation of Schr\"{o}dinger equation.
\begin{lemma}\label{L4.2}
Suppose that $\ell, V$ are continuous and bounded in $D$ and $g\in C_0(D)$. Define
$$\varphi(x, t) =
\Exp_x\left[e^{\int_0^{(T-t)\wedge\uptau} V(X_s, t + s)\, \D{s}}g(X_{(T-t)\wedge\uptau})\right] + \Exp_x\left[\int_0^{(T-t)\wedge\uptau} e^{\int_0^{s} V(X_k, t + k)\, \D{k}} \ell(X_s, t+s)\, \D{s}\right].$$
Then $\varphi$ solves 
\begin{equation}\label{EL4.2A}
\begin{split}
(\partial_t-\Psidel)\varphi + \ell + V\varphi &=\, 0 \quad \text{in}\; D\times [0, T),
\\
\varphi(x, T)\,=\, g(x)\; & \text{and}\; \varphi(x, t)=\, 0\quad \text{in}\;  D^c\times[0, T].
\end{split}
\end{equation}
\end{lemma}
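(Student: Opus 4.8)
The plan is to mirror the proof of \cref{L4.1}, carrying the multiplicative functional $M_s\df\exp\bigl(\int_0^s V(X_k, t+k)\,\D k\bigr)$ through every step; boundedness of $V$ on $D$ and of $g,\ell$, together with $(T-t)\wedge\uptau\le T$, makes $\varphi$ bounded and legitimizes all interchanges of limit and integral below by dominated convergence. \textbf{Step 1} is the dynamic programming identity: for a ball $\sB$ with $x\in\sB\subset D$, writing $\uptau_\sB$ for the exit time of $X$ from $\sB$, and for every $\delta<T-t$,
\begin{equation*}
\varphi(x, t) = \Exp_x\Bigl[M_{\delta\wedge\uptau_\sB}\,\varphi\bigl(X_{\delta\wedge\uptau_\sB},\, t+\delta\wedge\uptau_\sB\bigr)\Bigr] + \Exp_x\Bigl[\int_0^{\delta\wedge\uptau_\sB} M_s\,\ell(X_s, t+s)\,\D s\Bigr].
\end{equation*}
This is derived exactly as \eqref{EL4.1A}: split the two $[0,(T-t)\wedge\uptau]$-integrals at the stopping time $\delta\wedge\uptau_\sB$ (legitimate since $\uptau_\sB\le\uptau$, hence $\Prob_x(\delta\wedge\uptau_\sB\le\uptau)=1$), use $M_s=M_{\delta\wedge\uptau_\sB}\cdot\exp\bigl(\int_0^{s-\delta\wedge\uptau_\sB}V(X_{\delta\wedge\uptau_\sB+r},t+\delta\wedge\uptau_\sB+r)\,\D r\bigr)$ for $s\ge\delta\wedge\uptau_\sB$, and apply the strong Markov property at $\delta\wedge\uptau_\sB$; the two tail pieces then reassemble into $M_{\delta\wedge\uptau_\sB}\varphi(X_{\delta\wedge\uptau_\sB},t+\delta\wedge\uptau_\sB)$ by the very definition of $\varphi$.

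\textbf{Step 2} verifies that $\varphi$ is a viscosity solution of \eqref{EL4.2A}, the only new ingredient being the drift term produced by the product rule. Given $(x,t)\in D\times[0,T)$ and $\phi\in\cC^{2,1}_b(x,t)$ touching $\varphi$ from above at $(x,t)$ on $\Rd\times[t,t+\delta)$ (shrinking $\delta$ so that $\delta<T-t$), choose $\sB$ small so that $\phi\in\cC^{2,1}(\bar\sB\times[t,t+\delta])$. Since $\D M_s=M_s V(X_s,t+s)\,\D s$ contributes no bracket term, the Dynkin--It\^{o} formula applied to $s\mapsto M_s\phi(X_s,t+s)$ gives, for $\delta_1<\delta$,
\begin{equation*}
\Exp_x\Bigl[M_{\delta_1\wedge\uptau_\sB}\phi(X_{\delta_1\wedge\uptau_\sB},t+\delta_1\wedge\uptau_\sB)\Bigr]-\phi(x,t)=\Exp_x\Bigl[\int_0^{\delta_1\wedge\uptau_\sB}M_s\bigl[(\partial_t-\Psidel)\phi+V\phi\bigr](X_s,t+s)\,\D s\Bigr].
\end{equation*}
Bounding the left side below via $\phi\ge\varphi$, $\phi(x,t)=\varphi(x,t)$, $M\ge0$ and Step 1 yields $\Exp_x\bigl[\int_0^{\delta_1\wedge\uptau_\sB}M_s[(\partial_t-\Psidel)\phi+V\phi+\ell](X_s,t+s)\,\D s\bigr]\ge0$; dividing by $\delta_1$, letting $\delta_1\downarrow0$, and using $\Prob_x(\uptau_\sB>0)=1$, $M_0=1$ and continuity of the integrand at $s=0$ gives $(\partial_t-\Psidel)\phi(x,t)+V(x,t)\phi(x,t)+\ell(x,t)\ge0$, the subsolution inequality for \eqref{EL4.2A}; the supersolution inequality is symmetric. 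For the data: if $x\in D^c\cup\partial D$ then $\uptau=0$ $\Prob_x$-a.s. (all boundary points of the $\cC^{1,1}$ domain $D$ being regular), so $(T-t)\wedge\uptau=0$, $X_0=x$, and $\varphi(x,t)=g(x)=0$ since $g\in\cC_0(D)$; at $t=T$ the same gives $\varphi(x,T)=g(x)$. One may also read Step 1 as the infinitesimal form of the Feynman--Kac identity exhibiting $\varphi$ as the potential-theoretic solution of \eqref{E4.1} with source $\ell+V\varphi$, which ties the statement directly to \cref{L4.1}.

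\textbf{Main obstacle.} The delicate point is Step 1: splitting the stochastic exponential at the random time $\delta\wedge\uptau_\sB$ and lining up the strong Markov property so that the remainder recombines into $M_{\delta\wedge\uptau_\sB}\varphi(\cdot)$ — the same bookkeeping as for \eqref{EL4.1A} but with the weight now present throughout. A secondary point, needed both for the viscosity framework and for any appeal to \cref{L4.1}, is the continuity of $\varphi$ on $\Rd\times[0,T]$ up to $\partial D$; this follows from the $\cC^{1,1}$-regularity of $D$ (so that $x\mapsto\uptau$ is a.s.\ continuous), dominated convergence, and the uniform continuity of $g,\ell,V$. Once these are in place the Dynkin--It\^{o} computation of Step 2 is routine, provided one does not drop the $V\phi$ term coming from the product rule.
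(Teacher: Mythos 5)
Your proof is correct but, after the dynamic-programming identity, follows a genuinely different route from the paper's. The paper does not verify viscosity inequalities at all; it fixes $x\in D$, sets $\xi(p)=\Exp_x[\varphi(X_{p\wedge\uptau},t+p\wedge\uptau)]$, computes the one-sided derivative as $-\Exp_x[(V\varphi+\ell)(X_p,t+p)\Ind_{\{\uptau>p\}}]$, checks its continuity (so $\xi\in\cC^1$), and integrates by the fundamental theorem of calculus to land directly on the potential-theoretic representation \eqref{E4.2} with source $\ell+V\varphi$ --- which is precisely what ``solves'' means in this section and is the form used downstream (cf.\ \eqref{EL4.3A0}, \eqref{EL4.5C}). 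Your Step~2 replaces that derivative-plus-FTC computation by applying Dynkin--It\^o to $s\mapsto M_s\phi(X_s,t+s)$ for test functions $\phi$, thereby obtaining the viscosity subsolution/supersolution inequalities directly; your Step~1 is the weighted analogue of \eqref{EL4.1A} (stopping at $\delta\wedge\uptau_\sB$), while the paper's \eqref{EL4.2B} works at a deterministic time $\delta$. Your route is more uniform with the proof of \cref{L4.1}; the paper's yields the integral representation without any detour. To close the loop to the statement as it is actually used, your argument still needs one further appeal to the uniqueness in \cref{L4.1}, treating $\ell+V\varphi$ as a fixed continuous source in \eqref{E4.1} so that the viscosity solution you produced must coincide with the potential-theoretic one --- a step you allude to in your final sentence but do not make explicit.
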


\begin{proof}
It is routine to check $\varphi$ is continuous (cf. \cite[Lemma~3.1]{BL17a}) and $\varphi(\cdot, t)=0$ in $D^c$. It also follows from the definition $\varphi(x, T)=g(x)$.
Now fix any $t\in [0, T)$ and $\delta<T-t$. Since $g$ and $\varphi$ vanish outside $D$ we obtain 
that
\begin{align}\label{EL4.2B}
&\varphi(x, t) \nonumber
\\
&=\Exp_x\left[\Ind_{\{T-t<\uptau\}}e^{\int_0^{T-t} V(X_s, t + s)\, \D{s}}g(X_{T-t})\right] + \Exp_x\left[\int_0^{(T-t)} \Ind_{\{s<\uptau\}} e^{\int_0^{s} V(X_k, t + k)\, \D{k}} \ell(X_s, t+s) \D{s}\right]\nonumber
\\
&= \Exp_x\left[\Ind_{\{\delta<\uptau\}} e^{\int_0^{\delta} V(X_s, t + s) \D{s}}
\Exp_{X_\delta}\left[\Ind_{\{T-t-\delta<\uptau\}}e^{\int_0^{T-t-\delta} V(X_s, t+\delta + s) \D{s}}g(X_{T-t-\delta})\right]\right]\nonumber
\\
&\quad + \Exp_x\left[\int_0^{\delta} \Ind_{\{s<\uptau\}} e^{\int_0^{s} V(X_k, t + k) \D{k}} \ell(X_s, t+s) \D{s}\right]
\nonumber
\\
&\quad + \Exp_x\left[\Ind_{\{\delta<\uptau\}}e^{\int_0^{\delta} V(X_s, t + s) \D{s}}\Exp_{X_\delta}\left[\int_0^{T-t-\delta} \Ind_{\{s<\uptau\}} e^{\int_0^{s} V(X_k, t+\delta + k)\, \D{k}} \ell(X_s, t+s) \D{s}\right]\right]\nonumber
\\
& = \Exp_x\left[\Ind_{\{\delta <\uptau\}} e^{\int_0^{\delta} V(X_s, t + s)\, \D{s}}
\varphi(X_\delta, t+\delta)\right] + \Exp_x\left[\int_0^{\delta} \Ind_{\{s<\uptau\}} e^{\int_0^{s} V(X_k, t + k)\, \D{k}} \ell(X_s, t+s) \D{s}\right],
\end{align}
where the second equality follows from the strong Markov property.
Now fix $x\in D$ and define
$$\xi(p)= \Exp_x[\varphi(X_{p\wedge\uptau}, t+p\wedge\uptau)].$$
Then, using \eqref{EL4.2B} we note that
\begin{align*}
&\xi(p)-\xi(p-\delta)
\\
& = \Exp_x[\varphi(X_{p\wedge\uptau}, t+p\wedge\uptau)]-
\Exp_{x}\left[\Ind_{\{p-\delta<\uptau\}}\Exp_{X_{p-\delta}}\left[\Ind_{\{\delta<\uptau\}} e^{\int_0^{\delta} V(X_s, t+p-\delta + s) \D{s}}
\varphi(X_\delta, t+p)\right]\right]
\\
&\qquad - \Exp_{x}\left[\Ind_{\{p-\delta<\uptau\}}\Exp_{X_{p-\delta}}\left[\int_0^\delta \Ind_{\{s<\uptau\}} e^{\int_0^{s} V(X_k, t+p-\delta + k) \D{k}} \ell(X_s, t+p-\delta+s) \D{s}\right]\right]
\\
&=\Exp_x[\varphi(X_{p\wedge\uptau}, t+p\wedge\uptau)]-
\Exp_{x}\left[\Ind_{\{p-\delta<\uptau\}}\Exp_{x}\left[\Ind_{\{p<\uptau\}} e^{\int_0^{\delta} V(X_{s+p-\delta}, t+p-\delta + s) \D{s}}
\varphi(X_{p}, t+p)\Big|\sF_{\uptau\wedge(p-\delta)}\right]\right]
\\
&\qquad - \Exp_{x}\left[\Ind_{\{p-\delta<\uptau\}}\Exp_{x}\left[\int_0^\delta \Ind_{\{s+p-\delta<\uptau\}} e^{\int_0^{s} V(X_{p-\delta+k}, t+p-\delta + k) \D{k}} \ell(X_{p-\delta+s}, t+p-\delta+s) \D{s} \Big|\sF_{\uptau\wedge(p-\delta)}\right]\right]
\\
&=\Exp_x[\varphi(X_{p}, t+p)\Ind_{\{p<\uptau\}}]-
\Exp_{x}\left[\Ind_{\{p<\uptau\}} e^{\int_0^{\delta} V(X_{s+p-\delta}, t+p-\delta + s) \D{s}}
\varphi(X_{p}, t+p)\right]
\\
&\qquad - \Exp_{x}\left[\int_0^\delta \Ind_{\{\uptau>s+p-\delta\}} e^{\int_0^{s} V(X_{p-\delta+k}, t+p-\delta + k) \D{k}} \ell(X_{p-\delta+s}, t+p-\delta+s) \D{s} \right]\,.
\end{align*}
Then, using the quasi-continuity property of $X$, we obtain
\begin{align*}
\lim_{\delta\to 0+} \frac{1}{\delta}(\xi(p)-\xi(p-\delta))
&= -\Exp_x[V(X_p, t+p)\varphi(X_{p}, t+p)\Ind_{\{\uptau>p\}}] - 
\Exp_x[\Ind_{\{\uptau\geq p\}} \ell (X_p, t+p)]
\\
&= -\Exp_x[V(X_p, t+p)\varphi(X_{p}, t+p)\Ind_{\{\uptau>p\}}] - 
\Exp_x[\Ind_{\{\uptau> p\}} \ell (X_p, t+p)]\df -\zeta(p),
\end{align*}
where the last line follows since $\Prob_x(\uptau=p)=0$. Hence the left derivative of 
$\xi$ exists and given by $\zeta$ which is continuous. Therefore, $\xi$ is a $\cC^1$ function. Now  using the fundamental theorem of calculus we obtain
\begin{align*}
\varphi(x, t)&= \xi(T-t) + \int_0^{T-t}\zeta(s)\, \D{s}
\\
&=\Exp_x[\varphi(X_{T-t})\Ind_{\{T-t<\uptau\}}] +
\Exp_s\left[\int_0^{T-t} (V(X_s, t+s)\varphi(X_{s}, t+s)+ \ell (X_s, t+s))\Ind_{\{s<\uptau\}}\D{s}\right]\,.
\end{align*}
Thus $\varphi$ solves \eqref{EL4.2A}.
\end{proof}
Next we get a parabolic comparison principle. Let $q:\bar{D}\times[0, \infty)\to [0, \infty)$ be
a continuous function, $\cC^1$ in its second variable and $q_s:\bar{D}\times[0, \infty)\to\RR$
is also continuous. Also, assume that $q(x, 0)=0$ and
$$s\mapsto \frac{q(x, s)}{s}\quad \mbox{is decreasing}.$$
\begin{lemma}\label{L4.3}
Let $u, v$ be two positive solutions of 
$$(\partial_t-\Psidel)w + q(x,w)=0 \quad \text{in}\; D\times[0, T), \quad w=0\; \text{in}\; D^c\times[0, T].$$
If $u(x, T)\leq v(x, T)$ in $\Rd$, then we also have $u\leq v$ in $\Rd\times[0, T]$.
\end{lemma}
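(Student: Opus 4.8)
The plan is to reduce the parabolic comparison to the elliptic one (\cref{L3.1}) via a scaling-and-time-slice argument, mirroring the structure of the proof of \cref{L3.1} but now on the space-time cylinder. First I would set up the extremal scaling parameter adapted to the terminal condition: define
\[
\varrho \df \sup\{t>0 \; : \; tu < v \ \text{in}\ D\times[0,T]\}.
\]
Since $u(\cdot,T)\le v(\cdot,T)$ with both strictly positive in $D$, and since by the potential-theoretic representation \eqref{E4.2} together with the boundary decay estimate \eqref{ET2.1B} and Hopf's lemma \cref{T2.2} the ratios $u/V(\delta_D)$ and $v/V(\delta_D)$ are bounded above and below on the cylinder (uniformly in $t$, because $[0,T]$ is compact and these quantities are continuous in $t$), the same reasoning as in \cref{L3.1} shows $0<\varrho<\infty$. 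The goal is to prove $\varrho\ge 1$.

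Assume for contradiction that $\varrho<1$, and set $w=\varrho u$. Using that $s\mapsto q(x,s)/s$ is decreasing and that $u$ solves the equation, we get
\[
(\partial_t-\Psidel)w + q(x,w) = (\partial_t-\Psidel)(\varrho u) + \frac{q(x,\varrho u)}{\varrho u}\,\varrho u
\;\ge\; \varrho\Bigl[(\partial_t-\Psidel)u + q(x,u)\Bigr] = 0 \quad\text{in}\ D\times[0,T),
\]
so $w$ is a supersolution; note the inequality is strict somewhere since $\varrho<1$ and $q(x,\cdot)/\cdot$ is strictly... actually only decreasing, so I should be careful — if $q(x,s)/s$ is merely decreasing the inequality may degenerate, and I would instead argue using that $w$ is a supersolution and $v$ a solution and derive the strictness from the terminal data or from Hopf's lemma applied at the level of the difference. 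Then, writing $z=v-w$, subtracting the equations and linearizing $q(x,v)-q(x,w) = c(x,t)(v-w)$ with $c$ bounded (by the $\cC^1$-in-$s$ hypothesis and the uniform bounds on $u,v$), I obtain
\[
(\partial_t-\Psidel)z + c(x,t)\,z \le 0 \quad\text{in}\ D\times[0,T),\qquad z(\cdot,T)\ge 0,\qquad z=0 \ \text{in}\ D^c\times[0,T].
\]
Here I would invoke the parabolic comparison principle in its \emph{linear} form — which follows directly from the potential-theoretic representation of \cref{L4.2} (with the terminal datum $g=z(\cdot,T)\ge0$, $\ell=0$, and potential $V=-c$): the Feynman–Kac formula gives $z\ge 0$, i.e. $v\ge w=\varrho u$ on the whole cylinder. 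If $z\equiv 0$ somewhere inside at the terminal time one pushes it back; otherwise a strong-minimum/Hopf argument (again \cref{T2.2}, applied slicewise, or rather its parabolic analogue obtained from the representation) upgrades this: either $z\equiv0$, contradicting the strictness noted above, or $\inf z/V(\delta_D)>\eta$ uniformly, and then using the upper bound on $u/V(\delta_D)$ we find $t_1>0$ with $v-w>t_1 u$, i.e. $v>(\varrho+t_1)u$ in $D\times[0,T]$, contradicting maximality of $\varrho$. Hence $\varrho\ge 1$ and $v\ge u$.

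The main obstacle is the strictness of the inequality when $q(x,s)/s$ is only assumed \emph{decreasing} rather than \emph{strictly} decreasing — in \cref{L3.1} strict monotonicity was assumed, but here it is not, so the step producing a strict supersolution inequality for $w=\varrho u$ can fail, and one must instead extract the needed strictness from the terminal condition: if $u(\cdot,T)=v(\cdot,T)$ the equation forces $u\equiv v$ by uniqueness of the potential-theoretic solution, and if $u(\cdot,T)\lneq v(\cdot,T)$ then $\varrho<1$ already forces $v-\varrho u$ to be a nontrivial nonnegative supersolution-type difference, and the Hopf-type alternative closes the argument. A secondary technical point is justifying the slicewise Hopf estimate in the parabolic setting; this is handled by noting that for each fixed $t$ the function $z(\cdot,t)$ is, by \eqref{E4.2}, a Green-potential of a nonnegative right-hand side plus a nonnegative harmonic-type piece, hence the lower bound $z(\cdot,t)\ge \eta\,V(\delta_D)$ holds with $\eta$ uniform in $t\in[0,T]$ by compactness.
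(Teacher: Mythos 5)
Your proposal adapts the elliptic comparison \cref{L3.1} (Hopf-lemma route) to the cylinder; the paper's proof of \cref{L4.3} is genuinely different: it works entirely through the Feynman--Kac representation from \cref{L4.2} and a Gronwall inequality, never invoking a Hopf-type boundary estimate. Concretely, the paper writes both $u$ and $v$ in the multiplicative Feynman--Kac form $u(x,t)=\Exp_x\bigl[e^{\int_0^{T-t}G(X_s,t+s)\,\D{s}}\,u(X_{T-t},T)\Ind_{\{T-t<\uptau\}}\bigr]$ with $G=q(x,u)/u$ bounded, reads off the crude two-sided estimate $v\ge e^{-KT}u$ to guarantee that $\beta=\sup\{t:tu\le v\}\ge e^{-KT}>0$, and then, after forming $w=v-\beta u$ and using $q(x,v)-\beta q(x,u)\ge q(x,v)-q(x,\beta u)\ge -Mw$, runs Gronwall on $\xi(s)=\Exp_x[\Ind_{\{\uptau>s\}}w(X_s,t+s)]$ to conclude $\xi(T-t)\le C\,w(x,t)$; comparing with the representation of $u$ yields $w\ge\kappa u$ pointwise in $D\times[0,T]$ with $\kappa>0$, contradicting the maximality of $\beta$.

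There are two genuine gaps in your route. First, the slicewise Hopf upgrade $z(\cdot,t)\ge\eta\,V(\delta_D)$ with $\eta$ ``uniform in $t\in[0,T]$ by compactness'' is not justified and in general false near $t=T$: the terminal datum $z(\cdot,T)=v(\cdot,T)-\varrho u(\cdot,T)$ is only known to be nonnegative and continuous, and need not be bounded below by any multiple of $V(\delta_D)$ (for instance it may be compactly supported inside $D$); then the slice constants $\eta(t)$ can degenerate to zero as $t\uparrow T$, and compactness of $[0,T]$ does not prevent the limit constant from vanishing. The paper's Feynman--Kac comparison sidesteps this entirely because it bounds $w(x,t)$ below and $u(x,t)$ above by multiples of the \emph{same} expression $\Exp_x[\Ind_{\{T-t<\uptau\}}u(X_{T-t},T)]$, so no $V(\delta_D)$-decay rate ever enters. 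Second, your fallback to ``uniqueness of the potential-theoretic solution'' to handle the degenerate case is circular: uniqueness for the semilinear terminal-value problem is precisely a consequence of \cref{L4.3} (apply it in both directions), while \cref{L4.1} gives uniqueness only for the linear problem with prescribed $\ell$, not for $\ell$ depending on the solution.

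Two smaller points. Your definition $\varrho=\sup\{t>0:tu<v\}$ with strict inequality is problematic: the terminal data are allowed to coincide, and both functions vanish outside $D$ and may vanish at interior points, so strict inequality can fail for every $t>0$; the paper works with $\beta=\sup\{t:tu\le v\}$ for exactly this reason. And your worry about the missing strict monotonicity of $s\mapsto q(x,s)/s$ is misdiagnosed: the paper never needs it -- the required strict improvement comes from $w(\cdot,T)\ge(1-\beta)u(\cdot,T)\gneq0$ when $\beta<1$, fed back through the Feynman--Kac representation, not from a strict inequality in the PDE.
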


\begin{proof}
Let $G(x, t)=\frac{q(x, u(x, t))}{u(x, t)}$ and $H(x, t)=\frac{q(x,v(x, t))}{v(x, t)}$. Then using 
\cref{L4.2} we obtain
\begin{align}
u(x, t) &= \Exp_x\left[e^{\int_0^{T-t} G(X_s, t + s) \D{s}}u(X_{T-t}, T)\Ind_{\{T-t<\uptau\}}\right],
\quad (x, t)\in D\times[0, T],\label{EL4.3A0}
\\
v(x, t) &= \Exp_x\left[e^{\int_0^{T-t} H(X_s, t + s) \D{s}}v(X_{T-t}, T)
\Ind_{\{T-t<\uptau\}}\right],\quad  (x, t)\in D\times[0, T].\label{EL4.3A00}
\end{align}
Note that without loss of generality we may assume $u(\cdot, T)\gneq 0$, otherwise from above we get
$u=0$ and then, there is nothing to prove. Let $K=\max_{\bar{D}\times[0, T]}(|G|+|H|)$. Then it 
is evident from above that 
\begin{equation}\label{EL4.3A}
v(x, t) \geq e^{-K T} u(x, t)\quad \text{for all}\; x, t.
\end{equation}
Define
$$\beta=\sup\{t\; :\; tu\leq v\quad \text{in}\; D\times[0, T]\}.$$
Using \eqref{EL4.3A} we get that $\beta\geq e^{-K T}$. To complete the proof we need to show that
$\beta\geq 1$. Suppose, on the contrary, that $\beta<1$. Denote by $u_1=\beta u$. Then for $w=v-u_1\geq 0$ we have
\begin{align*}
w(x, t) & = \Exp_{x}\left[w(X_{T-t}, T)\Ind_{\{T-t<\uptau\}}\right] + 
\Exp_x\left[\int_0^{(T-t)\wedge\uptau} \Bigl(q(X_s, v(X_s, t+s)) - \beta q(X_s, u(X_s, t+s))\Bigr) \D{s}\right]\,.
\end{align*}
For any $\delta\in (0, T-t)$, we can repeat the calculation of \eqref{EL4.2B} with $V=0$
to arrive at
$$w(x, t) = \Exp_{x}\left[\Ind_{\{\delta<\uptau\}} w(X_{\delta}, t+\delta)\right]
+\Exp_x\left[\int_0^\delta \Ind_{\{s<\uptau\}}\Big(q(X_s, v(X_s, t+s)) - \beta q(X_s, u(X_s, t+s))\Big)\, \D{s}\right].$$
By our assumption on $q$, $q(x, v)-\beta q(x, u)\geq q(x, v)- q(x, \beta u) \geq - M w$,
for some constant $M$. Thus defining 
$\xi(s)=\Exp_x[\Ind_{\{\uptau>s\}} w(X_{s}, t+s)]$
we obtain
$$\xi(\delta) \leq \xi(0) + M \int_0^\delta \xi(s) \D{s}.$$
Applying Gronwall's inequality we then have
$$\xi(T-t)\leq C w(x, t),$$
for some constant $C$, independent of $(x,t)\in D\times [0, T]$.
Since $w(x, T)\gneq 0$, we must have $w(x, t)>0$ for all $t< T$. Furthermore,
$w(x, T)\geq (1-\beta) u(x, T)$, implying
$$C w(x, t)\geq \xi(T-t)\geq (1-\beta) \Exp_x[\Ind_{\{T-t<\uptau\}} u(X_{s}, T)],$$
which combined with \eqref{EL4.3A0} gives
$\kappa u(x, t)\leq w(x, t)$ for $(x, t)\in D\times[0, T]$ and for some $\kappa>0$.
This certainly contradicts the definition of $\beta$. Hence $\beta\geq 1$, completing the proof.
\end{proof}

Next we establish a regularity property in space up to the boundary.
\begin{lemma}\label{L4.4}
Suppose that $g, \ell$ be such that $\norm{g}_{L^\infty}, \norm{\ell}_\infty\leq K$. Then 
for any $u$ satisfying
$$u(x, t)=\Exp_x[g(X_{T-t})\Ind_{\{T-t<\uptau\}}] + \Exp_x\left[\int_0^{T-t}\ell(X_s, t+ s)
\Ind_{\{s<\uptau\}} \D{s}\right],\quad (x, t)\in D\times[0, T],$$
we have, for $t<T$
$$|u(x,t)-u(y,t)|\leq C\, V(|x-y|), \quad x, y\in D,$$
for a constant $C$ dependent on $t, T, K$ where $V$ is the potential measure introduced in \cref{S-prelim}.
We can also choose the constant $C$ uniformly in $t$ varying in a compact subset of $[0, T)$.
\end{lemma}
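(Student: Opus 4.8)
The plan is to combine the boundary decay of $u$, inherited from \cref{T2.1}, with an interior Hölder estimate in space for the parabolic equation, and then to patch the two together in the usual way. First I would fix $t_0\in(0,T)$ and prove the estimate only for $t\in[0,t_0]$, with a constant depending on $t_0,T,K,d,D,\Psi$ but not on $t$; since $t_0<T$ is arbitrary, this gives the uniformity over $t$ in compact subsets of $[0,T)$ claimed at the end.

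For the boundary decay, I would note that from the probabilistic representation of $u$, for $x\in D$ and $t\le t_0$,
\begin{equation*}
\abs{u(x,t)}\,\le\,\norm{g}_{L^\infty}\,\Prob_x(\uptau>T-t)+\norm{\ell}_{L^\infty}\,\Exp_x[(T-t)\wedge\uptau]\,\le\,K\Bigl(\tfrac{1}{T-t_0}+1\Bigr)\Exp_x[\uptau],
\end{equation*}
where I use Markov's inequality $\Prob_x(\uptau>s)\le s^{-1}\Exp_x[\uptau]$ and $(T-t)\wedge\uptau\le\uptau$. Since $\Exp_x[\uptau]=\cG 1(x)$ by \eqref{E2.3}, \cref{T2.1} applied with $f\equiv1$ yields $\Exp_x[\uptau]\le C\,V(\delta_D(x))$ with $C=C(d,D,\Psi)$, so there is $A=A(t_0,T,K,d,D,\Psi)$ with
\begin{equation}\label{E-plan-A}
\abs{u(x,t)}\,\le\,A\,V(\delta_D(x)),\qquad x\in D,\ t\in[0,t_0],
\end{equation}
and in particular $\norm{u}_{L^\infty(\Rd\times[0,t_0])}\le A\,V(\diam D)<\infty$.

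Next I would use that, by \cref{L4.1}, $u$ is the viscosity solution of $(\partial_t-\Psidel)u+\ell=0$ in $D\times[0,T)$, and invoke interior Hölder regularity for this equation — classical for the fractional Laplacian (\cite{CLD,Sil}) and, for a general Bernstein function satisfying \eqref{A1}, in the form scaled by $\phi(r)=\Psi(r^{-2})^{-1/2}$ that also underlies \cref{T2.1}. This provides $C=C(d,\Psi)$ and $\rho_*=\rho_*(T-t_0,\Psi)>0$ such that, for $x_0\in D$, $0<\rho\le\min\{\tfrac12\delta_D(x_0),\rho_*\}$, $t\le t_0$ and $x,y\in B(x_0,\rho/4)$,
\begin{equation*}
\abs{u(x,t)-u(y,t)}\,\le\,\frac{C}{\phi(\rho)}\Bigl(\,\sup\nolimits_{B(x_0,\rho)\times[t,\,t+\phi(\rho)^2]}\abs{u}\;+\;\phi(\rho)^2\,K\Bigr)\phi(\abs{x-y}),
\end{equation*}
the choice of $\rho_*$ guaranteeing that the space-time cylinder stays inside $D\times[0,T)$. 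Finally I would patch this with \eqref{E-plan-A} in the standard fashion (as in the derivation of \cref{T2.1}; cf.\ \cite{RS14,KKLL}), using that \eqref{E2.4} and \eqref{A1} make $V$ and $\phi$ comparable and doubling on $(0,\diam D]$: when $\abs{x-y}$ is comparable to (or larger than) $\min\{\delta_D(x),\delta_D(y),\rho_*\}$ the bound $\abs{u(x,t)-u(y,t)}\le C\,V(\abs{x-y})$ follows from the triangle inequality together with \eqref{E-plan-A} near $\partial D$, or with $\norm{u}_{L^\infty}$ in the deep interior, plus doubling; in the opposite regime one applies the displayed interior estimate with $x_0=x$ and $\rho\asymp\min\{\delta_D(x),\rho_*\}$, bounding $\sup_{B(x,\rho)}\abs{u}$ by $A\,V(\delta_D(x))\asymp\phi(\rho)$ near $\partial D$ and by $\norm{u}_{L^\infty}$ otherwise, so that the factor $\phi(\rho)^{-1}$ is absorbed and $\abs{u(x,t)-u(y,t)}\le C\,\phi(\abs{x-y})\le C\,V(\abs{x-y})$ with $C$ independent of $t\le t_0$.

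The hard part will be the interior estimate for the general operator $\Psidel$: for a genuinely multi-scale $\Psi$ (e.g.\ $\Psi(x)=x^{\alpha/2}+x^{\beta/2}$) the usual fractional-Laplacian parabolic regularity does not apply verbatim, so one must run the $\phi$-scaled version — the parabolic analogue of the estimates behind \cref{T2.1} — or, alternatively, bypass PDE regularity and argue directly from two-sided heat-kernel bounds for $p_D$. Keeping the parabolic cylinder inside $D\times[0,T)$ is a routine point, and is exactly the reason the modulus of continuity degenerates as $t\to T$.
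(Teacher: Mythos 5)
Your scheme—boundary decay from the Green-operator estimate, an interior Hölder estimate in the $\phi$-modulus, and then patching—is a plausible alternative outline, and the boundary-decay step \eqref{E-plan-A} is fine (Markov, $\Exp_x[\uptau]=\cG 1(x)$, and \cref{T2.1}). But the interior parabolic estimate is precisely the piece you do not have, and you concede as much: the references \cite{CLD,Sil} are for the fractional Laplacian (or order-comparable kernels), not for $\Psidel$ under \eqref{A1}--\eqref{A2} with $\upkappa_1<\upkappa_2$, and the variable-order regularity papers cited in the bibliography (\cite{KL20,KL21,KKL16}) are elliptic. Since the modulus $\phi(r)=\Psi(r^{-2})^{-1/2}\asymp V(r)$ is itself multi-scale, a ``$\phi$-scaled'' parabolic interior estimate is a genuinely new ingredient, not a routine adaptation, so as written your argument has a gap at its central step.

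The paper avoids this entirely and does what your last sentence gestures at, but with a crucial refinement: rather than only two-sided heat-kernel bounds, it uses a \emph{gradient} estimate for the Dirichlet heat kernel $p_D$ from \cite{KR18,GKK} of the form $|\grad_x p_D(t,x,y)|\lesssim\bigl(\frac{1}{\delta_D(x)\wedge 1}\vee\frac{1}{V^{-1}(\sqrt t)}\bigr)p_D(t,x,y)$, together with the sharp boundary-decaying upper bound for $p_D$ (as in \eqref{EL4.4C}) and the weak scaling \eqref{EL4.4D} of $V$. It splits $u=\sR_1+\sR_2$ (terminal-data term and source term), handles $\sR_2$ by directly citing \cite[Prop.~3.5]{KKLL}, and for $\sR_1$ argues by cases on whether $2|x-y|$ exceeds $\max\{\delta_D(x),\delta_D(y)\}$, using the gradient bound along the segment from $x$ to $y$ in the near regime and the boundary decay in the far regime. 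Two-sided heat kernel bounds alone would not give the near-regime control — it is the gradient estimate of $p_D$ that substitutes for the interior regularity you were trying to invoke. If you want to salvage your outline, replacing the interior PDE estimate with the heat-kernel gradient bound is the way to close the gap, but at that point you are doing the paper's proof.
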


\begin{proof}
Denote by
\begin{align*}
\sR_1(x) &= \Exp_x[g(X_{T-t})\Ind_{\{T-t<\uptau\}}]=\int_D g(y)p_D(T-t, x, z)\, \D{z},
\\[2mm]
\sR_2(x) &= \Exp_x\left[\int_0^{T-t}\ell(X_s, t+ s) \Ind_{\{s<\uptau\}} \D{s}\right]
= \int_0^{T-t} \ell(x, t+s) p_D(s, x, z)\, \D{z},
\end{align*}
where $p_d(t, x, z)$ denotes the transition density of the killed process $X^D$ (see \eqref{E2.2}).
Then from the arguments of \cite[Proposition~3.5]{KKLL} (see (3.13) of that paper) one can find a constant $C_1$, independent of $t, T$, satisfying 
\begin{equation}\label{EL4.4A}
|\sR_2(x)-\sR_2(y)|\leq C_1 V(|x-y|)\quad x, y\in D\,.
\end{equation}
To calculate $\sR_1$ we recall the following result from \cite[Theorem~1.1]{KR18}
and \cite[Theorem~1.3]{GKK} (see also \cite{CKS},\cite[Theorem~3.1]{KKLL})
\begin{align}
|\grad_x p_D(t, x, y)| &\leq C_2 \left(\frac{1}{\delta_D(x)\wedge 1}\vee \frac{1}{V^{-1}(\sqrt{t})}\right)
p_D(t, x, y) \quad x, y\in D,\label{EL4.4B}
\\[2mm]
p_D(t, x, y)&\leq C_3 \left(1\wedge\frac{V(\delta_D(x))}{\sqrt{t}}\right) \left(1\wedge\frac{V(\delta_D(y))}{\sqrt{t}}\right)
p(t, |x-y|)\quad x, y\in D\,,\label{EL4.4C}
\end{align}
for $t\in (0, T]$ and some constants $C_2, C_3$, dependent on $T$, where $p$ denotes the transition density of $X$. 
Using \eqref{A1} and \eqref{E2.4} we also have, for any $\kappa>0$, that
\begin{equation}\label{EL4.4D}
C_4^{-1}\left(\frac{R}{r}\right)^{\upkappa_1}\leq \frac{V(R)}{V(r)}\leq C_4\left(\frac{R}{r}\right)^{\upkappa_2} \quad 0<r\leq R\leq \kappa\,,
\end{equation}
where $C_4$ depends on $\kappa$.
Take $x, y\in D$. Suppose $2|x-y|\leq \max\{\delta_D(x), \delta_D(y)\}$. With no loss of generality
we may assume that $y\in B(x, \frac{1}{2}\delta_D(x))$. Note that for any point $z$ on the line 
joining $x$ and $y$ we get from \eqref{EL4.4B}-\eqref{EL4.4C}
\begin{align*}
|x-y||\grad_x p_D(T-t, z, y)|&\leq C_2 |x-y|\left(\frac{1}{\delta_D(z)\wedge 1}\vee 
\frac{1}{V^{-1}(\sqrt{T-t})}\right) p_D(T-t, z, y)
\\
&\leq C_2\, |x-y|\left(\frac{1}{\delta_D(z)\wedge 1}\vee \frac{1}{V^{-1}(\sqrt{T-t})}\right)
\\
&\quad \cdot C_3 \left(1\wedge\frac{V(\delta_D(z))}{\sqrt{T-t}}\right) 
\left(1\wedge\frac{V(\delta_D(y))}{\sqrt{T-t}}\right)p(T-t, |z-y|)
\\
&\leq C_5 \frac{|x-y|}{\delta_D(z)} V(\delta_D(z)),
\end{align*}
for some $C_5>0$.
Since $|x-y|\leq \frac{1}{2}\delta_D(x)\leq \delta_D(z)$,
using \eqref{EL4.4D} with $\kappa=\diam(D)$ we then obtain
$$\frac{|x-y|}{V(|x-y|)}|\grad_x p_D(T-t, z, y)\leq C_5 \frac{|x-y|}{\delta_D(z)} 
\frac{V(\delta_D(z))}{V(|x-y|)}
\leq C_4 C_5 \left(\frac{|x-y|}{\delta_D(z)}\right)^{1-\upkappa_2}\leq C_4 C_5.$$
Thus 
\begin{equation}\label{EL4.4E}
|\sR_1(x)-\sR_1(y)|\leq \int_D |g(y)| |p_D(T-t, x, z)-p_D(T-t, y, z)|\, \D{z}
\leq C_4 C_5 V(|x-y|) \norm{g}_{L^\infty} |D|.
\end{equation}
Now we consider the situation $2|x-y|\geq \max\{\delta_D(x), \delta_D(y)\}$. Then
using \eqref{EL4.4C}-\eqref{EL4.4D}
\begin{align}\label{EL4.4F}
|\sR_1(x)-\sR_1(y)|\leq C_7 (V(\delta_D(x))+V(\delta_D(y)))\leq 
C_7 (V(2|x-y|)+V(2|x-y|))\leq C_8 V(|x-y|),
\end{align}
for some constants $C_7, C_8$. Combining \eqref{EL4.4A}, \eqref{EL4.4E} and \eqref{EL4.4F} we get
the result.
\end{proof}

Now we are ready to prove an existence result.
\begin{lemma}\label{L4.5}
Let $q$ be same as in \cref{L4.3}. Also, assume that $q(x, \cdot)$ is $\cC^1$
, uniformly with respect to $x$. Let $\ell_1:\bar{D}\to (-\infty, 0]$,
$\ell_2:\bar{D}\to [0, \infty)$ be be two continuous functions.
Let $v_i, i=1,2,$ be a non-negative solution satisfying
$$(\partial_t -\Psidel) v_i + q(x, v_i) + \ell_i(x)=0\quad \text{in}\; D\times[0, T), \quad v(x, t)=0\; \text{in}\; D^c\times[0, T],$$
and let $g$ be such that $v_1(x, T)\leq g(x)\leq  v_2(x, T)$. 
Then there exists a unique solution $u\, (v_1\leq u\leq v_2)$ to 
\begin{equation}\label{EL4.5A}
\begin{split}
(\partial_t-\Psidel)u + q(x, u)&=0 \quad \text{in}\; D\times[0, T),
\\
u(x, T)\,=\, g(x)\; \text{and}\; u(x, t)&=\, 0\quad \text{in}\;  D^c\times[0, T].
\end{split}
\end{equation}
\end{lemma}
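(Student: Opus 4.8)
The plan is to construct a solution by monotone iteration, just as in the proof of \cref{T1.1}, but realised through the potential-theoretic representation of \cref{L4.2} rather than through Schauder estimates, which are unavailable for $\Psidel$. Since $q(x,\cdot)$ is $\cC^1$ uniformly in $x$ and $v_1,v_2$ are bounded and non-negative, set $M=\norm{v_2}_{L^\infty}$ and fix a Lipschitz constant $\theta>0$ for $q(x,\cdot)$ on $[0,M]$, so that $s\mapsto q(x,s)+\theta s$ is nondecreasing there while $0\le q(x,s)\le\theta s$. (If $v_1(\cdot,T)\equiv0$, then representing $v_1$ by \cref{L4.2} and using $\ell_1\le0$ forces $v_1\equiv0$, making the bound $v_1\le u$ automatic; so we may assume $v_1(\cdot,T)\gneq0$.)

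Starting from $u^0=v_2$, define $u^{n+1}$ by \cref{L4.2} with potential $V\equiv-\theta$ and source $q(x,u^n)+\theta u^n$, i.e.
\begin{equation*}
u^{n+1}(x,t)=\Exp_x\!\left[e^{-\theta((T-t)\wedge\uptau)}g(X_{(T-t)\wedge\uptau})\right]+\Exp_x\!\left[\int_0^{(T-t)\wedge\uptau}e^{-\theta s}\bigl(q(X_s,u^n(X_s,t+s))+\theta u^n(X_s,t+s)\bigr)\,\D s\right].
\end{equation*}
By \cref{L4.2} each $u^{n+1}\in\cC_b(\Rd\times[0,T])$, vanishes on $D^c$, equals $g$ at $t=T$, and solves $(\partial_t-\Psidel)u^{n+1}-\theta u^{n+1}+q(x,u^n)+\theta u^n=0$ in $D\times[0,T)$; moreover $v_2$ itself is represented the same way with source $q(x,v_2)+\theta v_2+\ell_2$. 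Subtracting these expectations gives $u^1\le v_2$ (using $v_2(\cdot,T)\ge g$ and $\ell_2\ge0$), then inductively $u^{n+1}\le u^n$ (using $u^n\le u^{n-1}$ and monotonicity of $s\mapsto q(x,s)+\theta s$), and $u^n\ge0$ (since $g\ge0$ and $q\ge0$). Thus $0\le\cdots\le u^{n+1}\le u^n\le\cdots\le v_2$, so the scheme never leaves $[0,M]$ and the choice of $\theta$ is legitimate; hence $u^n\downarrow u$ pointwise with $0\le u\le v_2$.

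By \cref{L4.4} the $u^n$ share a common spatial modulus of continuity $C\,V(\abs{x-y})$, uniformly for $t$ in compact subsets of $[0,T)$ (the data being uniformly bounded); combined with an elementary equicontinuity estimate in $t$ and the identity $u^n(\cdot,T)=g$, Arzel\`a--Ascoli yields a uniformly convergent subsequence whose continuous limit must be $u$, whence $u\in\cC(\Rd\times[0,T])$ and, by Dini's theorem, $u^n\to u$ uniformly. Passing to the limit in the displayed representation (bounded convergence, $q$ continuous) and invoking \cref{L4.2} shows $u$ solves \eqref{EL4.5A}, and $u\le v_2$ by construction. For the lower bound $v_1\le u$ I rerun the proof of \cref{L4.3} with $\beta=\sup\{t:tv_1\le u\text{ in }D\times[0,T]\}$: the integral representations of $v_1$ and $u$ together with $q(x,s)\le\theta s$, $\ell_1\le0$, $g\ge v_1(\cdot,T)$ and Gronwall give $v_1\le e^{\theta T}u$, so $\beta>0$; and if $\beta<1$, applying Gronwall to $s\mapsto\Exp_x[\Ind_{\{\uptau>s\}}(u-\beta v_1)(X_s,t+s)]$ produces, exactly as in \cref{L4.3}, a strict improvement of $\beta$ — a contradiction — the extra non-negative term $-\beta\ell_1$ only helping. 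Uniqueness of the solution (among all solutions, a fortiori in $[v_1,v_2]$) is immediate from \cref{L4.3}.

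\textbf{Main obstacle.} The one genuinely nontrivial point is showing that the monotone limit $u$ is continuous up to $\partial D$ and up to the terminal time, so that \cref{L4.2} identifies it as a (potential-theoretic, hence viscosity) solution; this is the only place where a real regularity input is used, namely the uniform boundary estimate of \cref{L4.4}. A small but necessary preliminary is confining the iteration to $[0,\norm{v_2}_{L^\infty}]$ — guaranteed by the ordering above — so that a global Lipschitz constant for $q$, and with it the monotonicity of $s\mapsto q(x,s)+\theta s$, is actually available.
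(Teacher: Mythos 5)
Your proposal is correct and follows essentially the same route as the paper: a monotone iteration $u^{n+1}$ defined through the exponential (Feynman--Kac) representation of \cref{L4.2} with the shift $\theta$, ordered between $v_1$ and $v_2$ by comparing the integral representations, with \cref{L4.4} supplying the equicontinuity needed to pass to the limit and \cref{L4.3} giving uniqueness. The only cosmetic difference is that the paper obtains the lower bound $v_1\le u^{n}$ directly inside the iteration (since $F(x,v_1)+\ell_1\le F(x,u^{n-1})$), whereas you recover $v_1\le u$ a posteriori by rerunning the comparison argument of \cref{L4.3}; both are valid.
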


\begin{proof}
The idea is similar to the elliptic case where we
use monotone iteration method. Let $m$ be Lipschitz constant of 
$s\mapsto q(x, s)$ in $[0, \norm{v}]$, that is,
$$|q(x, s_1)-q(x, s_2)|\leq m |s_1-s_2|\quad \text{for}\; s_1, s_2\in [0, \norm{v}], \; x\in\bar{D}.$$
Let $F(x, s)=q(x, s)+m s$ and $u_0= v_2$.
Define $u_1$ to be the solution of 
\begin{equation*}
\begin{split}
(\partial_t-\Psidel)u_1-mu_1 + F(x, u_0)&=0 \quad \text{in}\; D\times[0, T),
\\
u_1(x, T)\,=\, g(x)\; \text{and}\; u_1(x, t)&=\, 0\quad \text{in}\;  D^c\times[0, T].
\end{split}
\end{equation*}
By Lemma~\ref{L4.2} we then have
\begin{equation}\label{EL4.5B}
u_1(x, t)= \Exp_x\left[e^{-m{(T-t)} }g(X_{T-t})\Ind_{\{T-t<\uptau\}}\right] + 
\Exp_x\left[\int_0^{T-t} e^{-m s} F(X_s, u_0(X_s, t+s))\Ind_{\{s<\uptau\}} \D{s}\right].
\end{equation}
Another use of Lemma~\ref{L4.2} gives
\begin{equation}\label{EL4.5C}
v_i(x, t)= \Exp_x\left[e^{-m(T-t)} v_i(X_{T-t}, T)
\Ind_{\{T-t<\uptau\}}\right] + \Exp_x\left[\int_0^{T-t} e^{-m s} \tilde{F}_i(X_s, v_i(X_s, t+s)) 
\Ind_{\{s<\uptau\}}\D{s}\right],
\end{equation}
where $\tilde{F}_i(x, s)= F(x, s)+\ell_i(x)$. Since $F$ is non-decreasing in $s$, we
have
$$ F(x, v_1)+\ell_1(x)\leq F(x, v_2) \leq F(x, v_2) + \ell_2(x)\,.$$
Therefore, comparing \eqref{EL4.5B} and \eqref{EL4.5C} we have
$v_1\leq u_1\leq u_0=v_2$ in $\Rd\times[0, T]$.
Now we find an iterative sequence of solutions as follows: $u_{k+1}$ is a solution to  
$$(\partial_t -\Psidel) u-m u 
+ F(x, u_k)=0\quad \text{in}\; D\times[0, T), \quad u=0\;\; \text{in}\; D^c\times[0, T],
\quad u(x,T)=g(x).$$
In other words,
\begin{equation}\label{EL4.5D}
u_{k+1}(x, t)= \Exp_x\left[e^{-m{(T-t)} }g(X_{T-t})\Ind_{\{T-t<\uptau\}}\right] + 
\Exp_x\left[\int_0^{T-t} e^{-m s} F(X_s, u_k(X_s, t+s))\Ind_{\{s<\uptau\}} \D{s}\right].
\end{equation}
The above argument shows that 
$$v_1\leq u_{k+1}\leq u_k\leq \cdots\leq v_2\quad \text{in}\; \Rd\times[0, T]\,.$$
Furthermore, applying \cref{L4.4}, we see that $\lim_{k\to\infty} u_k(\cdot, t)= u(\cdot, t)$
uniformly in $x$, for each $t\in[0, T]$. Thus, using dominated convergence theorem, we can
 pass to the limit in \eqref{EL4.5D} to obtain
\begin{equation}\label{EL4.5E}
u(x, t)= \Exp_x\left[e^{-m{(T-t)} }g(X_{T-t})\Ind_{\{T-t<\uptau\}}\right] + 
\Exp_x\left[\int_0^{T-t} e^{-m s} F(X_s, u(X_s, t+s))\Ind_{\{s<\uptau\}} \D{s}\right].
\end{equation}
From \eqref{EL4.5E} it is easy to show that $u$ is continuous in $\Rd\times[0, T]$ (cf. \cite[Lemma~3.1]{BL17a}).
Indeed, since $x\mapsto u(x, t)$ is continuous
uniformly for $t$ in compact subsets of $[0, T)$ and $t\mapsto p_D(t, x, y)$ is continuous in $(0, \infty)$, $(x, t)\mapsto u(x, t)$ is continuous in $[0, T)\times\Rd$. To examine the continuity at $T$ consider
a sequence $(x_n, t_n)\to (x, T)$. Note that the second term in the above display goes to $0$. Again,
if $x\in\partial{D}$ then 
$$\Exp_{x_n}[|g(X_{T-t_n})|\Ind_{\{T-t_n<\uptau\}}]
\leq \Exp_{x_n}[|g(X_{T-t_n})|]\to 0,$$
as $n\to\infty$, we get $u(x_n, t_n)\to 0$.
Also, if $x\in D$, since $p_D(T-t_n, x_n, y)\D{y}\to\delta_x$, we get
$u(x_n, t_n)\to g(x)$. This gives continuity. 
Applying \cref{L4.2} we see that $u$ is a solution to \eqref{EL4.5A}.
Uniqueness of solution follows from \cref{L4.3}. This completes the proof.
\end{proof}

Next we prove a sharp boundary behaviour for the solution of the parabolic equation.
\begin{lemma}\label{L4.6}
Consider $q$ from \cref{L4.3}.
Let $u$ be a bounded solution of 
$$(\partial_t-\Psidel)u + q(x, u)\,=\,0 \quad \text{in}\; D\times[0, T), 
\quad u\,=0\; \text{in}\; D^c\times[0, T],$$
where $u(x, T)\gneq 0$. Then for every $t<T$ there exists a constant $C$, dependent on 
$t, T$ and $u|_{\Rd\times[t, T]}$, satisfying
$$\frac{1}{C}\, V(\delta_D(x))\leq u(x, t)\leq C\, V(\delta_D(x))\quad x\in D\,.$$
\end{lemma}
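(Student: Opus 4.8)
The plan is to sandwich $u(\cdot,t)$ between constant multiples of $V(\delta_D)$ directly from the potential‑theoretic representation of the solution, using \cref{L4.1,L4.4,T2.1} together with the sharp two‑sided Dirichlet heat kernel estimate. Write $g\df u(\cdot,T)$; by hypothesis $g\in\cC_0(D)$ and $g\gneq 0$, and $u\ge0$ so that $q(x,u)$ is defined. Recall that $u$ being a solution means
\[
u(x,t)=\Exp_x\big[g(X_{T-t})\Ind_{\{T-t<\uptau\}}\big]+\Exp_x\Big[\int_0^{(T-t)\wedge\uptau} q(X_s,u(X_s,t+s))\,\D s\Big],\qquad (x,t)\in D\times[0,T].
\]
Set $w(x)\df\Exp_x[g(X_{T-t})\Ind_{\{T-t<\uptau\}}]=\int_D g(z)\,p_D(T-t,x,z)\,\D z$. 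Since $q\ge0$ the second term above is nonnegative, so $u(x,t)\ge w(x)$; and since $q$ is continuous, $q(x,u(x,s))\le K\df\max_{\bar D\times[0,M]}q<\infty$ on $D\times[t,T]$, where $M\df\sup_{\Rd\times[t,T]}u<\infty$. Hence
\[
w(x)\ \le\ u(x,t)\ \le\ w(x)+K\,\Exp_x[\uptau]\qquad\text{for }x\in D,
\]
and it suffices to establish $\tfrac1{C}V(\delta_D(x))\le w(x)$, $\ w(x)\le C\,V(\delta_D(x))$ and $\Exp_x[\uptau]\le C\,V(\delta_D(x))$.

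The two upper bounds follow from results already at hand. Applying \cref{L4.4} to $w$ (with source $\ell\equiv0$) gives $|w(x)-w(y)|\le C\,V(|x-y|)$ for $x,y\in D$; taking $y\in\partial D$ with $|x-y|=\delta_D(x)$ and using $w=0$ on $\partial D$ yields $w(x)\le C\,V(\delta_D(x))$. Next, $x\mapsto\Exp_x[\uptau]=\cG f(x)$ with $f\equiv1$ is the solution of $\Psidel\phi=1$ in $D$, $\phi=0$ in $D^c$ (see \eqref{E2.3}), so the bound \eqref{ET2.1B} of \cref{T2.1} with $f\equiv1$ gives $\Exp_x[\uptau]\le C\,V(\delta_D(x))$. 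Combined with the sandwich this already yields $u(x,t)\le C\,V(\delta_D(x))$.

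The lower bound $w(x)\ge\tfrac1{C}V(\delta_D(x))$ is the heart of the matter. Since $g$ is continuous, nonnegative and not identically zero, I pick $z_0\in D$ and $r_0,c_0>0$ with $\overline{B(z_0,r_0)}\subset D$ and $g\ge c_0$ on $B\df B(z_0,r_0)$; put $d_0\df\dist(B,\partial D)>0$. Then $w(x)\ge c_0\int_B p_D(T-t,x,z)\,\D z$, and here I would invoke the lower half of the two‑sided Dirichlet heat kernel estimate of \cite{CKS,GKK,KR18} (the upper half being \eqref{EL4.4C}),
\[
p_D(s,x,z)\ \ge\ \tfrac1{C}\Big(1\wedge\tfrac{V(\delta_D(x))}{\sqrt s}\Big)\Big(1\wedge\tfrac{V(\delta_D(z))}{\sqrt s}\Big)\,p(s,|x-z|),\qquad s\in(0,T],\ x,z\in D.
\]
For $z\in B$ one has $\delta_D(z)\ge d_0$, so the second factor is bounded below; $p(T-t,|x-z|)$ is bounded below by a positive constant for $|x-z|\le\diam D$, by positivity and continuity of the free transition density at the fixed time $T-t$; and $1\wedge\frac{V(\delta_D(x))}{\sqrt{T-t}}\ge V(\delta_D(x))\big/\big(V(\diam D)\vee\sqrt{T-t}\big)$ since $V(\delta_D(x))\le V(\diam D)$. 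Integrating over $B$ gives $w(x)\ge\tfrac1{C}V(\delta_D(x))$, and together with the preceding paragraph this proves the lemma, with constants depending only on $t,T$ and $u|_{\Rd\times[t,T]}$.

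The main obstacle is precisely this last step. The terminal datum $g=u(\cdot,T)$ may decay at $\partial D$ strictly faster than $V(\delta_D)$, so for $t<T$ the rate $V(\delta_D)$ of $u(\cdot,t)$ is generated by the diffusion over $[t,T)$ rather than carried by the data; capturing it appears to genuinely require the sharp \emph{lower} Dirichlet heat kernel bound (morally a parabolic Hopf / boundary‑Harnack statement), as the $V$‑H\"older regularity of \cref{L4.4} alone only gives the upper estimate. All remaining ingredients are immediate from the representation formula and from \eqref{EL4.4C}, \eqref{ET2.1B} and \cref{L4.4}.
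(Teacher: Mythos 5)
Your proposal is correct, but it takes a genuinely different route than the paper. The paper rewrites the equation as a linear Schr\"odinger problem $(\partial_t-\Psidel)u + H(x,t)\,u=0$ with $H(x,t)=q(x,u(x,t))/u(x,t)$, which is bounded because $q(x,\cdot)$ is $\cC^1$ with $q(x,0)=0$, and then applies the Feynman--Kac representation of \cref{L4.2} to get
\[
u(x,t)=\Exp_x\Bigl[e^{\int_0^{T-t}H(X_s,t+s)\,\D s}\,u(X_{T-t},T)\Ind_{\{T-t<\uptau\}}\Bigr].
\]
Since $|H|\le C_1$ this yields, in one stroke, the two-sided comparison $e^{-C_1T}\Exp_x[u(X_{T-t},T)\Ind_{\{T-t<\uptau\}}]\le u(x,t)\le e^{C_1T}\Exp_x[u(X_{T-t},T)\Ind_{\{T-t<\uptau\}}]$, so both bounds collapse onto heat-kernel estimates for a single object: the upper one from \eqref{EL4.4C}, and the lower one from \cite[Theorem~4.5]{BGR14}, which is essentially the boundary-Harnack / lower heat-kernel input you also invoke. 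Your argument instead keeps $q(x,u)$ as a nonnegative bounded source in the additive representation of \cref{L4.1}: dropping the source gives the lower bound immediately, while for the upper bound you pay with an extra term $K\,\Exp_x[\uptau]$, which you control through $\cG 1$ and \eqref{ET2.1B}. Both are sound; the Feynman--Kac route is a bit more economical since one expectation carries both bounds, whereas yours avoids \cref{L4.2} at the cost of an additional Green-function estimate (and your upper bound on $w$ via \cref{L4.4} plus $w|_{\partial D}=0$ can also be read directly from \eqref{EL4.4C}). Two small points worth noting: the $V$-H\"older bound in \cref{L4.4} is stated for $x,y\in D$, so to conclude $w(x)\le CV(\delta_D(x))$ you should pass to the limit $y\to y_0\in\partial D$ using continuity of $w$ up to the boundary; and the precise form of the lower Dirichlet heat-kernel bound you quote may differ slightly from what \cite{CKS,GKK,KR18} literally state (e.g.\ a modified constant or truncated time inside the free kernel), but since you only evaluate it at the fixed time $T-t>0$ with $z$ ranging over a compact $B\Subset D$ and $|x-z|\le\diam D$, the conclusion $w(x)\gtrsim V(\delta_D(x))$ holds in any of these forms, exactly as in the paper. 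Your closing observation --- that the rate $V(\delta_D)$ at time $t<T$ is produced by the diffusion rather than inherited from $u(\cdot,T)$, so the sharp lower heat-kernel bound is unavoidable --- is precisely the heart of the matter.
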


\begin{proof}
Denote by 
$$H(x, t)=\frac{q(u(x, t))}{u(x, t)}.$$
Then $H$ is a bounded, continuous function.
Using \cref{L4.2} we then have
$$u(x, t) =
\Exp_x\left[e^{\int_0^{T-t} H(X_s, t + s)\, \D{s}}
u(X_{T-t}, T)\Ind_{\{T-t<\uptau\}}\right].$$
Thus, for some constant $C_1$, get
\begin{equation}\label{EL4.6A}
e^{-C_1 T} \Exp_x\left[u(X_{T-t}, T)\Ind_{\{T-t<\uptau\}}\right]
\leq u(x, t)\leq e^{C_1 T} \Exp_x\left[u(X_{T-t}, T)\Ind_{\{T-t<\uptau\}}\right].
\end{equation}
Using \eqref{EL4.4C} and \eqref{EL4.6A} we obtain
$$u(x, t)\leq C\, V(\delta_D(x)),$$
which gives the upper bound.
Now from \cite[Theorem~4.5]{BGR14} we know  that
$$p_D(t, x, y)\geq \kappa \Prob_x(\uptau>t/2)\,\Prob_y(\uptau>t/2)\, p(t\wedge V^2(r), |x-y|)$$
and 
$$ \Prob_x(\uptau>t/2)\geq \kappa \left(\frac{V(\delta_D(x))}{\sqrt{t\wedge V(r)}}\wedge 1\right),$$
where $D$ satisfies the inner and outer ball condition with radius $r$.
Now let $\cK\Subset D$ be such that $\min_{\cK}u(x, T)\geq \kappa_2>0$.
Using the lower bound in \eqref{EL4.6A} and estimates above we then find
\begin{align*}
u(x, t)&\geq e^{-C_1 T} \int_D u(y, T)\,p_D(T-t, x, y) \D{y}
\\
&\geq C_2 \kappa \Prob_x(\uptau>(T-t)/2) \int_\cK u(y, T) p((T-t)\wedge V^2(r), |x-y|) \Prob_y(\uptau>(T-t)/2) 
\D{y}
\\
&\geq C_3 \kappa^2 \kappa_2\, V(\delta_D(x))\;  p((T-t)\wedge V^2(r), \diam(D))\int_\cK \Prob_y(\uptau>(T-t)/2) dy
\\
&\geq C^{-1}\, V(\delta_D(x)),
\end{align*}
for some constants $C_2, C_3, C$.
This gives the lower bound.
Hence the proof.
\end{proof}

Now we are ready to prove \cref{T1.3}. Recall that given an interval $[0, T]$,
$u_T$ solves
\begin{equation}\label{E4.21}
\begin{split}
(\partial_t-\Psidel) u_T + a u_T - f(x, u_T) &=\, 0 \quad \text{in}\; D\times [0, T),
\\
u_T(x, T)\,=\, u_0(x)\;  \text{and}\; u_T(x, t)&=\, 0\quad \text{in}\;  D^c\times[0, T],
\end{split}
\end{equation}
where $0\lneq u_0\in\cC_0(D)$.

\begin{proof}[{\bf Proof of \cref{T1.3}}]
First consider (i). We divide the proof in two steps.

{\bf Step 1.} First we note that 
$$u_T(x, T-1)= \Exp_x\left[\Ind_{\{1<\uptau\}}
u_0(X_{1})\right] + \Exp_x\left[\int_0^{1} \Ind_{\{s<\uptau\}}  F(X_s, u_T(X_s, T-1+s)) \,\D{s}\right],$$
where $F(x, s)=as-f(x, s)$.
Thus $u_T(x, T-1)$ is independent of $T$ (by Lemma~\ref{L4.3}). In fact, it is same  as $v(x, 0)$ where $v$ solves \eqref{E4.21} in $[0, 1]$. Also,
from \eqref{EL4.2B} (taking $\delta=T-1-t$) we note that for $t\leq T-1$ we have
\begin{align*}
u_T(x, t)
&=\Exp_x\left[\Ind_{\{T-1-t<\uptau\}}
u_{T}(X_{T-1-t}, T-1)\right] + \Exp_x\left[\int_0^{T-1-t} \Ind_{\{s<\uptau\}}  F(X_s, u_T(X_s, t+s)) \D{s}\right]\,.
\end{align*}
Thus without any less of generality we may assume $u_0=u_{T}(x, T-1)$. In particular,
by \cref{L4.6}, we obtain
\begin{equation}\label{ET1.3A}
C^{-1}\, V(\delta_D(x))\leq u_0(x)\leq C\, V(\delta_D(x)), \quad \text{for}\; x\in D.
\end{equation} 

{\bf Step 2.} Let $v_a$ be the unique positive solution (see \cref{T1.1}) to 
\begin{equation}\label{ET1.3B}
-\Psidel v_a + av_a-f(x, v_a)=0\quad \text{in}\; D, \quad v_a=0\quad \text{in}\; D^c, \quad v_a>0 \quad \text{in}\; D\,.
\end{equation}
Using \eqref{ET1.3A}, \cref{T2.1,T2.2}, we choose $\kappa>1$ large enough so that
$$
\breve\varphi(x)\df\kappa^{-1} v_a(x)\leq u_0(x)\leq \kappa v_a(x)\df\hat\varphi(x),
\quad x\in D\,.
$$
Note that $\breve\varphi$ is subsolution to \eqref{ET1.3B} and $\hat\varphi$ is a supersolution to \eqref{ET1.3B}.
Setting $\hat\varphi$ as the terminal condition at time $T$ we construct a solution $\hat{w}_T$ in $[0, T]$
with $\hat{w}_T\leq \hat\varphi$. This can be done using \cref{L4.5}. Next we observe that $\hat{w}$ is 
increases with $t$. For instance, take $t_1\leq t_2\leq T$ with $T-t_2=t_2-t_1$.
Observe that $\xi(x, t)=\hat{w}_T(x, t-t_2+t_1)$ is a solution to 
\begin{equation*}
\begin{split}
(\partial_t-\Psidel) u + a u - f(x, u) &=\, 0 \quad \text{in}\; D\times [t_2, T),
\\
u(x, T)\,=\, u_T(x, t_2)\;  \text{and}\; u(x, t)&=\, 0\quad \text{in}\;  
D^c\times[t_2, T].
\end{split}
\end{equation*}
Using the uniqueness of
solutions  and comparison principle (\cref{L4.3}) we 
see that $\hat{w}_T(x, t_1)\leq \hat{w}_T(x, t_2)$. For any pair $t_1\leq t_2\leq T$
the same comparison holds due to continuity with respect to $t$ and a density
argument.
Another application of  \cref{L4.3} gives
that $u_T(x, 0)\leq \hat{w}_T(x, 0)\leq \hat\varphi(x)$. Now apply Lemma~\ref{L4.4} to invoke equi-continuity
and show that $\hat{w}_T(x, 0)\to \hat{w}$ as $T\to\infty$. Then passing limit in 
$$\hat{w}_T(x, 0)= \Exp_x\left[\Ind_{\{T<\uptau\}}
\hat\varphi(X_{T})\right] + \Exp_x\left[\int_0^{T} \Ind_{\{s<\uptau\}}  F(X_s, \hat{w}_T(X_s, s)) ds\right],$$
as $T\to\infty$, we obtain
$$\hat{w}(x)= \Exp_x\left[\int_0^{\uptau}  F(X_s, \hat{w}(X_s, s)) ds\right]
=\cG F(\cdot, \hat{w}) (x).$$
This, in particular, implies 
$$-\Psidel\hat{w} + a\hat{w}-f(\hat{w})=0\quad \text{in}\; \Omega, \quad \hat{w}=0\quad \text{in}\; \Omega^c.$$
From uniqueness we must have $\hat{w}=v_a$.

Follow a similar argument to construct a sequence of solution $\breve{w}$ (decreasing in $t$) satisfying
$$\breve\varphi\leq \breve{w}_T(x, 0)\leq u_{T}(x, 0).$$
Argument similar to above shows that
$$\lim_{T\to\infty}\sup_{D}|\breve{w}_T(x, 0)-v_a|=0.$$
Combining these two observations we complete the proof of (i).

(ii) Proof is similar to (i). For $a\leq \lambda_1$, we take $\varphi_1$ as the super-solution to \eqref{ET1.3B}. Then repeating a same argument we can conclude the proof.
\end{proof}

\subsection*{Acknowledgments}
This research of Anup Biswas was supported in part by DST-SERB grants EMR/2016/004810 and MTR/2018/000028. Mitesh Modasiya is partially supported by CSIR PhD fellowship (File no.\\
09/936(0200)/2018-EMR-I).

\end{document}